\newtheorem{theorem}{Theorem}
\newtheorem{lemma}{Lemma}
\newtheorem{proposition}{Proposition}
\newtheorem{corollary}{Corollary}
\theoremstyle{definition}
\newtheorem{definition}{Definition}
\newtheorem{example}{Example}
\theoremstyle{remark}
\newtheorem{remark}{Remark}
\numberwithin{equation}{section}
\numberwithin{theorem}{section}
\numberwithin{lemma}{section}
\numberwithin{proposition}{section}
\numberwithin{corollary}{section}
\numberwithin{definition}{section}
\numberwithin{example}{section}
\numberwithin{remark}{section}
\newcommand{\prref}[1]{Proposition \ref{#1}}
\def\tp{\otimes}                               
\def\<{\langle}
\def\>{\rangle}
\def\d{\partial}
\def\st{\; | \;}                               
\def\mmod{\;\mathrm{mod}\;}
\def\di{{\mathrm{d}}}
\newcommand{\kk}{\mathbf{k}}       
\newcommand{\NN}{\mathbb{N}}       
\newcommand{\Nset}{\mathbb{N}}       
\newcommand{\ZZ}{\mathbb{Z}}       
\newcommand{\fa}{\mathfrak{a}}
\newcommand{\fe}{\mathfrak{e}}
\newcommand{\fg}{\mathfrak{g}}
\newcommand{\m}{\mathfrak{m}}
\def\de{\delta}
\def\De{\Delta}
\def\ep{\varepsilon}
\def\om{\omega}
\def\th{\theta}
\def\g{{\mathfrak{g}}}      
\def\h{{\mathfrak{h}}}      
\def\dd{{\mathfrak{d}}}
\def\t{{\mathfrak{t}}}
\def\p{{\mathfrak{p}}}
\def\gl{{\mathfrak{gl}}}
\def\gld{\gl\,\dd}
\def\sl{{\mathfrak{sl}}}
\def\sld{\sl\,\dd}
\def\sp{{\mathfrak{sp}}}
\def\spd{\sp\,\dd}
\def\spdd{\sp(\dd, \omega)}
\def\csp{{\mathfrak{csp}}}
\def\Wd{W(\dd)}    
\def\Sd{S(\dd,\chi)}
\def\Hd{H(\dd,\chi,\om)}
\def\Hdzero{H(\dd,0,\om)}
\def\Kd{K(\dd,\th)}
\def\L{{\mathcal{L}}}
\def\I{{\mathcal{I}}}
\def\V{{\mathcal{V}}}
\def\VW{{\mathcal{V}^{\mathrm{W}}}}
\def\VS{{\mathcal{V}_\chi^{\mathrm{S}}}}
\def\VH{{\mathcal{V}_{\chi, \omega}^{\mathrm{H}}}}
\def\VK{{\mathcal{V}_\theta^{\mathrm{K}}}}
\def\VHt{{\mathcal{V}_{\chi, \omega, t, \dd'}^{\mathrm{H}}}}
\def\VHtprime{{\mathcal{V}_{\chi, \omega, t', \dd'}^{\mathrm{H}}}}
\def\VHtR{{\VHt(R)}}
\def\VHtPU{{\VHt(\Pi_+, U)}}
\def\W{{\mathcal{W}}}
\def\S{{\mathcal{S}}}
\def\H{{\mathcal{H}}}
\def\P{{\mathcal{P}}}
\def\K{{\mathcal{K}}}
\def\N{\mathcal{N}}
\def\Z{\mathcal{Z}}
\def\E{\mathcal{E}}
\def\O{\mathcal{O}}
\def\V{\mathcal{V}}           %
\def\ue{U}                 
\DeclareMathOperator{\Span}{span}
\DeclareMathOperator{\Ind}{Ind}
\DeclareMathOperator{\ad}{ad}
\DeclareMathOperator{\sd}{\ltimes}
\DeclareMathOperator{\id}{id}
\DeclareMathOperator{\fil}{F}      
\DeclareMathOperator{\Der}{Der}
\DeclareMathOperator{\Hom}{Hom}
\DeclareMathOperator{\Cur}{Cur}
\DeclareMathOperator{\sing}{sing}
\DeclareMathOperator{\im}{Im}
\begin{document}

\title[Irreducible Modules over Finite Simple Lie Pseudoalgebras IV]
{Irreducible Modules over Finite Simple Lie\\ Pseudoalgebras IV.
Non-primitive Pseudoalgebras}

\author[A.~D'Andrea]{Alessandro D'Andrea}
\address{Dipartimento di Matematica,
Istituto ``Guido Castelnuovo'',
Universit\`a di Roma ``La Sapienza'',
P.le Aldo Moro, 2,
00185 Rome, Italy}
\email{dandrea@mat.uniroma1.it}

\date\today

\begin{abstract}
Let $\dd \subset \dd'$ be finite-dimensional Lie algebras, $H = \ue(\dd), H' = \ue(\dd')$ the corresponding universal enveloping algebras endowed with the canonical commutative Hopf algebra structure. We show that if $L$ is a primitive Lie pseudoalgebra over $H$ then all finite irreducible $L' = \Cur_H^{H'} L$-modules are of the form $\Cur_H^{H'}V$, where $V$ is an irreducible $L$-module, with a single class of exceptions. Indeed, when $L\simeq \Hd$, we introduce non current $L'$-modules $\VHtR$ that are obtained by modifying the current pseudoaction with an extra term depending on an element $t \in \dd' \setminus \dd$, which must satisfy some technical conditions. This, along with results from \cite{BDK1, BDK2, BDK3}, completes the classification of finite irreducible modules of finite simple Lie pseudoalgebras over the universal enveloping algebra of a finite-dimensional Lie algebra.
\end{abstract}

\maketitle


\section{Introduction}

This is the last issue in a series of papers 
addressing the structure and representations of simple Lie pseudoalgebras over a cocommutative Hopf algebra $H = \ue(\dd)$, where $\dd$ is a finite-dimensional Lie algebra. A classification of finite irreducible modules over all primitive simple Lie pseudoalgebras \cite{BDK} has already been achieved in \cite{BDK1, BDK2, BDK3}; in this paper, I give a complete description of finite irreducible representations of non-primitive ones.

The Lie pseudoalgebra language provides a common generalization to both Lie algebras and Lie conformal algebras \cite {DK}, that are strictly related to algebraic properties of the Operator Product Expansion in vertex algebras \cite{K}.
Finite simple Lie pseudoalgebras over $H = \ue(\dd)$ have been classified in \cite{BDK}: they all arise from the so-called {\em primitive} simple Lie pseudoalgebra by means of a current construction. The list of primitive Lie $H$-pseudoalgebras, up to isomorphism, is given in Section \ref{sprimitive}. Apart from simple finite-dimensional Lie algebras, which occur when $\dd = (0)$, they all arise as subalgebras of $\Wd$, see Example \ref{wd}, and are denoted by $\Sd, \Hd, \Kd$. While $\Wd$ and $\Sd$ exist for all choices of the Lie algebra $\dd$, and non isomorphic examples of $\Sd$ are parametrized by $1$-cocycles $\chi \in \dd^*$, instances of $\Hd$, $\Kd$ depend on more elusive properties that $\dd$ must satisfy, see \cite{BDK}.

Representation theory of finite-dimensional simple Lie algebras is certainly well known. Finite irreducible representations of the Lie pseudoalgebras $\Wd$ and $\Sd$, $\Hd$, $\Kd$ have been classified in \cite{BDK1, BDK2, BDK3} respectively: the main result is that each irreducible $L$-module, where $L$ is one of the above Lie pseudoalgebras, arises as a quotient of special representations called {\em tensor modules}. These are parametrized by finite-dimensional irreducible representation of a finite-dimensional Lie algebra associated with $L$; only finitely many of the above tensor modules fail irreducibility, and then fit nicely in complexes that provide pseudoalgebraic translations of differential-geometric constructions such as the de Rham complex, along with its generalizations by Rumin \cite{Ru} and Eastwood \cite{E} in the context of contact and conformally symplectic geometry. The above results are closely connected with the study of finite height representations of Cartan type Lie algebras undertaken by Rudakov \cite{R1, R2} and Kostrikhin \cite{Ko}.

It is not difficult to show, as in Corollary \ref{currirrisirr}, that if $V$ is an irreducible representation of any Lie pseudoalgebra $L$, then the current construction yields an irreducible representation $\Cur_H^{H'}V$ of the current Lie pseudoalgebra $\Cur_H^{H'}L$. The converse does not hold in general, as one may verify by choosing $L$ to be an abelian Lie pseudoalgebra $L$. However, when $L$ is simple, the only case that has been investigated, apart from the trivial one $H = H'$, is $H = \kk, H' = \kk[\d]$ from \cite{CK}, where it is shown that all finite irreducible representations of the current Lie conformal algebra $\Cur \g = \Cur_\kk^{\kk[\d]} \g$ arise as currents of irreducible $\g$-modules. It is thus tempting to conjecture that finite irreducible modules of $\Cur_H^{H'}L$ are always current representations whenever $L$ is a simple primitive Lie pseudoalgebra $L$.

The present paper shows that this expectation holds, with a single class of exceptions: indeed, the simple Lie pseudoalgebra $\Cur_{H}^{H'} \Hd$ may have noncurrent irreducible representations, that are thoroughly classified. The main result is the following\\

\noindent{\bf Theorem
.}
{\em Let $\dd \subset \dd'$ be finite-dimensional Lie algebras, $H \subset H'$ their universal enveloping algebras endowed with the canonical cocommutative Hopf algebra structure. The following is a complete list of finite irreducible representations of the current Lie pseudoalgebra $L' = \Cur_H^{H'} L$, where $L$ is a primitive Lie pseudoalgebra:
\begin{itemize}
\item[---] $\Cur_H^{H'} V$, where $V$ is a finite irreducible $L$-module;
\item[---] $\VHtR$, where $L = \Hd$,  $R$ is a finite-dimensional irreducible representation of $\dd_+ \oplus \spdd$, and $t\in \dd' \setminus \dd$ satisfies 
\begin{itemize}
\item[{\em (i)}] $\ad_\chi t$ preserves $\dd$ and lies in $\spdd$;
\item[{\em (ii)}] $[s, t] = 0,$ where $s$ satisfies $\chi = \iota_s \omega$.
\end{itemize}
\end{itemize}}
\vspace{.3cm}
A description of all nontrivial isomorphism between the above irreducible modules is also provided.
Notice that here $\dd_+$ denotes the extension
$$ 0 \to \kk_\chi \to \dd_+ \to \dd \to 0$$
of $\dd$ by the one-dimensional abelian ideal of $1$-cocycle $\chi$ corresponding to the $2$-cocycle $\omega$ and $\ad_\chi t: \dd \to \dd'$ is defined as $(\ad_\chi t)(\d) := [t, \d] + \chi(\d) t$. Noncurrent representations $\VHtR$ are introduced in Section \ref{shtype}. They are obtained by adding an extra term $(t \otimes 1) \otimes_{H'} (1 \otimes u)$ to the expression providing the pseudoaction $e * (1 \otimes u)$ in a current tensor module $\Cur_H^{H'} \VH(R)$. 

The special behaviour of $\Hd$ depends on the presence of nontrivial central elements in the corresponding annihilation Lie algebra. This fact also plays a major role in the less standard description from \cite{BDK3} of finite irreducible representations of $\Hd$ when compared to the more unified treatment of other primitive types.

Unfortunately, many conflicting notations from \cite{BDK1, BDK2, BDK3} have to be resolved in this paper. Explanations are provided in footnotes whenever needed.

\section{Hopf algebra preliminaries}

\subsection{Hopf notation}

In this paper all vector spaces, algebras and tensor products are, unless otherwise specified, over an algebraically closed field $\kk$ of characteristic zero. We will deal with pairs $\dd \subset \dd'$ of finite dimensional Lie algebras and denote by $H$, respectively $H'$, the universal enveloping algebra $\ue(\dd)$, resp. $\ue(\dd')$. 
{No other Hopf algebras will be considered} with the exception of such universal enveloping algebras.

Notice that both $H$ and $H'$ are  Hopf algebras with respect to the coproduct 
$\De$, antipode $S$, and counit $\ep$ given by:
\begin{equation}\label{des}
\De(\d) = \d\tp1 +1\tp\d \,, \quad S(\d)=-\d \,, \quad \ep(\d)=0 \,,
\qquad \d\in\dd' \,.
\end{equation}
More precisely, $H\subset H'$ is a Hopf subalgebra, so that the inclusion $\iota: H \to H'$ is a Hopf-algebra homomorphism.
We will employ the notation:
\begin{align}
\label{de1}
\De(h) &= h_{(1)} \tp h_{(2)} = h_{(2)} \tp h_{(1)} \,, 
\\
\label{de2}
(\De\tp\id)\De(h) &= (\id\tp\De)\De(h) = h_{(1)} \tp h_{(2)} \tp h_{(3)} \,,
\\
\label{de3}
(S\tp\id)\De(h) &= h_{(-1)} \tp h_{(2)} \,, \qquad\quad h\in H \,.
\end{align}
Then the antipode and counit axioms can be written
as follows:
\begin{align}
\label{antip}
h_{(-1)} h_{(2)} &= h_{(1)} h_{(-2)} = \ep(h),
\\
\label{cou}
\ep(h_{(1)}) h_{(2)} &= h_{(1)} \ep(h_{(2)}) = h,
\end{align}
while the fact that $\De$ is a homomorphism of algebras
translates as:
\begin{equation}
\label{deprod}
(fg)_{(1)} \tp (fg)_{(2)} = f_{(1)} g_{(1)} \tp f_{(2)} g_{(2)},
\qquad f,g\in H.
\end{equation}
Eqs.\ \eqref{antip}, \eqref{cou} imply the following
useful relations:
\begin{equation}
\label{cou2}
h_{(-1)} h_{(2)} \tp h_{(3)} = 1\tp h
= h_{(1)} h_{(-2)} \tp h_{(3)}.
\end{equation}
%

%

Set $\dim \dd = N,\, \dim \dd' = N + r$. If $\{\d_1, \dots,
\d_{N+r}\}$ is a basis of $\dd'$, we denote by $\{x^1,\dots,x^{N+r}\}$ the corresponding dual basis of $\dd'^*$.
The structure constant $c_{ij}^k$ of $\dd'$, are defined by 
\begin{equation}\label{cijk}
[\d_i,\d_j]=\sum_{k=1}^{N+r} c_{ij}^k\d_k
\,, \qquad i,j=1,\dots,N+r \,.
\end{equation}
We have a corresponding (reduced) Poincaré-Birkhoff-Witt basis
\begin{equation}\label{dpbw}
\d^{(K)} = \d_1^{k_1} \dotsm \d_{N+r}^{k_{N+r}} / k_1! \dotsm
k_{N+r}! \,, \qquad K = (k_1,\dots,k_{N+r}) \in\ZZ_+^{N+r} \,
\end{equation}
of $H'$.
\begin{remark}
PBW bases may be used to show that $H'$ is free both as a left and as a right $H$-module. The embedding $\iota: H \hookrightarrow H'$ is thus a pure homomorphism.
\end{remark}

\subsection{Straightening}\label{straight}

Let $M$ be a left $H$-module. The coproduct $\Delta: H \to H \otimes H$ makes $H \otimes H$ into a right $H$-module, so that it makes sense to consider the tensor product $(H \otimes H) \otimes_H M$, which is a left $H \otimes H$-module by left multiplication on the first $\otimes_H$ factor. Elements lying in $(H \otimes H) \otimes_H M$ may be expressed in very many ways, and this can make it difficult to verify whether two distinct expressions actually describe the same quantity. This problem is solved by the left- and right-straightening techniques, introduced in \cite{BDK}, that we recall here briefly.

\begin{proposition}\label{straightprop}
Let $M$, resp. $N$, be a right, resp. left, $H$-module. Then the assignments
\begin{align}
M \otimes N \ni m \otimes n & \mapsto (m \otimes 1) \otimes_H n \in (M \otimes H) \otimes_H N\label{left2}\\
M \otimes N \ni m\otimes n & \mapsto (1 \otimes m) \otimes_H n \in (H \otimes M) \otimes_H N\label{right2}
\end{align}
extend to linear isomorphisms. 
\end{proposition}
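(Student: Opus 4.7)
The plan is to build explicit two-sided inverses to both maps using the antipode of $H$, exploiting a standard Hopf-module straightening trick. For \eqref{left2} the inverse I take is
\begin{equation*}
\phi_1\bigl((m \otimes h) \otimes_H n\bigr) := m h_{(-1)} \otimes h_{(2)} n,
\end{equation*}
and analogously for \eqref{right2} with $(h \otimes m)$ in place of $(m \otimes h)$. The forward maps are clearly $\kk$-linear by the universal property of $\otimes$, so all the work is in showing that the above assignments descend to linear maps out of $\otimes_H$ and are genuine inverses.

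To see $\phi_1$ is well-defined, I verify that the underlying $\kk$-bilinear map on $(M \otimes H) \times N$ is $H$-balanced. Evaluating it on $((m \otimes h) \cdot k, n) = (m k_{(1)} \otimes h k_{(2)}, n)$ and expanding $\Delta(h k_{(2)})$ via \eqref{deprod} yields $m k_{(1)} S(k_{(2)}) S(h_{(1)}) \otimes h_{(2)} k_{(3)} n$; the second form of \eqref{cou2}, namely $k_{(1)} k_{(-2)} \otimes k_{(3)} = 1 \otimes k$, applied to the $k$-factors alone, collapses this to $m S(h_{(1)}) \otimes h_{(2)} k n$, which matches the evaluation on $(m \otimes h, k n)$. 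One composition with the forward map is immediate; the other reduces to the claim $(m S(h_{(1)}) \otimes 1) \cdot h_{(2)} = m \otimes h$ in $M \otimes H$, which follows from the companion identity $h_{(-1)} h_{(2)} \otimes h_{(3)} = 1 \otimes h$ of \eqref{cou2} together with the counit axiom \eqref{cou}.

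The argument for \eqref{right2} is parallel, and the only genuinely new point is that the simplifying identity needed in the balancedness check is $k_{(3)} S(k_{(1)}) \otimes k_{(2)} = 1 \otimes k$ rather than one of those tabulated in \eqref{cou2}. I obtain it by permuting Sweedler factors of $\Delta^2(k)$ using cocommutativity of $H = \ue(\dd)$: the cyclic reshuffle $k_{(3)} \otimes k_{(1)} \otimes k_{(2)} = k_{(1)} \otimes k_{(2)} \otimes k_{(3)}$ reduces the new identity to an instance of \eqref{cou2}. This is the single spot where cocommutativity is really used --- in a non-cocommutative Hopf algebra the right-hand straightening would have to be written with the inverse antipode $S^{-1}$ in place of $S$ --- and the careful bookkeeping of Sweedler indices around it is the main, if mild, technical point to watch.
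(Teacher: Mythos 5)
Your proof is correct and follows essentially the same route as the paper's: the paper simply exhibits the inverses $(m\otimes h)\otimes_H n\mapsto mh_{(-1)}\otimes h_{(2)}n$ and $(h\otimes m)\otimes_H n\mapsto mh_{(-2)}\otimes h_{(1)}n$ and leaves the verification to the reader, which is exactly the check you carry out. The only cosmetic difference is that for \eqref{right2} you use $mh_{(-1)}\otimes h_{(2)}n$ instead of the paper's $mh_{(-2)}\otimes h_{(1)}n$; these agree by cocommutativity of $H=\ue(\dd)$, which is also why your Sweedler-index reshuffle goes through.
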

\begin{proof}
One checks easily that the maps
\begin{align}
(M \otimes H) \otimes_H N \ni (m \otimes h) \otimes_H n & \mapsto mh_{(-1)} \otimes h_{(2)}n\\
(H \otimes M) \otimes_H N \ni (h \otimes m) \otimes_H n & \mapsto mh_{(-2)} \otimes h_{(1)}n
\end{align}
extend to explicit inverses to \eqref{left2}, \eqref{right2}.
\end{proof}

\begin{corollary}\label{pretechnical}
Let $N$ be a left $H$-module. 
Every element $\alpha \in (H \otimes H) \otimes_H N$ can be expressed either in the form
\begin{equation}\label{left}
\alpha = \sum_i (h_i \otimes 1) \otimes_H m^i, \quad h_i \in H, m^i \in N
\end{equation}
or in the form
\begin{equation}\label{right}
\alpha = \sum_i (1 \otimes k_i) \otimes_H n^i, \quad k_i \in H, n^i \in N.
\end{equation}
Similarly, every element in $(H \otimes H \otimes H) \otimes_H N$ has a unique representative both in $(H \otimes H \otimes \kk) \otimes_H N$ and in $(\kk \otimes H \otimes H) \otimes_H N$.
\end{corollary}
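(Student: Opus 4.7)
The plan is to deduce both parts of Corollary \ref{pretechnical} as immediate specializations of Proposition \ref{straightprop}, by plugging in the right choice of auxiliary module $M$. The proposition has already done all the real work of exhibiting the left- and right-straightening isomorphisms together with their explicit inverses, so what remains is only to identify, in each case, the correct module to feed into it and to read off the consequence.

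For the first assertion, I would take $M = H$ (as a right $H$-module by right multiplication) and $N$ the given left $H$-module. The assignment \eqref{left2} then specializes to a linear isomorphism $H \otimes N \isoto (H \otimes H) \otimes_H N$ sending $h \otimes n$ to $(h \otimes 1) \otimes_H n$, showing that every $\alpha \in (H \otimes H) \otimes_H N$ admits an expression of the form \eqref{left}; under the same choice of $M$, the assignment \eqref{right2} specializes to $h \otimes n \mapsto (1 \otimes h) \otimes_H n$, producing an expression as in \eqref{right}. Because both assignments are bijections, the representing data $\{h_i, m^i\}$ in \eqref{left} (and $\{k_i, n^i\}$ in \eqref{right}) are in fact uniquely determined, which is what one actually needs when applying the corollary later in the paper.

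For the second assertion I would simply iterate the same construction, this time with $M = H \otimes H$ equipped with the right $H$-action given by right multiplication on the outer tensorand. The assignment \eqref{left2} then specializes to a linear isomorphism $(H \otimes H) \otimes N \isoto (H \otimes H \otimes H) \otimes_H N$ whose image is precisely $(H \otimes H \otimes \kk) \otimes_H N$, and \eqref{right2} gives the analogous isomorphism with image $(\kk \otimes H \otimes H) \otimes_H N$. Uniqueness of each representative is again automatic from bijectivity. The one place where one has to pause, and the only plausible obstacle, is to check that the right $H$-module structure on $H \otimes H$ implicit in the outer $\otimes_H N$ coincides with the structure for which Proposition \ref{straightprop} is being invoked; but this is immediate because in every tensor product above the relevant $H$-action is the one on the tensorand sitting adjacent to $\otimes_H$, so no compatibility check beyond this identification is required.
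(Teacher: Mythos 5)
Your proof follows exactly the route the paper takes: the paper's entire proof of Corollary \ref{pretechnical} reads ``Use $M = H$, resp.\ $H \otimes H$, in Proposition \ref{straightprop}'', and your treatment of the first assertion (with $M = H$) matches it, including the correct remark that bijectivity of \eqref{left2} and \eqref{right2} yields uniqueness of the representatives.

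The one point that needs correcting is the right $H$-module structure you put on $M = H \otimes H$ for the second assertion. It is not ``right multiplication on the outer tensorand'': throughout the paper, $(H \otimes H) \otimes_H N$ and $(H \otimes H \otimes H) \otimes_H N$ are formed using the right $H$-action coming from the (iterated) coproduct, i.e.\ $(f \otimes g)\cdot h = f h_{(1)} \otimes g h_{(2)}$, and this is also the action for which the explicit inverse $(m \otimes h) \otimes_H n \mapsto m h_{(-1)} \otimes h_{(2)} n$ in Proposition \ref{straightprop} is well defined. Taking $M = H \otimes H$ with this diagonal action, coassociativity gives $\bigl((f\otimes g)\otimes k\bigr)\cdot h = f h_{(1)} \otimes g h_{(2)} \otimes k h_{(3)}$ on $M \otimes H$, which is precisely the structure underlying $(H \otimes H \otimes H) \otimes_H N$, and the proposition then applies verbatim. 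With the action you describe, $(M \otimes H) \otimes_H N$ would be a different (merely isomorphic) right $H$-module quotient, so your conclusion would not literally concern the space in the statement. The fix is one line, but your closing justification --- that ``the relevant $H$-action is the one on the tensorand sitting adjacent to $\otimes_H$'' --- should be replaced by the observation that the action is diagonal on all tensor factors.
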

\begin{proof}
Use $M = H$, resp. $H \otimes H$, in Proposition \ref{straightprop}.
\end{proof}

\begin{corollary}\label{technical}
Let $N \subset M$ be left $H$-modules, and
$$\alpha = \sum_{i} (h_i \otimes 1) \otimes_H m^i \in (H \otimes H) \otimes_H M = \sum_i (1 \otimes k_i) \otimes_H n^i,$$
where $h_i$, resp. $k_i$, are linearly independent elements in $H$. Then $\alpha$ lies in $(H \otimes H) \otimes_H N$ if and only if $m^i \in N$, or equivalently $n^i \in N$, for every $i$. 
\end{corollary}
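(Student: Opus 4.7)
The plan is to pull $\alpha$ back through the linear isomorphisms supplied by Proposition \ref{straightprop} and reduce the question to an elementary property of ordinary tensor products of vector spaces.

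First, I would specialize Proposition \ref{straightprop} by taking the first tensor factor to be $H$ itself, obtaining, for every left $H$-module $P$, linear isomorphisms
\begin{align*}
\Phi^L_P &: H \otimes P \isoto (H \otimes H) \otimes_H P, \quad h \otimes p \mapsto (h \otimes 1) \otimes_H p, \\
\Phi^R_P &: H \otimes P \isoto (H \otimes H) \otimes_H P, \quad k \otimes p \mapsto (1 \otimes k) \otimes_H p,
\end{align*}
whose explicit inverses are read off the proof of the proposition. Both constructions are manifestly natural in $P$, so applying them to the inclusion $N \hookrightarrow M$ yields a commutative square
$$
\begin{array}{ccc}
H \otimes N & \hookrightarrow & H \otimes M \\
\Phi^L_N\;\downarrow\;\wr & & \wr\;\downarrow\;\Phi^L_M \\
(H \otimes H) \otimes_H N & \longrightarrow & (H \otimes H) \otimes_H M,
\end{array}
$$
and an analogous one with $\Phi^R$ in place of $\Phi^L$. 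In particular, the bottom arrow is injective, and an element of $(H \otimes H) \otimes_H M$ lies in the image of $(H \otimes H) \otimes_H N$ precisely when its left (equivalently, right) straightened form lies in $H \otimes N$.

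Translating the two given presentations of $\alpha$ by means of these isomorphisms gives $\alpha = \Phi^L_M\bigl(\sum_i h_i \otimes m^i\bigr) = \Phi^R_M\bigl(\sum_i k_i \otimes n^i\bigr)$, so membership of $\alpha$ in $(H \otimes H) \otimes_H N$ is equivalent to $\sum_i h_i \otimes m^i \in H \otimes N$, and equivalent to $\sum_i k_i \otimes n^i \in H \otimes N$. Since the $h_i$ (resp.\ the $k_i$) are linearly independent in $H$, I would complete them to a $\kk$-basis of $H$; the resulting direct-sum decomposition of $H \otimes M$ immediately forces each $m^i$ (resp.\ each $n^i$) to belong to $N$, while the converse is obvious. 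No step presents a genuine obstacle: the only item requiring a moment's care is the naturality in $P$ of the straightening isomorphisms, which is essentially built into their construction in Proposition \ref{straightprop}.
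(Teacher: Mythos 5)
Your argument is correct and is exactly the route the paper intends: Corollary \ref{technical} is stated without a separate proof precisely because it is the naturality (in the left module) of the straightening isomorphisms of Proposition \ref{straightprop}, combined with the elementary fact that $\sum_i h_i \otimes m^i \in H \otimes N$ with the $h_i$ linearly independent forces each $m^i \in N$. This is also how the paper itself uses the statement later (see Remark \ref{currentdoesnothing}), so nothing further is needed.
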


\begin{remark}\label{currentdoesnothing}
The above corollary shows that there always exists a smallest $N \subset M$ such that $\alpha \in (H \otimes H) \otimes_H N$, which may be computed by taking the left- or right-straightened expression for $\alpha$, and considering the $H$-linear span of all elements on the right of $\otimes_H$.

Corollary \ref{technical} also allows one to check equalities in $(H \otimes H) \otimes_H N$ and $(H \otimes H \otimes H) \otimes_H N$. It is enough to straighten everything, say to the left, and then verify the equality in the vector spaces $H \otimes N$, $H \otimes H \otimes N$ respectively, which is a trivial job.
\end{remark}

We end this section with a technical statement which can be thought of as a partial straightening.
We split the set $\NN^{n}$ of PBW indices for $H = \ue(\dd)$ as follows: elements of $R: = \NN^k \times \{0\}\subset \NN^{n}$ are supported on the first $k$ indices, whereas those lying in $S: = \{0\} \times \NN^{n-k}$ vanish on the first $k$ indices, so that they are supported on the subsequent $n-k$ ones. Clearly, if $K_i \in R, L_i \in S, i = 1, 2,$ then $K_1 + L_1 = K_2 + L_2$ if and only if $K_1 = K_2, L_1 = L_2$. Moreover, the sum of two elements in $\NN^{n}$ lies in $R$ (resp. $S$) if and only if both summands lie in $R$ (resp. $S$).

\begin{lemma}\label{RS}
Let $U \subset V$ be left $H$-modules. Then the element
\begin{equation}\label{alpha}
\alpha = \sum_{K \in R, L \in S} (\d^{(K)} \otimes \d^{(L)}) \otimes_{H} v_{K+L} \in (H \otimes H) \otimes_{H} V
\end{equation}
lies in $(H \otimes H) \otimes_{H} U$ precisely when all $v_{J}, J \in \NN^n,$ lie in $U$.
\end{lemma}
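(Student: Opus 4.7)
The implication $(\Leftarrow)$ is immediate. For the converse, Proposition~\ref{straightprop} identifies the functor $(H \otimes H) \otimes_H (-)$ with $H \otimes (-)$, so $H \otimes H$ is flat as a right $H$-module; passing from $V$ to $V/U$ therefore reduces matters to proving that if $\alpha = 0$ in $(H \otimes H) \otimes_H V$ then every $v_J = 0$.

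My plan is to express $\alpha$ in right-straightened form as an element $\tilde\alpha \in H \otimes V$ and exploit the PBW filtration. A direct expansion of $\Delta(\d_i) = \d_i \otimes 1 + 1 \otimes \d_i$ inside $H \otimes H$ yields the identity $\Delta(\d^{(K)}) = \sum_{K_1+K_2=K} \d^{(K_1)} \otimes \d^{(K_2)}$ for every $K \in \NN^n$. Substituting this into the inverse right-straightening $(f \otimes g) \otimes_H v \mapsto g S(f_{(2)}) \otimes f_{(1)} v$ and reindexing by $M = K + L$ (so $M_R = K$, $M_S = L$) gives
$$\tilde\alpha = \sum_{M \in \NN^n}\ \sum_{K_1 \leq M_R} \d^{(M_S)} S(\d^{(M_R - K_1)}) \otimes \d^{(K_1)}\, v_M.$$

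The crux is a top-PBW-degree count. Filter $H$ by $H^{\leq d} = \Span\{\d^{(N)} : |N| \leq d\}$, so that $\{\bar\d^{(N)} : |N| = d\}$ is a basis of $\gr^d(H)$. Two elementary facts---that $S(\d^{(K_2)}) \equiv (-1)^{|K_2|} \d^{(K_2)}$ and $\d^{(M_S)} \d^{(K_2)} \equiv \d^{(K_2 + M_S)}$ modulo strictly lower PBW degree, each obtained by observing that reordering a non-PBW monomial back to PBW introduces only commutator corrections of lower total degree---imply that $\d^{(M_S)} S(\d^{(M_R - K_1)})$ has top PBW part $(-1)^{|M_R - K_1|} \d^{(M - K_1)}$. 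Consequently the summand indexed by $(M, K_1)$ lies in $H^{\leq |M| - |K_1|} \otimes V$, and attains the maximal degree $|M|$ only when $K_1 = 0$.

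Suppose for contradiction that $\tilde\alpha = 0$ yet some $v_J \neq 0$, and set $d := \max\{|M| : v_M \neq 0\}$. Then the only summands contributing to $\gr^d(H) \otimes V$ are those with $|M| = d$ and $K_1 = 0$; they produce $\sum_{|M| = d} (-1)^{|M_R|} \bar\d^{(M)} \otimes v_M$, which is nonzero by the linear independence of $\{\bar\d^{(M)} : |M| = d\}$. This contradicts $\tilde\alpha = 0$, so every $v_M$ must vanish. The only obstacle I foresee is clerical: one must verify that the non-commutative PBW expansions of $S(\d^{(M_R - K_1)})$ and of $\d^{(M_S)} S(\d^{(M_R - K_1)})$ contribute no hidden terms at top grade, so that the leading PBW part of $\tilde\alpha$ cleanly diagonalizes the dependence on the $v_M$.
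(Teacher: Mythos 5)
Your argument is correct, but it takes a genuinely different route from the paper's. The paper left-straightens $\alpha$ (keeping $U$ in play throughout), invokes Corollary \ref{technical} to conclude that each left-straightened coefficient $\sum_{L_2}\pm\,\d^{(L_2)}v_{J+L_2}$ lies in $U$, and then runs a downward \emph{lexicographic} induction: for the lexicographically maximal $J$ with $v_J\notin U$, all the correction terms $\d^{(L_2)}v_{J+L_2}$, $L_2\neq 0$, already lie in the $H$-submodule $U$, forcing $v_J\in U$. You instead first quotient by $U$ (legitimate, since Proposition \ref{straightprop} makes $(H\otimes H)\otimes_H(-)$ naturally isomorphic to $H\otimes(-)$, hence exact), right-straighten, and then argue by maximal \emph{total PBW degree}, reading off the leading term in $\gr H\simeq \symm(\dd)$, where the contributions with $K_1=0$ diagonalize over the basis $\{\overline{\d^{(M)}}:|M|=d\}$. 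The two extremal principles are different, and so is the way the non-commutativity is controlled: your passage to the associated graded algebra explicitly absorbs the corrections coming from $S(\d^{(L)})$ being only $(-1)^{|L|}\d^{(L)}$ modulo lower filtration, and from the product $\d^{(M_S)}\d^{(K_2)}$ being out of PBW order --- a point the paper's displayed computation writes as an exact identity and which its lexicographic induction handles instead by keeping all corrections inside the $H$-submodule $U$. The paper's version is shorter once Corollary \ref{technical} is available and works directly with the submodule; yours is more self-contained on the non-commutative bookkeeping, reduces the problem to a purely linear-algebraic vanishing statement, and makes transparent why the conclusion is insensitive to the choice of ordered basis (cf.\ Remark \ref{RSremark}).
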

\begin{proof}
Clearly, if all $v_{K, L}$ lie in $U$, then $\alpha \in (H \otimes H) \otimes_{H} U$. As for the converse, assume $\alpha \in (H \otimes H) \otimes_{H} U$ and left-straighten \eqref{alpha} to obtain
\begin{align}
\alpha & = \sum_{K \in R}\sum_{L_1, L_2 \in S} (\d^{(K)}S(\d^{(L_1)}) \otimes 1) \otimes_{H} \d^{(L_2)} v_{K+L_1 + L_2} \\
& = \sum_{K \in R}\sum_{L_1, L_2 \in S} (\d^{(K+L_1)}) \otimes 1) \otimes_{H} (-1)^{|L_1|}\d^{(L_2)} v_{K+L_1 + L_2}\\
& = \sum_{J \in \ZZ^{n}} (\d^{(J)} \otimes 1) \otimes_{H} \sum_{L_2 \in S} \pm\, \d^{(L_2)} v_{J + L_2} \in (H \otimes H) \otimes_{H} U,
\end{align}
where we set $J = K+L_1$. Then Corollary \ref{technical} shows that
\begin{equation}\label{strangesum}
\sum_{L_2 \in S} \pm\, \d^{(L_2)} v_{J + L_2}
\end{equation}
lies in $U$ for each choice of $J \in \ZZ^{n}, L \in S$.

Now, proceed by contradiction and assume that $J\in \ZZ^{n}$ is (lexicographically)  maximal such that $v_{J} \notin U$. Then \eqref{strangesum} rewrites as the sum
$$\pm v_{J} + \sum_{0 \neq L_2 \in S} \pm \d^{(L_2)} v_{J + L_2} \in U.$$
However, the summation on the right lies in the $H$-submodule $U\subset V$ by maximality of $J$, whence $v_{J} \in U$, yielding a contradiction.
\end{proof}

\begin{remark}\label{RSremark}
It is important to stress here that the above Lemma \ref{RS} may be applied to any totally ordered basis of a Lie algebra. In particular, it may be applied to any basis of $\dd'$, and not just those where the last elements constitute a basis of $\dd\subset \dd'$ as we will consider below; even in this case, $k$ does not need to equal the difference $\dim \dd' - \dim \dd$ but may be any number.
\end{remark}

\begin{corollary}\label{RSuniqueness}
Let $V$ be a left $H$-module. Then equality
$$\sum_{K \in R, L \in S} (\d^{(K)} \otimes \d^{(L)}) \otimes_{H} u_{K+L} = \sum_{K \in R, L \in S} (\d^{(K)} \otimes \d^{(L)}) \otimes_{H} v_{K+L}$$
holds in $(H \otimes H) \otimes_H V$ if and only if $u_N = v_N$ for every $N \in \NN^{n}$.
\end{corollary}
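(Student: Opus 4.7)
The statement is essentially a direct corollary of Lemma \ref{RS} applied to the trivial submodule, so the plan is short.

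The $(\Leftarrow)$ direction is trivial: if $u_N = v_N$ for every $N \in \NN^n$, the two sums are literally the same expression in $(H \otimes H) \otimes_H V$, so certainly equal.

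For $(\Rightarrow)$, the plan is to subtract the two sides and apply Lemma \ref{RS} with $U = (0) \subset V$. Setting $w_N := u_N - v_N$, the assumed equality rewrites as
\begin{equation*}
\sum_{K \in R,\, L \in S} (\d^{(K)} \otimes \d^{(L)}) \otimes_H w_{K+L} = 0 \in (H \otimes H) \otimes_H V.
\end{equation*}
This is legitimate because, as noted in the paragraph preceding Lemma \ref{RS}, the decomposition $N = K + L$ with $K \in R$ and $L \in S$ exists and is unique, so the indexing by pairs $(K,L) \in R \times S$ and the indexing by $N \in \NN^n$ agree. Since $0$ trivially lies in $(H \otimes H) \otimes_H (0)$ and $(0)$ is a left $H$-submodule of $V$, Lemma \ref{RS} forces $w_J \in (0)$, i.e.\ $w_J = 0$, for every $J \in \NN^n$. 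This is exactly the desired conclusion $u_J = v_J$.

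There is really no obstacle here; the whole point is that the lemma was stated in a form strong enough to detect whether coefficients vanish, and this corollary just repackages that detection as a uniqueness-of-expansion statement. If one wanted to avoid invoking the lemma, one would instead perform the same left-straightening computation done in its proof and appeal to Corollary \ref{technical}, but that would just reproduce the argument verbatim.
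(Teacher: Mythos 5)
Your proposal is correct and is exactly the paper's proof: the paper's argument is the one-liner ``Use $U=0$ in Lemma \ref{RS},'' which is precisely your plan of subtracting the two expressions and applying the lemma to the zero submodule. You have merely spelled out the (trivial) converse direction and the linearity/reindexing details that the paper leaves implicit.
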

\begin{proof}
Use $U = 0$ in Lemma \ref{RS}.
\end{proof}

\subsection{Filtrations}

The PBW basis may be used to set up a canonical increasing filtration of $H'$ given by
\begin{equation}\label{filued1}
\fil^p H' = \Span_\kk\{ \d^{(K)} \st |K| \le p \} \,, \qquad
\text{where} \quad |K|=k_1+\cdots+k_{N+r} \,.
\end{equation}
This filtration does not depend on the choice of basis of $\dd'$, and 
is compatible with the Hopf algebra structure of $H$
. We have: $\fil^{-1} H' =
\{0\}$, $\fil^0 H' = \kk$, and $\fil^1 H' = \kk\oplus\dd'$.

The dual $X':= \Hom_\kk(H',\kk)$ is a commutative associative algebra and inherits an induced decreasing filtration $\fil_i X' = (\fil^i H')^\perp$ making it into a linearly compact vector space.
We will identify $\dd'^*$ as a subspace of $X'$ by letting 
$\langle x^i, \d_i \rangle = 1$ and $\langle x^i, \d^{(I)} \rangle = 0$
for all other basis vectors \eqref{dpbw}.
Mapping $x^i \mapsto t^i$ gives rise to an isomorphism from $X$ 
to the algebra $\kk[[t^1,\dots,t^{N+r}]]$ 
of formal power series in $N+r$ indeterminates. Notice that the above filtration satisfies $\fil_{-1}X' = X'$, whereas $\fil_{0}X'$ is the (maximal) ideal generated by $x^1, \dots, x^{N+r}$. Similarly, $\fil_{p}X'$ coincides with $(\fil_{0} X')^{p+1}$.

\begin{remark}\label{donotadd}
The choice of indices in the filtration for $X'$ is natural, but somewhat clumsy, and one has $(\fil_{p} X')(\fil_{q} X') = \fil_{p+q+1} X'$ so that indices do not add up properly. This will later have consequences in choosing the right indexing of the filtration of annihilation algebras.
\end{remark}

\begin{remark}
If $H = \ue(\dd)$, then $H \otimes H = \ue(\dd) \otimes \ue(\dd) \simeq \ue(\dd \oplus \dd)$. We will occasionally denote by $\fil^p (H \otimes H)$ the corresponding filtration, which satisfies $\fil^p (H \otimes H) = \sum_{i+j = p} \fil^i H \otimes \fil^j H$.
\end{remark}

The Lie algebra $\dd'$ has left and right actions on $X'$ by derivations, given by
\begin{align}
\label{dx1}
\langle \d x, h\rangle &= -\langle x, \d h\rangle \,,
\\
\label{dx2}
\langle x \d, h\rangle &= -\langle x, h\d\rangle \,,
\qquad \d\in\dd' \,, \; x \in X' \,, \; h \in H' \,,
\end{align}
where $\d h$ and $h\d$ are the products in $H'$. These two actions
coincide only when $\dd'$ is abelian, the difference
$\d x - x \d$ giving the coadjoint action of $\d\in\dd'$ on $x\in X'$:
$$\langle \d x - x \d, h\rangle = -\langle x, [\d, h]\rangle.$$

\subsection{Splitting a projection}

Throughout the rest of the paper, we will chose a basis  $\{\d_1, \dots,
\d_{N+r}\}$ of $\dd'$ so that the last $N$ elements $\d_{r+1}, \dots, \d_{N+r}$ are a basis of $\dd\subset \dd'$. Then the PBW basis of $H$ gets identified to a subset of the PBW basis of $H'$; correspondingly, the subalgebra $\kk[[x^{r+1}, \dots, x^{N+r}]] \subset X'$ may be identified with $X$. Notice that the inclusion $\iota: H \to H'$ induces a surjective commutative algebra homomorphism $\iota^*: X' \to X$ whose kernel we denote by $I = \ker \iota^*$, and the above identification provides a splitting $\varsigma: X \to X'$ to $\iota^*$, so that $X' = X + I$ is a direct sum decomposition (as vector spaces). More specifically, $\iota^*$ is the unique homomorphism mapping $x^1, \dots, x^r$ to $0$ and fixing each $x^{r+1}, \dots, x^{N+r}$; thus, $I\subset X'$ is the ideal generated by $x^1, \dots, x^r$.

\begin{remark}
Denote by $H_+$ the augmentation ideal of $H$. It is easy to show that the subalgebra $\kk[[x^1, \dots, x^r]]\subset X'$ equals $(H' H_+)^\perp$ and is thus canonically determined, and independent of the choice of the basis of $\dd'$. \end{remark}
Henceforth, we will denote this canonical subalgebra of $X'$ by $\O$, and its unique maximal ideal by $\m=\langle x^1, \dots, x^r\rangle$. Notice that one has an isomorphism $X' \simeq \O \,\widehat{\otimes}\, X$ of linearly compact vector spaces, which identifies $I$ with $\m \,\widehat{\otimes}\, X$.\\

It is important to highlight the fact that we are provided with two distinct pairs of left and right actions of $\dd$ on $X$: one is as above in the case where $r = 0$; the other is obtained by restricting to the subalgebra $\dd$ the action of $\dd'$ on $X \simeq \varsigma(X) \subset X'$. In general, these actions will differ, but they are still nicely compatible. 

\begin{proposition}\label{dactonxprime}
\begin{enumerate}
\item[{\em (i)}] The right action of $\dd \subset \dd'$ on $X'$ preserves $X \subset X'$ and coincides with the natural right action of $\dd$ on $X$. Moreover, the right action of $\dd$ on $x^1, \dots, x^r$ is trivial.
\item[{\em (ii)}] The right action of $\dd$ on $X'$ is $\O$-linear.
\item[{\em (iii)}] The restriction to $X \subset X'$ of the left action of $\dd \subset \dd'$ on $X'$ coincides, up to elements in $I$, with the natural left action of $\dd$ on $X$. 
\item[{\em (iv)}] $I$ is stabilized by both the left- and right- action of $\dd \subset \dd'$ on $X'$.
\end{enumerate}
\end{proposition}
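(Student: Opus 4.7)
I plan to exploit the PBW decomposition $H' = H \oplus H_c$, where $H_c$ denotes the span of those ordered PBW monomials $\d^{(K)}$ with at least one positive exponent among $k_1,\dots,k_r$. Dualising yields $X' = X \oplus I$ with $X = H_c^{\perp}$ and $I = H^{\perp}$, which coincides with the decomposition into $\varsigma(X)$ and $I$ introduced earlier.

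The first step is to verify that $H_c$ is a right $H$-submodule of $H'$. Writing any such monomial as $\d^{(K')}\d^{(K'')}$ with $K' \ne 0$ supported on $\{1,\dots,r\}$ and $K''$ supported on $\{r+1,\dots,N+r\}$, the factor $\d^{(K'')}\d$ lies in $H$ for $\d \in \dd$. Expanding it in the PBW basis of $H$ as $\sum_M c_M\d^{(M)}$ (all $M$ supported on $\{r+1,\dots,N+r\}$), one gets $\d^{(K')}\cdot\d^{(K'')}\d = \sum_M c_M \d^{(K'+M)} \in H_c$. Dually, both $X$ and $I$ are preserved by the right action of $\dd$ on $X'$; this yields the stability part of (i) and the right-action half of (iv). Moreover, for $x \in X$, $h \in H$, the identity $(x\d)(h) = -x(h\d)$ reproduces verbatim the natural right action of $\dd$ on $X \simeq H^*$.

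The main obstacle is the second clause of (i), that $x^i\d = 0$ exactly (not merely $\in I$) for $i \le r$ and $\d \in \dd$. I would compute $\langle x^i\d,\d^{(K)}\rangle = -\langle x^i,\d^{(K)}\d\rangle$ using the factorization $\d^{(K)} = \d^{(K')}\d^{(K'')}$ and the expansion $\d^{(K'')}\d = \sum_M c_M\d^{(M)}$ in $H$, obtaining $\d^{(K)}\d = \sum_M c_M\d^{(K'+M)}$. Pairing with $x^i$ extracts the index $K'+M = e_i$, and the disjoint supports of $K'$ and $M$ force $K' = e_i$, $M = 0$; the coefficient $c_0$ is then the counit value $\ep(\d^{(K'')}\d) = \ep(\d^{(K'')})\ep(\d) = 0$, since $\ep$ vanishes on $\dd$. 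This counit identity is the one spot where a genuine computation is required.

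The remaining items follow formally. For (ii), the right action of $\d \in \dd$ is a continuous derivation of $X'$ that kills the topological generators $x^1,\dots,x^r$ of $\O$, hence kills all of $\O$, which by the Leibniz rule is equivalent to $\O$-linearity. For (iii), if $x = \varsigma(x_0) \in X$ with $x_0 \in X$, then for $h \in H$ one has $\d h \in H$ and $(\d x)(h) = -x_0(\d h)$, matching the natural left $\dd$-action on $X$; the difference between $\d\varsigma(x_0)$ and $\varsigma$ applied to the natural action thus vanishes on $H$ and lies in $I$. Finally, for the left half of (iv), if $x \in I$ and $h \in H$, then $\d h \in H$ forces $(\d x)(h) = -x(\d h) = 0$, so $\d x \in I$.
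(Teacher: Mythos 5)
Your proof is correct and follows essentially the same route as the paper: the decomposition $X' = \varsigma(X) \oplus I$ with $I = H^\perp$, the observation that the actions are by continuous derivations, and direct pairings against the PBW basis. The only cosmetic difference is in the second clause of (i), where you compute $c_0 = \ep(\d^{(K'')}\d) = 0$ explicitly, while the paper notes that $\d^{(K)}\d$ lies in the left ideal $H'H_+ \subset (x^k)^\perp$ --- the same fact in different packaging.
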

\begin{proof}
\mbox{ }
\begin{enumerate}
\item[{\em (i)}] 
Let $\d \in \dd$. Then by definition $\langle x^k\d, h\rangle = - \langle x^k, h\partial\rangle$. When $h \in \ue(\dd) \subset \ue(\dd')$, then $h\d$ lies in the augmentation ideal of $\ue(\dd)$; consequently, when $h$ is a member of the PBW basis \eqref{dpbw} of $\ue(\dd')$, the only contribution to $\langle x^k, \d^{(K)}\cdot \d\rangle,$ where $r+1 \leq k \leq N+r$, will arise from those $K$ that are supported in indices $r+1$ through $N+r$.
Thus, the right action of $\d\in \dd$, viewed as an element of $\dd'$, on $x^k\in X$, viewed as an element of $X'$, will coincide with the right action of $\d$ on $x^k$. The first claim now follows by observing that the right action od $\dd'$ on $X'$ is by (continuous) derivations, and that $x^k, r+1\leq k \leq N+r,$ are (topological) algebra generators of $X \subset X'$, so that the right action of $\dd\subset \dd'$ on $X\subset X'$ is uniquely determined by its action on elements $x^k, r+1 \leq k \leq N+r$.

As for the second claim, we similarly argue that if $k\leq r$ and $\d \in \dd$, then $\langle x^k\d, \d^{(K)}\rangle = - \langle x^k, \d^{(K)}\d\rangle$ vanishes, as $\d^{(K)}\d$ lies in the left ideal of $\ue(\dd')$ generated by the augmentation ideal of $\ue(\dd) \subset \ue(\dd')$, which lies in $(x^k)^\perp$.

\item[{\em (ii)}] Follows immediately from part {\em (i)} and continuity of the action of $\dd'$ on $X'$.

\item[{\em (iii)}]
It is enough to compute $\langle \d x^k, \d^{(K)}\rangle = - \langle x^k, \d \d^{(K)}\rangle$ when $r+1 \leq k \leq N+r$ and $K$ is supported over the same indices, as other choices of $K$ will yield contributions lying in the ideal $I$.
However, only the structure of $\dd$ is involved in the computation of the above expressions $\langle x^k, \d \d^{(K)}\rangle$.


\item[{\em (iv)}] The ideal $I = \ker \iota^*\subset X'$ coincides with $H^\perp$. The claim follows from the fact that both left- and right- multiplication by elements of $\dd$ stabilize $H \subset H'$.
\end{enumerate}
\end{proof}

\section{Lie pseudoalgebra preliminaries}

\subsection{Pseudoalgebraic definitions}

Let $H = \ue(\dd)$. An $H$-pseudoalgebra is a left $H$-module $L$ endowed with an $H\otimes H$-linear {\em pseudoproduct} $L \otimes L \to (H \otimes H) \otimes_H L$, where $(H \otimes H) \otimes_H L$ is defined as in Section \ref{straight}.

A pseudoproduct is usually denoted by $a \otimes b \mapsto a*b$, and one may make sense of $(a*b)*c, a*(b*c)$ as elements in $(H \otimes H \otimes H) \otimes_H L$, as in \cite[(3.15)-(3.19)]{BDK}. Then a Lie pseudoalgebra is a pseudoalgebra whose pseudoproduct satisfies a {\em pseudo-version} of the skew-symmetry and Jacobi identity axioms for a Lie algebra. In this context, the pseudoproduct is called {\em Lie pseudobracket} and the most usual notation for it is $[a * b]$. The correct pseudoalgebraic translation of the axioms is
\begin{align}
[a * b] & = - (\sigma \otimes_H \id_L) [b*a];\\
[a * [b * c]] & = [[a * b] * c] + ((\sigma \otimes \id_H) \otimes_H \id_L) [b*[a*c]],
\end{align}
where $a, b, c \in L$ and $\sigma: H \otimes H \to H \otimes H$ is the {\em flip} map switching the two tensor factors. If $A, B \subset L$ are $H$-submodules, then it is convenient to define $[A,B]$ as the smallest $H$-submodule $S \subset L$ such that $[a*b] \in (H \otimes H) \otimes_H S$ for all $a\in A, b \in B$. Then $A \subset L$ is a {\em subalgebra} if $[A,A] \subset A$, and an ideal if $[L,I] \subset I$. A Lie pseudoalgebra $L$ is {\em abelian} if $[L,L] = 0$ and {\em simple} if it is not abelian and its only ideals are the trivial ones $(0), L$.

One may similarly define representations of pseudoalgebras. In our setting, if $L$ is an $H$-Lie pseudoalgebra and $M$ a left $H$-module, we may consider a {\em pseudoaction} to be an $H \otimes H$-linear map $L \otimes M \to (H \otimes H) \otimes_H M$ that we will denote by $a\otimes m \mapsto a*m$. This defines a {\em Lie pseudoalgebra representation} when
\begin{equation}\label{lieaction}
[a * b] * m = a * (b * m) - ((\sigma \otimes \id_H) \otimes_H \id_M)\,\, b * (a * m),
\end{equation}
for all $a, b \in L, m \in M,$ and equality is understood to hold inside $(H \otimes H \otimes H) \otimes_H M$ as before. We will also say that $M$ is an $L$-module.

\begin{remark}
We stress the fact that a pseudoaction of a Lie pseudoalgebra $L$ on the left $H$-module $M$ is nothing else than a $H \otimes H$-linear maps $L \otimes M \to (H \otimes H) \otimes_H M$. This only makes $M$ into a Lie pseudoalgebra representation of $L$ when \eqref{lieaction} is satisfied.
\end{remark}

Once again, if $A \subset L, N \subset M$, one may define $A\cdot N$ to be the smallest $H$-submodule $S\subset M$ such that $[a*n] \in (H \otimes H) \otimes_H S$ for all $a \in L, n \in N$. Then $N\subset M$ is an $L$-submodule if $L\cdot N \subset N$ and  $M$ {\em has a trivial action} of $L$ if $L\cdot M = (0)$; notice that when the action of $L$ on $M$ is trivial, then any $H$-submodule of $M$ is automatically an $L$-submodule.

An $L$-module $M$ is {\em irreducible} if its only $L$-submodules are $(0), M$ and {\bf it does not have a trivial action} of $L$. An $L$-submodule $N \subsetneq M$ is {\em maximal} if the only submodules of $M$ containing it are $N$ and $M$; then $M/N$ is either an irreducible $L$-module or it is a simple (nonzero) $H$-module with a trivial action of $L$. In particular, if $M/N$ is an irreducible $L$-module, then $L\cdot M = M$.

\begin{remark}
When $A, B \subset L$, then $[A, B]$ denotes an $H$-submodules of $L$, whereas we reserve the notation $[A * B]$ for the subset of $(H \otimes H) \otimes_H L$ containing all $[a * b], a \in A, b \in B$. The same applies to $A \cdot B, A * B$, where $A \subset L, B \subset M$ and $M$ is a Lie pseudoalgebra representation of $L$.
\end{remark}

\subsection{Annihilation algebras}\label{annalg}

Let $L$ be a Lie pseudoalgebra over $H$, and denote as usual by $X = H^*$ the commutative algebra dual to $H$. Then $\L = X \otimes_H L$ may be endowed with a bilinear product defined by
\begin{equation}\label{annihbracket}
[x \otimes_H a, y \otimes_H b] = \sum_i (x h^i)(y k^i) \otimes_H c_i,
\end{equation}
as soon as the Lie pseudobracket on $L$ satisfies $[a * b] = \sum_i (h^i \otimes k^i) \otimes_H c_i$. Then $H$-bilinearity along with the pseudoalgebra analogue of skew-symmetry and Jacobi identity for $[\,\,*\,\,]$ ensure that \eqref{annihbracket} is a Lie bracket on $\L$.

Let now $\dd$ be a finite-dimensional Lie algebra, $H = \ue(\dd)$ the corresponding universal enveloping algebra, and assume that the {\em finitely generated} $H$-module $L$ admits a Lie pseudoalgebra structure over $H$. Then we may use the filtration on $X = H^*$ so as to build up a corresponding linearly compact topology on $\L$.

More explicitly, choose a finite dimensional vector subspace $S \subset L$ such that $L = HS$. If $\{s_i\}$ is a basis of $S$, then
$$[s_i * s_j] = \sum_{i} (h_{ij}^k \otimes k_{ij}^k) \otimes_H s_k$$
for some choice of $h_{ij}^k, k_{ij}^k \in H$, and one may find $\ell \in \Nset$ so that $h_{ij}^k \otimes k_{ij}^k \in \fil^{\ell} (H \otimes H)$ for all choices of $i, j, k$.

Setting $\L_i = (\fil_{i+\ell-1} X) \otimes_H S$ then provides $\L$ with a decreasing filtration 
\begin{equation}\label{filtra}
\L = \L_{-\ell} \supset \L_{-\ell+1} \supset \dots \supset \L_{0} \supset \L_1 \supset \dots
\end{equation}
which makes it into a linearly compact vector space, and satisfies $[\L_i , \L_j] \subset \L_{i+j}$ for all $i, j$. Thus, the Lie bracket is continuous with respect to the topology and $\L$ is a linearly compact topological Lie algebra. Notice that the filtration depends on the choice of the generating subspace $S$, but the topology it induces does not. Some choices of $S$ allow for more convenient, i.e., lower, values of $\ell$. In next section, we exhibit some convenient choices of $S$ when $L$ is a primitive Lie pseudoalgebra.

\begin{proposition}
The action of $H$ on $X$ is continuous. In particular, both the right- and the left-action of $\dd$ on $X$ is by continuous derivations.
\end{proposition}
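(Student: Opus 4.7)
The plan is to reduce continuity of the $H$-action on $X$ to a filtration estimate, using an explicit formula extending \eqref{dx1}--\eqref{dx2} to all of $H$. By induction on PBW degree, I would first establish that the left and right actions of $H$ on $X$ are given by
$$\langle hx, h'\rangle = \langle x, S(h)\,h'\rangle, \qquad \langle xh, h'\rangle = \langle x, h'\,S(h)\rangle,$$
for all $h, h' \in H$ and $x \in X$. The inductive step relies only on associativity of the action, anti-multiplicativity of $S$, and the fact that $\dd$ generates $H$; the need for the antipode is forced, since a naive extension without $S$ would fail associativity already at PBW degree $2$. Once these formulas are in place, continuity is a one-line verification.

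Next I would observe that the antipode preserves the canonical filtration $\fil^\bullet H$. Since $S(\partial)=-\partial$ for $\partial\in \dd\subset \fil^1 H$ and $\dd$ generates $H$, the anti-homomorphism property of $S$ combined with $\fil^p H\cdot \fil^q H \subset \fil^{p+q} H$ yields $S(\fil^p H)\subset \fil^p H$ by induction on $p$. Now fix $h\in \fil^p H$ and $i\geq 0$: if $x\in \fil_{p+i} X$, then for every $h'\in \fil^i H$ we have $S(h)\,h'\in \fil^{p+i} H$, so $\langle hx, h'\rangle = \langle x, S(h)\,h'\rangle = 0$, and hence $hx\in \fil_i X$. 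This shows $h\cdot \fil_{p+i} X\subset \fil_i X$, establishing continuity of left multiplication by $h$ on $X$. The argument for the right action is identical, using $\langle xh, h'\rangle = \langle x, h'\,S(h)\rangle$.

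For the \emph{in particular} clause, continuity has just been established, so it suffices to verify that $\partial\in\dd$ acts as a derivation on $X$. This is immediate from primitivity of $\partial$ together with the fact that the product on $X$ is dual to $\Delta$: expanding $\Delta(\partial) = \partial\tp 1 + 1\tp \partial$ in
$$\langle \partial(xy), h\rangle = -\langle x\tp y, \Delta(\partial)\Delta(h)\rangle$$
and applying \eqref{dx1} to each tensor factor yields $\langle \partial(xy), h\rangle = \langle (\partial x)y + x(\partial y), h\rangle$, and the right action is handled analogously via \eqref{dx2}.

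I do not anticipate a genuine obstacle: the whole proposition is a formal consequence of the Hopf-algebra structure and the elementary compatibility of the antipode with the PBW filtration. The only point requiring attention is the bookkeeping of the degree shift $p = \deg h$, which forces the preimage neighborhood to be $\fil_{p+i} X$ rather than $\fil_i X$; this shift will matter later when indexing the filtration \eqref{filtra} of the annihilation algebra, consistently with Remark \ref{donotadd}.
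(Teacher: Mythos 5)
Your argument is correct and follows the same route as the paper, whose proof is just the one-line observation that continuity follows from the estimate $(\fil^p H)(\fil_n X)\subset \fil_{n-p}X$ together with primitivity of the elements of $\dd$; you have simply supplied the details (the antipode formula $\langle hx,h'\rangle=\langle x,S(h)h'\rangle$, the fact that $S$ preserves the PBW filtration, and the duality of the product on $X$ with $\De$) that the paper leaves implicit. The degree shift you track is exactly the one the paper alludes to in Remark \ref{donotadd}, so nothing is missing.
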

\begin{proof}
It follows from $(\fil^p H).(\fil_n \L) \subset \fil_{n-p} \L$ and the fact that elements in $\dd\subset H$ are primitive in $H$. The right-action case is done in the same way.
\end{proof}

When dealing with representations of the Lie pseudoalgebra $L$, it is also convenient to introduce the so-called {\em extended annihilation algebra} $\widetilde \L$. This is the semi-direct product $\widetilde \L:= \dd \sd \L$, where the adjoint action of $\dd$ on $\L$ is defined as
$$[\d, x \otimes_H a]:= (\d x) \otimes_H a.$$

\begin{example}\label{elie}
As a trivial example, let us consider the case $\dd = (0)$, so that $H = \kk$.
If $L$ is a finite $H$-Lie pseudoalgebra, then $(H \otimes H) \otimes_H L = (\kk \otimes \kk) \otimes_\kk L$ can be canonically identified with $L$. Thus the Lie pseudobracket is simply a (bilinear) Lie bracket, and $L$ is a finite-dimensional Lie algebra.

The corresponding annihilation algebra $\L = X \otimes_H L = (\kk)^* \otimes_\kk L$ is canonically isomorphic to the Lie algebra $L$. The filtration on both $H$ and $X$ is trivial, and the corresponding topology on $\L$ is discrete; however, the concepts of discrete and linearly compact topologies coincide for finite-dimensional vectors spaces. As $\dd = (0)$, we also get $\widetilde \L = \L \simeq L$.
\end{example}

The relevance of the extended annihilation algebra lies in the following fact:
\begin{theorem}[{\cite[Proposition 9.4]{BDK}}]\label{repdescription}
The notions of Lie pseudoalgebra action of $L$ on the $H$-module $V$ is equivalent to that of continuous action of the Lie algebra $\widetilde \L$ on $V$ endowed with the discrete topology. More specifically, if $[a * v] = \sum_i (f^i \otimes g^i) \otimes_H v_i$, then
$$(x \otimes_h a).v = \sum_i \langle S(x), f^i g^i_{(-1)}\rangle g^i_{(2)}v_i.$$

Conversely, if $V$ is a discrete module over $\widetilde L$, one may use the $\dd \subset \widetilde \L$-action on $V$ in order to endow it with an $H = \ue(\dd)$-module structure, and recover the pseudoaction by
$$a*v = \sum_{i} (S(h^i) \otimes 1) \otimes_H (x_i \otimes_H a).v,$$
where $\{h^i\}$ and $\{x_i\}$ are dual bases of $H$ and $X$.
\end{theorem}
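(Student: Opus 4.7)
The plan is to establish the equivalence by constructing both directions explicitly from the two formulas given in the statement, verifying well-definedness, continuity, and the translation of axioms. Since this is essentially Proposition 9.4 of \cite{BDK}, I would follow the standard Hopf-algebraic dictionary carefully.

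First, starting from a pseudoaction $L \otimes V \to (H \otimes H) \otimes_H V$, I would define the bilinear map $\L \otimes V \to V$ by the first formula and check that it descends from $X \otimes L \otimes V$ to $X \otimes_H L \otimes V$. This is a direct calculation using $H \otimes H$-linearity of the pseudoaction: if $[a*v] = \sum_i (f^i \otimes g^i) \otimes_H v_i$, then $[(ha)*v] = \sum_i (h_{(1)}f^i \otimes h_{(2)}g^i) \otimes_H v_i$, and one uses \eqref{antip}, \eqref{cou}, \eqref{cou2} together with $\langle xh, k\rangle = \langle x, hk\rangle$ to verify that the values of $(xh\otimes_H a).v$ and $(x\otimes_H ha).v$ coincide. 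Continuity with respect to the discrete topology on $V$ then follows from the fact that, fixing $a \in L$ and $v \in V$, the expression $\sum_i \langle S(x), f^i g^i_{(-1)}\rangle g^i_{(2)}v_i$ vanishes whenever $x \in \fil_p X$ for $p$ sufficiently large (depending on the filtration degrees of the finitely many $f^i, g^i$ occurring in $[a*v]$, of which $v$ lies in a finitely-generated $H$-span). The $\dd$-action via the $H$-module structure on $V$ combined with the $\L$-action gives the required $\widetilde \L$-action; the bracket relation $[\d, x \otimes_H a] = (\d x) \otimes_H a$ is compatible with $(\d x\otimes_H a).v = \d.((x\otimes_H a).v) - (x \otimes_H a).(\d.v)$ by another straightforward Hopf calculation based on \eqref{dx1}.

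For the converse direction, given a continuous $\widetilde \L$-action on the discrete $H$-module $V$, the formula $a*v = \sum_i (S(h^i) \otimes 1) \otimes_H (x_i \otimes_H a).v$ involves dual bases $\{h^i\}, \{x_i\}$ of $H$ and $X$. Although a priori this is an infinite sum, continuity ensures that the stabilizer of $v$ in $\L$ contains some $\L_n$, and only finitely many $x_i$ give nonzero action on $v$; hence the sum is finite in the straightened form $(\cdot \otimes 1) \otimes_H V$. I would then verify the result is an $H\otimes H$-linear map $L \otimes V \to (H \otimes H) \otimes_H V$ by using that the $\dd$-action on $V$ reflects the $H$-module structure and compatibility between the $\dd$- and $\L$-parts of the $\widetilde \L$-action.

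The main obstacle is the bookkeeping required for showing the two constructions are mutual inverses and that the pseudoaction axiom \eqref{lieaction} translates exactly to the Jacobi identity for the $\widetilde \L$-action. Starting from a pseudoaction, applying the forward construction and then the backward construction returns the original pseudoaction by \eqref{antip} and the duality $\sum_i h^i \langle S(x_i), k\rangle = S(k)$-type identities between $\{h^i\}$ and $\{x_i\}$; the other composition returns the identity by a similar computation. For the axioms, I would compute both sides of \eqref{lieaction} on $v$, rewrite them using the $\L$-action formula and the definition \eqref{annihbracket} of the bracket on $\L$, and verify that the identity $[a*[b*v]] - ((\sigma\otimes\id_H)\otimes_H \id_V)\,b*[a*v] = [[a*b]*v]$ holds in $(H\otimes H\otimes H)\otimes_H V$ if and only if the corresponding Lie algebra commutator identity $(x\otimes_H a).((y\otimes_H b).v) - (y\otimes_H b).((x\otimes_H a).v) = [x\otimes_H a, y\otimes_H b].v$ holds in $V$. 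Corollary \ref{technical} is the tool that reduces both sides to equalities in vector spaces, after left-straightening, making the comparison a finite combinatorial exercise with $\De$, $S$ and $\ep$.
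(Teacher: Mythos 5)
The paper does not actually prove this statement: it is quoted verbatim as \cite[Proposition 9.4]{BDK}, so there is no internal proof to compare against. Your outline follows the standard argument of that reference (descend the formula from $X\otimes L\otimes V$ to $(X\otimes_H L)\otimes V$, check continuity via the filtration, adjoin the $\dd$-action to get the $\widetilde\L$-action, invert using dual bases, and match the axiom \eqref{lieaction} with the Jacobi identity after straightening), and the overall architecture is the right one.

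There is, however, one concrete error at the crux of the well-definedness step. You assert that $H\otimes H$-linearity of the pseudoaction gives $(ha)*v=\sum_i (h_{(1)}f^i\otimes h_{(2)}g^i)\otimes_H v_i$. It does not: by the definition of a pseudoaction as an $H\otimes H$-linear map $L\otimes V\to (H\otimes H)\otimes_H V$, where $H$ acts on $L$ in the first tensor slot and the target is an $H\otimes H$-module by left multiplication on the first factor, one has $(ha)*v=\sum_i (hf^i\otimes g^i)\otimes_H v_i$; the coproduct plays no role here. The identity you wrote is what one would get from a \emph{diagonal} $H$-action, and it is false in general. Since the descent from $X\otimes L$ to $X\otimes_H L$ is verified precisely by comparing $(xh\otimes_H a).v$ with $(x\otimes_H ha).v$, the computation as you set it up would be checking the wrong identity. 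The fix is routine (use the correct linearity together with $\langle xh,u\rangle=\langle x,uS(h)\rangle$ and \eqref{antip}--\eqref{cou2}), but as written this step fails. The remaining parts --- continuity, finiteness of the dual-basis sum, and the equivalence of \eqref{lieaction} with the commutator identity in $\widetilde\L$ --- are described plausibly but only at the level of a plan; the last of these is the real content of the proposition and would still need to be carried out.
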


It is known that the above equivalence preserves the natural notions of irreducibility, so that the study of irreducible representations of a Lie pseudoalgebra $L$ translates into that of irreducible representations of the corresponding extended annihilation algebra $\widetilde \L$.

We end this section by recalling a fact stated in \cite[Lemma 14.4]{BDK} that we are going to use multiple times. If $V$ is a Lie pseudoalgebra representation of $L$, set
\begin{itemize}\label{kern}
\item $\ker V:=\{v \in V\,|\, L*v = 0\}$;
\item $\ker_n V:= \{v \in V\,|\,\L_n.v = 0\}$,
\end{itemize}
whereas $\L_n$ is defined as in \eqref{filtra}, according to some generating subspace $S \subset L$. Then
\begin{proposition}
If $V$ is a finite $L$-module, then the quotient $\ker_n V/\ker V$ is a finite-dimensional vector space for every choice of $n\geq -\ell$, and it is nonzero when $n$ is sufficiently large.
\end{proposition}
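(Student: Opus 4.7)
The proof proceeds in two stages.

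For the second (nonvanishing) assertion, I appeal to continuity. By \thref{repdescription}, the action of $\widetilde\L$ on the discrete space $V$ is continuous, so the annihilator of each $v\in V$ is open in $\widetilde\L$. Since $\{\L_n\}_{n\geq -\ell}$ is a neighborhood basis of zero in $\widetilde\L$, every $v\in V$ is annihilated by $\L_n$ for sufficiently large $n$, yielding $V=\bigcup_{n\geq -\ell}\ker_n V$. Whenever the $L$-action on $V$ is nontrivial (i.e., $V\ne\ker V$), some element falls outside $\ker V$; as $\ker_n V$ is increasing in $n$, one obtains $\ker_n V/\ker V\ne 0$ for every sufficiently large $n$.

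For the first (finite-dimensionality) assertion I would induct on $n\geq -\ell$. The base case $n=-\ell$ gives $\ker_{-\ell}V=\ker V$ (since $\L_{-\ell}=\L$) and hence a zero quotient, so it suffices to show that each successive quotient $\ker_{n+1}V/\ker_n V$ is finite-dimensional. The natural tool is the $\kk$-linear evaluation map
\[
\Phi_n\colon \ker_{n+1}V\longrightarrow \Hom_\kk\bigl(\L_n/\L_{n+1},\,V\bigr),\qquad v\longmapsto\bigl(\bar y\mapsto y\cdot v\bigr),
\]
which is well-defined because $\L_{n+1}$ annihilates $\ker_{n+1}V$, and whose kernel is precisely $\ker_n V$. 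This embeds $\ker_{n+1}V/\ker_n V$ into $\Hom_\kk(\L_n/\L_{n+1},V)$; the quotient $\L_n/\L_{n+1}$ is finite-dimensional by the very construction of the filtration of $\L$ in \seref{annalg}.

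The principal obstacle is to replace the target $V$, generally infinite-dimensional, by a finite-dimensional subspace. For this I would use that $V$ is finitely generated over $H$: fix a finite-dimensional generating subspace $V_0\subset V$ with $V=HV_0$, and, by continuity, arrange that $V_0\subset\ker_{n_0}V$ for some $n_0$. Iterating the identity $y\cdot(\d v)=\d\cdot(y\cdot v)-(\d\cdot y)\cdot v$ (a special case of the Hopf-algebraic rule relating $y\cdot h$ and $h\cdot y$ in $\ue(\widetilde\L)$), one obtains
\[
y\cdot(\d^{(K)}v_0)=\sum_{K_1+K_2=K}(-1)^{|K_1|}\,\d^{(K_2)}\cdot\bigl((\d^{(K_1)}\cdot y)\cdot v_0\bigr).
\]
Since $\d^{(K_1)}\cdot y\in\L_{n-|K_1|}$ whenever $y\in\L_n$, all summands with $|K_1|\leq n-n_0$ vanish because $\L_{n_0}v_0=0$; the surviving factors $(\d^{(K_1)}\cdot y)\cdot v_0$ lie inside the finite-dimensional subspace $\L V_0\subset V$, which is a quotient of the finite-dimensional $(\L/\L_{n_0})\otimes V_0$. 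A leading-term analysis on the PBW expansion $v=\sum c_K\,\d^{(K)}v_0$ of an arbitrary $v\in\ker_{n+1}V$, together with the cancellation conditions imposed by $yv=0$ for all $y\in\L_n$, then forces the high-degree coefficients $c_K$ to lie in $\ker V$, bounding $v$ modulo $\ker V$ to a subspace of $(\fil^{p(n)}H)\,V_0+\ker V$ for some $p(n)$. This produces the finite-dimensional container into which the image of $\Phi_n$ is forced, giving the required bound on $\ker_{n+1}V/\ker_n V$ and closing the induction.
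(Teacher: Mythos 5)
The paper offers no proof of this proposition: it is quoted verbatim from \cite[Lemma 14.4]{BDK}, so there is no internal argument to compare yours against, and I can only judge the attempt on its own merits. Your second half is fine: continuity of the $\widetilde\L$-action on the discrete space $V$ does give $V=\bigcup_n\ker_n V$, and under the (necessary, and correctly flagged) hypothesis $V\neq\ker V$ this yields $\ker_n V\neq\ker V$ for $n$ large.

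The finite-dimensionality half, however, has a genuine gap precisely where the content of the lemma lies. The reduction to successive quotients and the embedding $\ker_{n+1}V/\ker_n V\hookrightarrow\Hom_\kk(\L_n/\L_{n+1},V)$ are correct but carry no information while the target still involves the infinite-dimensional $V$; everything therefore rests on your final claim that $\ker_{n+1}V\subset(\fil^{p(n)}H)V_0+\ker V$. That containment is essentially \emph{equivalent} to the proposition (it gives at once $\ker_{n+1}V/\ker V\hookrightarrow(\fil^{p}H)V_0/((\fil^{p}H)V_0\cap\ker V)$, rendering the $\Phi_n$/induction apparatus superfluous), and the ``leading-term analysis'' offered for it is an announcement rather than an argument. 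Concretely: (a) the expansion $v=\sum_K c_K\,\d^{(K)}v_0$ is not unique, since a finite $H$-module need not be free, so ``the high-degree coefficients'' are not well defined --- and as written the $c_K$ are scalars, for which ``lie in $\ker V$'' does not parse; (b) in your commutation formula the surviving terms have $|K_2|<|K|-(n-n_0)$, so the images $y\cdot v$ land in a finite-dimensional subspace only if $|K|$ is already bounded, i.e.\ the degree bound is an input to, not an output of, this computation; (c) the cancellations forced by $y\cdot v=0$ live in the associated graded module $\gr V$ over $\gr H$, where ``belonging to $\ker V$'' is not directly meaningful, and one must descend the filtration with a uniform bound. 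A correct proof does run along these lines, but it must be carried out on the left-straightened coefficient map $S\otimes V\to H\otimes V$ (note that $v\in\ker_n V$ exactly when its left-straightened coefficients lie in $\fil^{\,n+\ell-1}H\otimes V$), exploiting that $\gr H$ is a polynomial ring and hence a domain to control leading symbols. None of that is executed here, so the first assertion remains unproved.
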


\section{Primitive simple Lie pseudoalgebras}\label{sprimitive}

\subsection{Examples of primitive simple Lie pseudoalgebras}

Let $\dd$ be a finite dimensional Lie algebra.
In this paper, by {\em primitive Lie pseudoalgebra}, we mean a simple Lie pseudoalgebra over the cocommutative Hopf algebra $H = \ue(\dd)$ which cannot be obtained by means of a non-trivial current construction, see \cite[Section 4.2]{BDK} and Section \ref{scurrent} below. More specifically, they are either finite-dimensional simple Lie algebras over $H = \kk = \ue(\{0\})$, or one of the {\em primitive pseudoalgebras of vector fields} from \cite[Section 8]{BDK}. Let us review them briefly.

\begin{example}[Simple Lie algebras]\label{lie}
Let $\g$ be a finite dimensional simple Lie algebra over $H = \kk$. As in Example \ref{elie} its Lie bracket may be rewritten in a pseudoalgebraic fashion as follows:
$$[a * b] = (1 \otimes 1) \otimes_\kk [a, b], \qquad a, b \in \g.$$
Here we like to stress the trivial fact that $S = \g$ is a finite-dimensional vector space with the property that
$$[S * S] \in (1 \otimes 1) \otimes_\kk S.$$
We have already seen that the (extended) annihilation algebra of $\g$ is $\g$ itself.
\end{example}

\begin{example}[$\Wd$]\label{wd}
Let $\dd$ be a (nonzero) finite dimensional Lie algebra over $\kk$, $H = \ue(\dd)$. Then $L = \Wd = H \otimes \dd$ is a simple Lie pseudoalgebra when endowed with the unique pseudo-Lie bracket $H$-bilinearly extending
\begin{equation}\label{wdbracket}
[1 \otimes a * 1 \otimes b] = (1 \otimes 1) \otimes_H (1 \otimes [a, b]) + (b \otimes 1) \otimes_H (1 \otimes a) - (1 \otimes a) \otimes_H (1 \otimes b),
\end{equation}
where $a, b, \in \dd$.
Then $S = \kk \otimes \dd \subset H \otimes \dd$ is a finite dimensional vector subspace of $L$ which generates it as an $H$-module and direct inspection of \eqref{wdbracket} shows that
$$[S * S] \in (\dd\otimes \kk + \kk \otimes \dd + \kk \otimes \kk) \otimes_H S.$$
The annihilation algebra $\W = X \otimes_H \Wd$ is isomorphic, as a Lie algebra, to the Lie algebra $W_N$ from Cartan's classification \cite{Cartan}. We denote by $\E\in \W$ the element corresponding to the Euler vector field $E = \sum_{i=1}^N x^i \, \d/\d x^i\in W_N$ under such isomorphism.
\end{example}

\begin{example}[$\Sd$]
Let $\dd$ be a finite dimensional Lie algebra over $\kk$, $\chi \in \dd^*$ a trace form, and set $\Sd:=\left\{ \sum_i h^i \otimes \d_i \in \Wd\,|\, \sum_i h^i(\d_i + \chi(\d_i)) = 0\right\}.$ Then $\Sd \subset \Wd$ is the $H$-submodule generated by elements
$$s_{ab} = (a + \chi(a)) \otimes b - (b + \chi(b)) \otimes a - 1\otimes [a, b],\qquad a, b \in \dd,$$
and is a simple subalgebra of $\Wd$. Lie pseudobrackets between the above elements may be read off \cite[Proposition 8.4]{BDK}.
If we denote by $S$ the $\kk$-linear span of elements $s_{ab}, a, b \in \dd$, then $S$ is a finite-dimensional vector subspace of $\Sd$ generating it as an $H$-module, and satisfying
$$[S * S] \in ((\dd + \kk) \otimes (\dd + \kk)) \otimes_H S.$$
The annihilation algebra $\S = X \otimes_H \Sd$ is isomorphic, as a Lie algebra, to the Lie algebra $S_N$ from Cartan's list.
\end{example}

\begin{example}[$\Hd$ and $\Kd$]\label{hkd}
Let $\dd$ be a finite dimensional Lie algebra over $\kk$, and let the rank one free $H$-module $L = He$ support a Lie pseudoalgebra structure over $H = \ue(\dd)$. Then one may see \cite[Section 4.3]{BDK} that
\begin{equation}\label{hkbracket}
[e * e] = (r + s\otimes 1 - 1 \otimes s) \otimes_H e
\end{equation}
where $0 \neq r \in \bigwedge^2 \dd, s \in \dd$ satisfy opportune conditions, under which 
\begin{equation}\label{HKembeds}
e \mapsto -r + 1 \otimes s \in H \otimes \dd \simeq \Wd
\end{equation}
extends to an injective homomorphism of Lie pseudoalgebras $L \hookrightarrow \Wd$.

We obtain the primitive Lie pseudoalgebras $\Hd$, respectively $\Kd$, when $\dd$ is even dimensional and $r$ is non-degenerate, resp. when $\dd$ is odd dimensional and  it is linearly generated by $s$, along with the support of $r$.
Once again, choosing $S = \kk e$ provides a finite-dimensional vector subspace of $L$ satisfying 
$$[S * S] \in ((\dd + \kk) \otimes (\dd + \kk)) \otimes_H S.$$

The annihilation Lie algebra $\K = X \otimes_H \Kd$ is isomorphic to the Cartan type Lie algebra $K_N$, and we denote by $\E'\in \K$ the element corresponding to the Euler vector field $E' = 2x^1 \d/\d x^1 + \sum_{i=2}^{N} x^i \, \d/\d x^i\in K_N$ under such isomorphism. The annihilation algebra $\H = X \otimes_H \Hd$ is instead isomorphic to the unique irreducible central extension $P_N$ of the Cartan Lie algebra $H_N$.
\end{example}

We may sum up the above examples in the following
\begin{proposition}\label{lowdegree}
Let $\dd$ be a finite dimensional Lie algebra and $L$ be a simple primitive Lie pseudoalgebra over the cocommutative Hopf algebra $H = \ue(\dd)$ as in Examples \ref{lie}-\ref{hkd}. Then there exists a finite dimensional subspace $S \subset L$ and $\ell \in \NN$ such that $L = HS$ and
$$[S * S] \in ((\dd + \kk) \otimes (\dd + \kk)\cap \fil^\ell(H \otimes H)) \otimes_H S,$$
where
\begin{itemize}
\item[---] $\ell=0$ if $L = \g$ is a finite-dimensional simple Lie algebra;
\item[---] $\ell=1$ if $L = \Wd$;
\item[---] $\ell=2$ if $L = \Sd, \Hd, \Kd$.
\end{itemize}
\end{proposition}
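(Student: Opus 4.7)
The proof is essentially a uniform repackaging of the data already exhibited case by case in Examples \ref{lie}--\ref{hkd}. For each of the five types of primitive simple Lie pseudoalgebra I would select the same generating subspace $S$ already used there, and then verify the filtration bound $\ell$ by direct inspection of the explicit pseudobracket formulas.

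First I would treat the three easy cases. For $L = \g$ a finite-dimensional simple Lie algebra, take $S = \g$; by Example \ref{lie} the pseudobracket satisfies $[a * b] = (1\otimes 1)\otimes_\kk[a,b]$, and the coefficient $1\otimes 1$ lies in $\fil^0(H\otimes H)$, giving $\ell = 0$. For $L = \Wd$, take $S = \kk\otimes \dd$; formula \eqref{wdbracket} shows that the three coefficients appearing, namely $1\otimes 1$, $b\otimes 1$, and $1\otimes a$, all lie in $(\dd+\kk)\otimes(\dd+\kk)\cap \fil^1(H\otimes H)$, so $\ell = 1$. For $L = \Hd$ or $\Kd$, take $S = \kk e$; the single defining bracket \eqref{hkbracket} has coefficient $r + s\otimes 1 - 1\otimes s$, and since $r\in\bigwedge^2\dd \subset \fil^1 H \otimes \fil^1 H \subset \fil^2(H\otimes H)$ while $s\otimes 1,\, 1\otimes s \in \fil^1(H\otimes H)$, the bound $\ell = 2$ holds, and the coefficients clearly sit inside $(\dd+\kk)\otimes(\dd+\kk)$.

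The remaining case $L = \Sd$ is the only one requiring a small computation, and this is where I expect the main (though still routine) effort to lie. With $S = \Span_\kk\{s_{ab}\st a,b\in\dd\}$, one invokes the explicit formula for $[s_{ab} * s_{cd}]$ from \cite[Proposition 8.4]{BDK}. Reading off that formula, every coefficient on the left of $\otimes_H$ is a sum of terms of the form (element of $\dd + \kk$) $\otimes$ (element of $\dd + \kk$), with each tensor factor of PBW-degree at most one in $H$; hence the coefficients lie in $(\dd+\kk)\otimes(\dd+\kk)\cap \fil^2(H\otimes H)$, giving $\ell = 2$.

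In each case it is immediate that $S$ is finite-dimensional and that $L = HS$: for $\g$ this is trivial, for $\Wd$ it is the definition $\Wd = H\otimes \dd$, for $\Hd$ and $\Kd$ it follows from the freeness $L = He$, and for $\Sd$ it is part of the construction recalled in the example (namely that $\Sd$ is the $H$-submodule of $\Wd$ generated by the $s_{ab}$). The main (and only) obstacle is organizational: one must read off the existing bracket formulas and check that the filtration-degree $2$ terms present in $\Sd$, $\Hd$, and $\Kd$ cannot be reduced, which they cannot because $r$ is nonzero and the $s_{ab}$ involve genuine tensor products of elements of $\dd$.
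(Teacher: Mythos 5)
Your proposal is correct and follows exactly the route the paper takes: Proposition \ref{lowdegree} is stated there as a direct summary of Examples \ref{lie}--\ref{hkd}, each of which already records the chosen generating subspace $S$ and the shape of $[S*S]$, so the only content is the case-by-case inspection of coefficients that you carry out. (Your closing remark about the degree-$2$ terms being irreducible is harmless but not needed, since the proposition asserts only the containment for some $\ell$, not its minimality.)
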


\subsection{Annihilation algebra of primitive Lie pseudoalgebras of vector fields}
\label{nonontrivial}

In this section, we recall a few facts from \cite[Section 6]{BDK} on infinite-dimensional simple linearly compact Lie algebras and their irreducible central extensions. Here $L$ is a primitive simple finite Lie pseudoalgebra over $H = \ue(\dd)$, where $\dd$ is a Lie algebra of finite dimension $N>0$.

We have already seen in Examples \ref{wd}-\ref{hkd} that the corresponding annihilation algebra is isomorphic to one of the linearly compact Lie algebras $W_N, S_N, K_N, P_N$, where $P_N$ is the unique irreducible central extension of $H_N$. All such Lie algebras admit a $\ZZ$-grading:
\begin{itemize}
\item[---] The annihilation algebra $\W \simeq W_N$ of $\Wd$ is graded in indices $\geq -1$ by the semisimple adjoint action of the Euler vector field $\E \in \W$. 
\item[---] The inclusion $\Sd \hookrightarrow \Wd$ induces an embedding of annihilation algebras $\S \hookrightarrow \W$. Then $\S \simeq S_N$ is graded in indices $\geq -1$ by the semisimple action of the non-inner derivation $\ad \E$, where $\E \in \W$ is as above.
\item[---] The inclusion $\Kd \hookrightarrow \Wd$ from \eqref{HKembeds} induces an embedding of annihilation algebras $\K \hookrightarrow \W$. Then $\K \simeq K_N$ is graded in indices $\geq -2$ by the semisimple adjoint action of the contact Euler vector field $\E' \in \K$.
\item[---] The inclusion $\Hd \hookrightarrow \Wd$ from \eqref{HKembeds} induces a non-injective homomorphism of annihilation algebras $\P \hookrightarrow \W$, whose kernel equals the center $Z(\P)$. Then $\P \simeq P_N$ is graded in indices $\geq -2$ by the semisimple action of any lifting of the non-inner derivation $\ad \E \in \ad\W$ acting on $\P/Z(\P) =: \H \simeq H_N \subset W_N$, where $\E\in \W$ is as above.
\end{itemize}
Denote by $\L$ the annihilation algebra of our primitive Lie pseudoalgebra $\L$, and set $\L^i$ to be the graded part of degree $i$ in $\L$, with respect to the above defined grading. Then $\L = \prod_{i \geq -2} \L^i$ and
\begin{itemize}
\item $[\L^i, \L^j] \subset \L^{i+j}$;
\item $\L^0\subset \L$ is a reductive subalgebra, isomorphic to $\gl_N, \sl_N, \sp_N, \csp_{N-1} = \sp_{N-1} \oplus \kk$ when $L = \Wd, \Sd, \Hd, \Kd$ respectively;
\item each $\L^i$ is a finite-dimensional completely reducible representation of $\L^0$.
\end{itemize}
Central elements from $P_N$ all lie in the degree $-2$ part. 
We will employ the following notation:
$$\L^{> n} = \prod_{j > n}\L^j,\qquad  \L^{\geq n} = \prod_{j \geq n}\L^j,\qquad  \L^{\neq 0} = \prod_{j \neq 0} \L^j.$$
We will also denote by $\L^\perp$ the sum of all isotypical $\L^0$-components relative to nontrivial $\L^0$-actions. Then $\L^\perp\subset \L$ is a complement to the trivial isotypical $\L^0$-component, and $\L^{>0} \subset \L^\perp$. In all cases $\L$ equals its derived subalgebra, so that $\L = [\L^\perp, \L^\perp]$.
Notice that $\{\L^{\geq n}\}_{n \in \ZZ}$ is a decreasing filtration of $\L$ which coincides with $\{\L_n\}_{n \in \ZZ}$ when $\L = \Wd, \Sd, \Hd$, see \cite{BDK1, BDK3}.

In the remaining case $L = \Kd$, the two filtrations are distinct but equivalent, see \cite{BDK2}, and they induce the same linearly compact topology. More precisely,
$$\K_n \subset \K^{\geq n}, \qquad \K^{\geq n} \subset \K_{\lfloor\frac{n-1}2\rfloor},$$
so that $\K_n$ certainly contains $\K^{\geq 2n+1}$. Also notice that $\K_0 \subset \K^\perp$.

\subsection{$\Hd$, $\dd_+$ and $\spdd$}\label{shds}

Choose $r \in \bigwedge^2 \dd$, $s \in \dd$. Setting
\begin{equation}\label{freerankone}
[e*e] = (r + s\otimes 1 - 1 \otimes r) \otimes_H e
\end{equation}
defines a Lie pseudobracket on the free $H$-module $L = He$ of rank $1$ precisely when identities
\begin{align}\label{Hidentities}
[r, \Delta(s)] & = 0\\
([r_{12}, r_{13}] + r_{12}s_3) & + (\mbox{cyclic permutations}) = 0
\end{align}
hold, see \cite[Equations (4.3)-(4.4)]{BDK}.

If $r = \sum_{ij} r^{ij} \d_i \otimes \d_j$ is nondegenerate, then $(r^{ij})$ is an invertible skew-symmetric matrix, whose inverse we denote by $(\omega_{ij})$. Setting $\omega(\d_i \wedge \d_j)$ then defines a nondegenerate skew-symmetric $2$-form on $\dd$, that we may use to set $\chi = \iota_s \omega$. Then $L \simeq \Hd$ and identities \eqref{Hidentities} are equivalent to
\begin{align}
\di \omega + \chi & \wedge \omega = 0;\\
\di \chi & = 0.
\end{align}
This means that the $1$-cocycle, or {\em traceform}, $\chi$ may be used to define a $1$-dimensional $\dd$-module $\kk_\chi$, and that $\omega$ is a $2$-cocycle with values in $\kk_\chi$. In other words, we may set up an abelian extension\footnote{The Lie algebra $\dd_+$ is denoted $\dd'$ in \cite{BDK3}.}
$$0 \to \kk_\chi \to \dd_+ \stackrel{\pi}{\to} \dd \to 0,$$
whose representations prove useful in the description of $\Hd$-tensor modules. Notice that $\dd_+ \simeq \dd \oplus \kk_\chi$ as vector spaces: we shall denote the linear generator of $\kk_\chi$ by $c$ and abuse the notation by denoting with $\d$ both elements from $\dd$ and those from $\dd_+$. The Lie bracket in $\dd_+$ then satisfies
$$[\d, \d']_+ = [\d, \d'] + \omega(\d \wedge \d')c.$$
If we set $\overline{\d} = \d - \chi(\d)$, and $\d^i = \sum_{j} r^{ij}\d_j$, then
$\omega(\d^i \wedge \d_j) = \delta^{i}_{j}$ and $\d_i = \sum_j \omega_{ij}\d^j$. Also, \eqref{freerankone} rewrites as
\begin{equation}\label{freerankone2}
[e*e] = \sum_k (\overline{\d_k} \otimes \overline{\d^k}) \otimes_H e.
\end{equation}
Notice that skew-symmetry of $r$ implies $\sum_k \d_k \otimes \d^k = - \sum_k \d^k \otimes \d_k$, so that
$$\sum_k \d_k \d^k = - \sum_k \d^k \d_k = \frac{1}{2} \sum_k [\d_k, \d^k]$$
belongs to $[\dd, \dd]$. This implies that every traceform on $\dd$ vanishes on the above element.

One may use the symplectic form $\omega$ on $\dd$ to define the symplectic subalgebra $\spdd\subset \gld$. More precisely, $\phi \in \gld$ lies in $\spdd$ if and only if
$$\omega(\phi(\d) \wedge \d') + \omega(\d \wedge \phi(\d')) = 0.$$
Now define $\ad_\chi \d: \dd \to \dd$ as
$$(\ad_\chi \d)(\d') := [\d, \d'] + \chi(\d')\d.$$
We will later need the following technical fact:
\begin{lemma}\label{equivalent}
Let $\dd$ be an even dimensional Lie algebra, $\chi \in \dd^*$ a traceform on $\dd$, and assume that $2$-form $\omega \in \bigwedge^2 \dd^*$ is nondegenerate and satisfies $\di \omega + \chi \wedge \omega = 0$. 
Choose $s \in \dd$ so that $\chi = \iota_s \omega$.
If $\delta \in \dd$, then the following are equivalent:
\begin{enumerate}
\item $[s, \delta] = 0$ and $\ad_\chi \delta \in \spdd$;
\item $\iota_\delta \omega \in \dd^*$ is a traceform and $\chi(\delta) = 0$.
\end{enumerate}
\end{lemma}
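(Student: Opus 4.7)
The plan is to convert the condition $\ad_\chi\delta \in \spdd$ into a single clean identity using the closedness hypothesis $\di\omega + \chi\wedge\omega = 0$, and then read off both implications from there.

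First I would unwind the symplectic condition $\omega(\phi X, Y) + \omega(X, \phi Y) = 0$ for $\phi = \ad_\chi\delta$; this gives, for all $X, Y \in \dd$,
\begin{equation*}
\omega([\delta, X], Y) + \omega(X, [\delta, Y]) + \chi(X)\,\omega(\delta, Y) + \chi(Y)\,\omega(X, \delta) = 0.
\end{equation*}
Evaluating $\di\omega + \chi\wedge\omega = 0$ on the triple $(\delta, X, Y)$ rewrites the first two summands above as a sum involving $\omega([X, Y], \delta)$, $\chi(\delta)\omega(X, Y)$, and two further $\chi$-dependent terms which precisely cancel the last two summands. After simplification, $\ad_\chi\delta \in \spdd$ is equivalent to the identity
\begin{equation*}
\omega([X, Y], \delta) = \chi(\delta)\,\omega(X, Y) \qquad \text{for all } X, Y \in \dd.
\end{equation*}

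For the direction (1)$\Rightarrow$(2), I would use $[s, \delta] = 0$ and substitute $X = s$, $Y = \delta$ into the displayed identity: its left-hand side vanishes, while the right-hand side equals $\chi(\delta)\,\omega(s, \delta) = \chi(\delta)^2$. Hence $\chi(\delta) = 0$, and the identity collapses to $(\iota_\delta\omega)([X, Y]) = -\omega([X, Y], \delta) = 0$ for all $X, Y$, which is exactly the statement that $\iota_\delta\omega$ is a traceform.

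For the converse (2)$\Rightarrow$(1), the hypotheses directly imply the displayed identity (both sides vanish), hence $\ad_\chi\delta \in \spdd$. The remaining work is to deduce $[s, \delta] = 0$. I would apply $\di\omega + \chi\wedge\omega = 0$ to the triple $(s, \delta, X)$: the right-hand side vanishes because $\chi(s) = \omega(s, s) = 0$, $\chi(\delta) = 0$ by hypothesis, and $\omega(s, \delta) = \chi(\delta) = 0$. What remains is a relation of the shape
\begin{equation*}
\omega([s, \delta], X) = \pm\,\omega([s, X], \delta) \pm\, \omega([\delta, X], s),
\end{equation*}
whose right-hand side vanishes: the first term equals $\mp(\iota_\delta\omega)([s, X]) = 0$ because $\iota_\delta\omega$ is a traceform, and the second equals $\mp\chi([\delta, X]) = 0$ because $\chi$ is itself a traceform. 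So $\omega([s, \delta], X) = 0$ for every $X \in \dd$, and nondegeneracy of $\omega$ forces $[s, \delta] = 0$.

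The main obstacle is bookkeeping with sign conventions for the Chevalley-Eilenberg differential, the wedge product and the interior product, which must be tracked carefully to confirm the two cancellations described above. Structurally the argument is clean and symmetric: both implications amount to evaluating $\di\omega + \chi\wedge\omega = 0$ on a suitable triple, the first on $(\delta, X, Y)$ and the second on $(s, \delta, X)$, with nondegeneracy of $\omega$ serving only to close the converse.
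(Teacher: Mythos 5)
Your proof is correct and takes essentially the same route as the paper's: both directions come down to evaluating $\di\omega + \chi\wedge\omega = 0$ on suitable triples and combining with the symplectic condition, and your central identity $\omega([X,Y]\wedge\delta) = \chi(\delta)\,\omega(X\wedge Y)$ is precisely the paper's \eqref{omdbc}, while your derivation of $[s,\delta]=0$ in the converse matches the paper's use of \eqref{dwxw2}. The only (minor) streamlining is that you get $\chi(\delta)=0$ immediately from the substitution $X=s$, $Y=\delta$, which yields $0=\chi(\delta)\,\omega(s\wedge\delta)=\chi(\delta)^2$, whereas the paper first shows $\iota_\delta\omega$ vanishes on $\im\ad s$ and then appeals to nondegeneracy of $\omega$.
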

\begin{proof}
We have $s = \sum_k \chi(\d_k) \d^k$ and $\chi(s) = \omega(s\wedge s) = 0$. Identity $\di \omega + \chi \wedge \omega = 0$ translates into
\begin{equation}\label{dwxw}
\omega([a, b]\wedge c) + \omega([b, c]\wedge a) + \omega([c, a]\wedge b) - \chi(a) \omega(b\wedge c) - \chi(b) \omega(c\wedge a) - \chi(c) \omega(a\wedge b) = 0, 
\end{equation}
for all choices of $a, b, c \in \dd$. Substituting $a = s$ into \eqref{dwxw} then yields
\begin{equation}\label{dwxw2}
\omega([s, b]\wedge c) +  \omega(b\wedge [s, c]) = 0, 
\end{equation}
as $\iota_s \omega = \chi$ vanishes on $[\dd, \dd]$ and
$$\chi(b) \omega(c\wedge s) + \chi(c) \omega(s\wedge b) = - \chi(b)\chi(c) + \chi(c)\chi(b) = 0.$$

(1) $\implies$ (2). If we plug $b = \delta$ into \eqref{dwxw2}, we obtain $\omega(\delta\wedge [x, c]) = 0$ for all $c \in \dd$, thus showing that the linear functional $\iota_\delta \omega \in \dd^*$ vanishes on $\im \ad s$.
Setting $a = \delta$ in \eqref{dwxw} and using $\ad_\chi \delta \in \spdd$ yields
\begin{align*}
\omega([\delta, b]\wedge c) & + \omega([b, c]\wedge \delta) + \omega([c, \delta]\wedge b)\\
 & - \chi(\delta) \omega(b\wedge c) - \chi(b) \omega(c\wedge \delta) - \chi(c) \omega(\delta\wedge b) = 0, \\
\omega([\delta, b]\wedge c) & + \omega(b\wedge [\delta, c]) + \chi(b) \omega(\delta\wedge c) + \chi(c) \omega(b\wedge \delta) = 0,
\end{align*}
which together give
\begin{equation}\label{omdbc}
\omega(\delta\wedge [b,c]) + \chi(\delta) \omega(b\wedge c) = 0.
\end{equation}
Using $b = s$ in \eqref{omdbc} then shows that $\chi(\delta)\omega(s\wedge c) = 0$ for all $c \in \dd$. As $\omega$ is nondegenerate, this forces $\chi(\delta) = 0$, whence $\omega(\delta\wedge [b,c]) = 0$ for all $b, c \in \dd$, implying $\iota_\delta \omega$ is a trace-form on $\dd$.

(2) $\implies$ (1). Substitute $b = \delta$ in \eqref{dwxw2} to obtain $\omega([x, \delta]\wedge c) + \omega(\delta\wedge [x, c]) = 0$. As $\iota_\delta \omega$ is a traceform, then $\omega([s, \delta]\wedge c) = 0$ for all $c \in \dd$. However, $\omega$ is nondegenerate, hence $[\delta, s] = 0$. Now set $a = \delta$ in \eqref{dwxw}, and use $\chi(\delta) = 0$ along with the fact that $\iota_\delta \omega$ is a traceform. This gives
\begin{equation}
\omega([\delta, b]\wedge c) + \omega([c, \delta]\wedge b) - \chi(b) \omega(c\wedge \delta) - \chi(c) \omega(\delta\wedge b) = 0. 
\end{equation}
The left hand side now rewrites as
\begin{equation}\label{dwxw3}
\omega(([\delta, b] + \chi(b))\delta\wedge c) + \omega(b\wedge ([\delta, c] + \chi(c) \delta)) = 0,
\end{equation}
thus showing that $\ad_\chi \delta \in \spdd$.
\end{proof}

We end this section by recalling notation that we will later employ. If $V$ is a representation of a Lie algebra $\g$, and $\chi \in \g^*$ is a traceform, then we will denote by $V_\chi$ the tensor product $V \otimes \kk_\chi$, where $\kk_\chi$ is the one-dimensional representation corresponding to $\chi: \g \to \kk$ viewed as a Lie algebra homomorphism. Notice that this notation is compatible with denoting by $\kk$ the trivial representation of $\g$. Also, if $\pi: \g \to \h$ is a Lie algebra homomorphism, and $\chi$ is a traceform of $\h$, we will denote by $\pi^*\chi = \chi \circ \pi$ the pullback traceform on $\g$.

Finally, if $V, W$ are Lie algebra representations of the Lie algebras $\g, \h$ respectively, then $V \boxtimes W$ is the corresponding tensor product representation of the direct sum Lie algebra $\g \oplus \h$. Notice that every irreducible $\g \oplus \h$-module is isomorphic to $V \boxtimes W$ for a suitable choice of irreducible $\g$-, resp. $\h$-, modules $V, W$.

\section{Tensor modules of primitive Lie pseudoalgebras}

Each finite irreducible representations of a primitive Lie pseudoalgebra $L$ as in Examples \ref{wd}-\ref{hkd} arises as a quotient of an opportune $L$-module from 
a special class of representations called {\em tensor modules}, that are parametrized by (finite-dimensional irreducible) Lie algebra representations of the direct sum of $\L^0$ and a Lie algebra isomorphic either to $\dd$ in types W, S, K, or $\dd_+$ in type H. this section, we recall their definition from \cite{BDK1, BDK2, BDK3}. As usual, $\dim \dd = N$ and $\{\d_1, \dots, \d_N\}$ is a basis of $\dd$.

\subsection{Tensor modules for $\Wd$}\label{wdtm}

When $L = \Wd$, the degree zero component $\L^0$ of the annihilation algebra is isomorphic to $\gld\simeq \gl_N$. Let $\{\d_i|1 \leq i \leq N\}$ be a basis of the Lie algebra $\dd$ and $R = (\Pi \boxtimes U, \rho)$ be an irreducible representation of $\dd \oplus \gld$.
Then there is a Lie pseudoalgebra action of $\Wd = H \otimes \dd$ on $H \otimes R$ defines by 
\begin{align}\label{wdtensoraction}
(1 \otimes \d_i) * (1 \otimes v) = \sum_{j = 1}^N (\d_j \otimes 1) & \otimes_H (1 \otimes \rho(e_i^j)v)\\
+\,\, (1 \otimes 1) & \otimes_H (1 \otimes \rho(\d_i + \ad \d_i)v - \d_i \otimes v),\nonumber
\end{align}
where $v \in R$ and $e_i^j \in \gld$ denotes the elementary matrix satisfying $e_i^j(\d_j) = \d_i$. The notation for this $\Wd$-module from \cite[Definition 6.2]{BDK1} is $\V(R) = \V(\Pi, U)$, but we will add a W superscript to distinguish it from tensor modules over primitive Lie pseudoalgebras of other types.

Notice that $\VW(\Pi, U)$ is irreducible unless $U$ is isomorphic to $\bigwedge^n \dd$ for some $0 \leq n <N$, in which case it has a unique nontrivial maximal $\Wd$-submodule. The corresponding quotient is thus a finite irreducible $\Wd$-module, with the single exception of the case $n=0$, when it has a trivial $\Wd$-action.

\subsection{Tensor modules for $\Sd$}\label{sdtm}

In this case, $\L^0\simeq \sld\simeq \sl_N$. Recall that $\Sd$ has a unique Lie pseudoalgebra embedding in $\Wd$, so that every $\Wd$-tensor modules as in \eqref{wdtensoraction} 
becomes an $\Sd$-module by restriction. Then a representation of $\Sd$ is a tensor module if it is the restriction of a $\Wd$-tensor module; notice that each $\Sd$-tensor module arises as such a restriction in more than one way, and that a tensor module is reducible if and only if it is the restriction of at least one reducible $\Wd$-tensor module.

If $R = (\Pi \boxtimes U, \rho)$ is an irreducible representation of $\dd \oplus \sld$, we denote by $\VS(R) = \VS(\Pi, U)$ the tensor module\footnote{This is denoted $\V_\chi(R) = \V_\chi(\Pi, U)$ in \cite[Definition 7.2]{BDK1}.} obtained by restricting the $\Wd$-tensor module $\VW(\Pi, U^0)$, where $U^0$ is the $\gld$-module obtained by extending  the $\sld$-action on $U$ so that $\id \in \gld = \sld \oplus \kk \id$ acts trivially.

Once again, the $\Sd$-tensor module $\VS(\Pi, U)$ is irreducible unless $U$ is  either trivial or isomorphic to one of the fundamental representations $\bigwedge^n \dd, 0 < n < N$. In the latter cases, there is a unique maximal submodule which yields an irreducible quotient, whereas when $U$ is the trivial $\sld$-module, the corresponding quotient has a trivial $\Sd$-action.

\subsection{Tensor modules for $\Kd$}\label{kdtm}

If $\dd_0 = \ker \theta$, then $\di \theta$ is a symplectic form on $\dd_0$, and $\L^0 \simeq \csp(\dd_0, \di \theta) = \sp(\dd_0, \di \theta) \oplus \kk c \simeq \sp_{N-1} \oplus \kk$. Here, elements $\d_1, \dots, \d_{N-1}$ form a basis of $\dd_0$ and $\d_N = s$ as in \eqref{hkbracket}. Elements $\d^k$ are defined as in Section \ref{shds}. We similarly raise indices by introducing elements $e^{ij} \in \gl \dd_0$ that satisfy $e^{ij}\d_k = \delta^j_k \d^i$. The Lie subalgebra $\sp(\dd_0, \di \theta)$ is then generated by elements
$$f^{ij} = - \frac{1}{2}(e^{ij} + e^{ji}), 1 \leq i \leq j \leq N-1.$$
Notice that differences $e^{ij} - e^{ji}$ span a (skew-symplectic) complement to $\sp(\dd_0, \di \theta)$ in $\gld_0$, so that one may consider the projection $\pi^\sp$ to the symplectic summand. Then we denote by $\ad^\sp(\d)$ the expression $\pi^\sp(\ad \d - \d_N \cdot \iota_\d \omega)$.

If $R = (\Pi \boxtimes U, \rho)$ is a finite-dimensional irreducible representation of $\dd \oplus \sp(\dd_0, \di \theta)$ then
\begin{align}
e * (1 \otimes v) = \sum_{i,j=1}^{2n} (\d_i \d_j \otimes 1) & \otimes_H( 1 \otimes \rho(f^{ij})v)\\
- \sum_{k = 1}^{N-1} (\d_k \otimes 1) & \otimes_H (1 \otimes \rho(\d^k + \ad^\sp(\d^k))v - \d^k \otimes v)\nonumber\\
+ \,\frac{1}{2}(\d_{N} \otimes 1) & \otimes_H (1 \otimes \rho(c)v)\nonumber\\
+ \,\,(1 \otimes 1) & \otimes_H (1 \otimes \rho(\d_N + \ad \d_N)v - \d_N \otimes v),\nonumber
\end{align}
where $v \in R$ and $c$ denotes the central element in $\csp(\dd_0, \di \theta) = \sp(\dd_0, \di\theta) \oplus \kk c$, defines a Lie pseudoalgebra action of $L = \Kd = He$ on $H \otimes R$, that we will denote\footnote{This is denoted $\V(R)= \V(\Pi, U)$ in \cite[Definition 5.3]{BDK2}.} by $\VK(R) = \VK(\Pi, U)$. The $\Kd$-tensor module $\VK(\Pi, U)$ is irreducible unless either the $\csp(\dd_0, \di \theta)$-action on $U$ is trivial; or $U$ is the $p$-th fundamental representation of $\sp(\dd_0, \di \theta)$ and $c$ acts via scalar multiplication by either $p$ or $N+1-p$. Whenever $\VK(\Pi, U)$ is reducible, it has a unique maximal $\Kd$-submodule yielding an irreducible $\Kd$-module, unless when $U$ is the trivial $\sp(\dd_0, \di\theta)$-module, in which case the $\Kd$-action on the quotient is trivial.

\subsection{Tensor modules for $\Hd$}\label{hdtm}

As $\chi$ is a $1$-cocycle on $\dd$, we set $\overline{\d} = \d - \chi(\d)$. Then $\d \mapsto \overline{\d}$ extends to an algebra automorphism of the universal enveloping algebra $H = \ue(\dd)$.

Here $\dd_+ = \dd + \kk c$ is the abelian extension of $\dd$ corresponding to the $2$-cocycle $\omega$ with values in $\kk_\chi$, and $\L^0 \simeq \spdd \simeq \sp_N$. For each choice of $\d \in \dd$, we denote by $\ad_\chi \d\in \gld$ the map $x \mapsto [\d, x] + \chi(x)\d$; we also set\footnote{This is denoted $\ad^\sp \d$ in \cite{BDK3}.} $\ad_\chi^\sp(\d) = \pi^\sp(\ad_\chi \d)$. If $R = (\Pi_+ \boxtimes U, \rho)$ is a finite-dimensional irreducible representation of $\dd_+ \oplus \spdd$, then
\begin{align}\label{hdtensoraction}
e * (1 \otimes v) = \sum_{i,j = 1}^{N} (\overline{\d_i \d_j} \otimes 1) & \otimes_H (1 \otimes \rho(f^{ij})v)\\
- \sum_{k = 1}^N (\overline{\d_k} \otimes 1) & \otimes_H (1 \otimes \rho(\d^k + \ad_\chi^\sp(\d^k))v - \d^k \otimes v)\nonumber\\
+ (1 \otimes 1) & \otimes_H (1 \otimes \rho(c)v) \nonumber
\end{align}
defines a Lie pseudoalgebra representation of $\Hd = He$ on $H \otimes R$, that we denote\footnote{This is denoted $\V(R) = \V(\Pi_+, U)$ in \cite[Definition 6.2]{BDK3}.} by $\VH(R) = \VH(\Pi_+, U)$. The tensor module $\VH(\Pi_+, U)$ is irreducible unless:
\begin{itemize}
\item[---]
either $U$ is trivial and $\rho(c) = 0$;
\item[---]
or $U$ is one of the fundamental representations of $\spdd$.
\end{itemize}
When $\VH(\Pi_+, U)$ is reducible, it has a unique maximal submodule if $\rho(c) = 0$, when the corresponding quotient has a nontrivial $\Hd$-action unless $U$ is the trivial $\spdd$-module. When instead $\rho(c) \neq 0$, each reducible tensor modules decomposes into the direct sum of its two irreducible maximal submodules.


\section{The current functor}\label{scurrent}

Let $H \subset H'$ be our usual Hopf algebras. One may use the inclusion homomorphism $\iota: H \to H'$ to define a scalar extension functor $H' \otimes_H$ which associates with each left $H$-module $M$ the corresponding left $H'$-module $H' \otimes_H M$. As $H'$ is free as a right $H$-module, the above functor is exact.

When $L$ is an $H$-Lie pseudoalgebra, the associated left $H'$-module $H' \otimes_H L$ may be endowed with a corresponding $H'$-Lie pseudoalgebra structure, which is uniquely determined by setting
$$[1 \otimes_H a * 1 \otimes_H b] = \sum_i (f_i \otimes g_i) \otimes_{H'} (1 \otimes _H c^i),$$
whenever $a, b \in L$ and $[a * b] = \sum_i (f_i \otimes g_i) \otimes_H c^i$. This Lie $H'$-pseudoalgebra is usually denoted by $\Cur_H^{H'} L$ and called {\em current Lie pseudoalgebra} of $L$. It follows from the structure theory developed in \cite{BDK} that $\Cur_H^{H'} L$ is simple whenever $L$ is simple, and that all finite simple Lie pseudoalgebras are obtained in this way from one of the {\em primitive} Lie pseudoalgebras listed above in Section \ref{sprimitive}.

The current functor $H' \otimes_H$ may also be applied to $L$-modules, and similarly yields Lie pseudoalgebra representations of $\Cur_H^{H'} L$, as it will be made explicit in Section \ref{scurreps} below. We will later see that irreducibility of modules behaves well with respect to the current functor.

\begin{remark}
Recall that the injection $\iota: H \to H'$ is pure, so that
$$M\ni m \mapsto 1 \otimes_H m \in \Cur_H^{H'} M$$
is always injective. In other words, every left $H$-module $M$ embeds $H$-linearly into its current module $\Cur_H^{H'}M$
\end{remark}

\subsection{Current simple Lie pseudoalgebras}

Let $L$ be a Lie pseudoalgebra over $H$, and $L' = \Cur_H^{H'} L := H' \otimes_H L$ its current pseudoalgebra over $H'$. Then $\L = X \otimes_H L$ is the annihilation algebra of $L$ and
$$\L' = X' \otimes_{H'}(H' \otimes_H L) \simeq X' \otimes_H L $$ is the annihilation algebra of $L'$; one has therefore a natural surjection
$$\iota^* \otimes_H \id_L : \L' \simeq X' \otimes_H L \to X \otimes_H L \simeq \L.$$

Recall that the Lie bracket \eqref{annihbracket} of a Lie pseudoalgebra over $H$ is obtained by rephrasing the Lie pseudobracket in terms of the right action of $H$ on its dual $X$. As the right $\dd$-action on $X \subset X'$ coincides with the natural right $\dd$-action on $X$, then the map
$$\L \simeq X \otimes_H L \to X' \otimes_H L \simeq \L'$$
is a continuous Lie algebra homomorphism and
provides a(n injective) splitting $\varsigma \otimes_H \id_L$ to $\iota^* \otimes_H \id_L$. Notice that $X' = \O\otimes X$ and we may use \prref{dactonxprime}\,{\em (ii)} in order to conclude that the Lie bracket on $\L'$ extends $\O$-linearly the Lie bracket on the subalgebra $\L \subset \L'$.

\begin{lemma}\label{trivialL}
Let $L$ be a primitive simple Lie pseudoalgebra of vector fields, and denote by
$\L^0$ denote the reductive Lie subalgebra of the annihilation algebra $\L$ consisting of degree $0$ element according to the grading recalled in Section \ref{nonontrivial}. Then the only trivial summands for the adjoint action of $\L^0$ on $\L$ are
\begin{itemize}
\item[---] $\kk \E$ when $L = \Wd$;
\item[---] the center $Z(\L)$ when $L = \Hd$;
\item[---] $\kk \E'$ when $L = \Kd$,
\end{itemize}
whereas there is no such trivial summand when $L = \Sd$. 
\end{lemma}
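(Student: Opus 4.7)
The plan is to exploit the $\ZZ$-grading $\L = \prod_i \L^i$. Since $[\L^0, \L^i] \subset \L^i$, the adjoint action of $\L^0$ preserves each graded component, so trivial $\L^0$-isotypic summands can be localised inside each $\L^i$ separately. I will split the four cases into two groups, depending on whether the grading is implemented by an element of $\L^0$ or by an outer derivation.

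For types W and K, the key observation is that the (contact) Euler element $\E$, resp.\ $\E'$, lies in $\L^0$ and is central there: $\E$ spans the centre of $\gl_N = \W^0$, while $\E'$ spans the $\kk$-summand of $\csp_{N-1} = \sp_{N-1} \oplus \kk \E'$. Since $[\E, x] = ix$ (resp.\ $[\E', x] = ix$) for every $x \in \L^i$, any trivial $\L^0$-summand must be annihilated by this central element, forcing $i = 0$. Within $\L^0$, the trivial adjoint summand coincides with the centre of $\L^0$, yielding $\kk\E$ in type W and $\kk\E'$ in type K.

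For types S and H, the semisimple part $\L^0 = \sl_N$, resp.\ $\sp_N$, is simple, so degree $0$ contributes no trivial summand, and the remaining degrees have to be analysed through the standard realisations of the corresponding Cartan type Lie algebras. In type S, the $\gl_N$-equivariant divergence sequence
$$0 \to \S^i \to S^{i+1}(V^*) \tp V \xrightarrow{\Div} S^i(V^*) \to 0$$
identifies $\S^i$, for $i \geq 0$, with the irreducible $\sl_N$-module of nonzero highest weight $\omega_1 + (i+1)\omega_{N-1}$; together with $\S^{-1} \simeq V$, this shows that no $\S^i$ contains a trivial $\sl_N$-summand. In type H, the Poisson realisation $\P \simeq X$ yields $\P^i \simeq S^{i+2}(V^*)$ as $\sp_N$-module, irreducible of highest weight $(i+2)\omega_1$; this is trivial exactly for $i = -2$, producing the one-dimensional summand $Z(\P)$.

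The main obstacle is precisely this case-by-case identification of $\L^i$ as an irreducible $\L^0$-module in types S and H, since no central grading element is available to reduce the problem to $\L^0$ alone. These identifications are classical, however, and follow immediately from the standard Cartan type descriptions of $S_N$ and $P_N$.
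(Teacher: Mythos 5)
Your proof is correct and follows essentially the same strategy as the paper: in types W and K the grading element $\E$ (resp.\ $\E'$) lies in the centre of $\L^0$, so any trivial summand is forced into degree $0$, where only $Z(\L^0)$ survives; in types S and H one uses that each graded component is an irreducible $\L^0$-module, trivial only for $\P^{-2} = Z(\P)$. The one difference is that the paper obtains the latter irreducibility statements by citing \cite[Lemma 6.7]{BDK}, whereas you re-derive them from the classical divergence sequence for $S_N$ and the Poisson realisation of $P_N$ --- both routes are sound, and your identifications of the highest weights are accurate.
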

\begin{proof}
When $L = \Wd$, resp. $\Kd$, then the grading on $\L$ is given by the eigenspace decomposition of the inner derivation $\ad \E$, resp. $\ad \E'$. Consequently, all trivial summands for the adjoint action of $\L^0$ must lie in degree $0$. As $\L^0 \simeq \gl_N = \sl_N \oplus \kk$, resp. $\csp_{N-1} =\sp_{N+1} \oplus \kk$, the adjoint action of $\L^0$ is trivial on $Z(\L^0)$, that is on $\kk \E$, resp. $\kk \E'$.

When $L = \Hd$, \cite[Lemma 6.7 (iii)]{BDK} shows that each graded summand $\L^n \subset \L$ is an irreducible representation of $\L^0$, which is only trivial when $n = -2$. Notice that $\L^{-2}$ coincides with the center $Z(\L)$, as the $\L^0$ action on $Z(\L)$ is trivial and $\dim Z(\L) = \dim \L^{-2}$.

Absence of trivial $\L^0$-summand in the $L = \Sd$ case follows from \cite[Lemma 6.7 (ii)]{BDK}.
\end{proof}

As we have identified $\L$ with a subalgebra of $\L'$, $\L^0 \subset \L$ also arises as a subalgebra of the annihilation algebra $\L'$ of the current Lie pseudoalgebra $L' = \Cur_H^{H'} L$. In this setting
\begin{proposition}\label{onlytrivial}

The trivial isotypical component corresponding to the adjoint action of $\L^0 \subset \L \subset \L'$ on $\L'= \O \,{\widehat \otimes}\,\L$ is
\begin{itemize}
\item[---] $\O \otimes \kk\E$, where $\E \in \W$ is the Euler vector field, when $L = \Wd$; moreover $\m \otimes \kk\E \subset [\O \,{\widehat \otimes}\,\W^{>0}, \m \,{\widehat \otimes}\, \W^\perp]$.
\item[---] $(0)$ when $L = \Sd$.
\item[---] $\O \otimes \kk\E'$, where $\E' \in \K$ is the contact Euler vector field, when $L = \Kd$; moreover $\m \otimes \kk\E' \subset [\O \,{\widehat \otimes}\,\K^{>0}, \m \,{\widehat \otimes}\, \K^\perp]$.
\item[---] $\O \otimes Z(\P)$ when $L = \Hd$; moreover $\m^2 \otimes Z(\P) \subset [\m \,{\widehat \otimes}\, \P^\perp, \m \,{\widehat \otimes}\, \P^\perp]$.
\end{itemize}
\end{proposition}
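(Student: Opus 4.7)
The plan is to first establish that the bracket of $\L'$ is $\O$-bilinear, namely $[\phi \alpha, \psi \beta]_{\L'} = \phi\psi\,[\alpha,\beta]_\L$ for $\phi, \psi \in \O$ and $\alpha, \beta \in \L$. This will follow by directly computing \eqref{annihbracket} using $X' \simeq \O \,\widehat{\otimes}\, X$ together with the $\O$-linearity of the right $\dd$-action on $X'$ from \prref{dactonxprime}\,{\em (ii)}. As an immediate consequence, $\L^0 \subset \L \subset \L'$ acts on $\L' = \O \,\widehat{\otimes}\, \L$ only through the second tensor factor, and its trivial isotypical component equals $\O \,\widehat{\otimes}\, \L^{\L^0}$. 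Applying \leref{trivialL} then yields the four descriptions $\O \otimes \kk\E$, $(0)$, $\O \otimes \kk\E'$ and $\O \otimes Z(\P)$ asserted in the proposition.

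The inclusion claim in the $\Wd$ and $\Kd$ cases reduces, via $\O$-bilinearity, to showing $\E \in [\W^{>0}, \W^\perp]$ and $\E' \in [\K^{>0}, \K^\perp]$ respectively: writing $\E = \sum_k [u_k, v_k]$ with $u_k \in \W^{>0}$ and $v_k \in \W^\perp$ yields $f \otimes \E = \sum_k [1 \otimes u_k, f \otimes v_k] \in [\O \,\widehat{\otimes}\, \W^{>0}, \m \,\widehat{\otimes}\, \W^\perp]$ for any $f \in \m$. I verify $\E \in [\W^{>0}, \W^\perp]$ by starting from the identity $\W = [\W^\perp, \W^\perp]$ and decomposing $\W^\perp = \W^{>0} \oplus (\W^\perp \cap \W^{\leq 0})$: since $\W^\perp \cap \W^{\leq 0} = \sl_N \oplus \W^{-1}$, and $[\sl_N + \W^{-1}, \sl_N + \W^{-1}] \subset \sl_N + \W^{-1}$ contains no $\kk\E$-component, any $\kk\E$-contribution in $[\W^\perp, \W^\perp]$ must arise from $[\W^{>0}, \W^\perp]$. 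The $\Kd$ case proceeds in exactly the same way, with $\K^\perp \cap \K^{\leq 0} = \sp_{N-1} \oplus \K^{-1} \oplus \K^{-2}$ whose self-bracket lies in $\sp_{N-1} + \K^{-1} + \K^{-2}$ and again avoids $\kk\E'$.

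The $\Hd$ case requires a different bracket degree pattern because $Z(\P) = \P^{-2}$ sits at the bottom of the grading and cannot arise from $[\P^{>0}, \P^\perp]$; this is precisely why both factors of the displayed bracket must lie in $\m \,\widehat{\otimes}\, \P^\perp$. The key observations are $[\P^{-1}, \P^{-1}] = \P^{-2} = Z(\P)$ (the symplectic Poisson bracket of linear Hamiltonians pairs them into constants) and $\P^{-1} \subset \P^\perp$ (since $\sp_N = \L^0$ acts on $\P^{-1}$ by its nontrivial defining representation). Hence for $z \in Z(\P)$, writing $z = \sum_k [a_k, b_k]$ with $a_k, b_k \in \P^{-1}$ and expressing any $h \in \m^2$ as a (topological) sum $\sum_\alpha f_\alpha g_\alpha$ with $f_\alpha, g_\alpha \in \m$, $\O$-bilinearity yields
$$h \otimes z = \sum_{\alpha,k} [f_\alpha \otimes a_k, g_\alpha \otimes b_k] \in [\m \,\widehat{\otimes}\, \P^\perp, \m \,\widehat{\otimes}\, \P^\perp].$$
The main subtle step is the degree-and-isotype bookkeeping in the $\Wd$ and $\Kd$ cases, needed to confirm that the trivial isotypical component of $[\L^\perp, \L^\perp]$ is necessarily produced via brackets involving $\L^{>0}$ rather than solely from within $\L^{\leq 0}$; once this is in hand, the remainder is a direct application of $\O$-bilinearity and continuity.
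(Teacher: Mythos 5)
Your proof is correct, and its overall architecture matches the paper's: $\O$-bilinearity of the bracket on $\L' = \O\,\widehat{\otimes}\,\L$ (which the paper records just before \leref{trivialL} via \prref{dactonxprime}\,(ii)) reduces both the isotypic decomposition and the bracket inclusions to statements about $\L$ alone, and the first of these is settled by \leref{trivialL} exactly as you do. Where you diverge is in how the Lie-algebra-level inclusions are verified. For $\Wd$ and $\Kd$ the paper simply cites the last claims of Lemmas 6.5 and 6.7 of [BDK] (perfectness of the subalgebra called $\p$ there) to get $\E\in[\L^{>0},\L^\perp]$, whereas you rederive this from $\L=[\L^\perp,\L^\perp]$ by splitting $\L^\perp=\L^{>0}\oplus(\L^\perp\cap\L^{\le 0})$ and observing that the self-bracket of the nonpositive part stays inside $\sl_N\oplus\W^{-1}$ (resp.\ $\sp_{N-1}\oplus\K^{-1}\oplus\K^{-2}$), hence misses $\kk\E$; since $[\L^{>0},\L^\perp]$ is graded and $\L^0$-stable, projecting onto the trivial degree-zero isotypic piece gives the claim. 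For $\Hd$ the paper deduces $Z(\P)\subset[\P^\perp,\P^\perp]$ from perfectness of $\P$ together with the vector-space decomposition $\P=Z(\P)\oplus\P^\perp$, while you use the sharper and more explicit identity $[\P^{-1},\P^{-1}]=\P^{-2}=Z(\P)$, which lets you write $h\otimes z$ with both bracket entries in $\m\,\widehat{\otimes}\,\P^{-1}$ after factoring $h\in\m^2$ as a convergent sum of products of elements of $\m$. Your route is more self-contained (it needs only the perfectness statement $\L=[\L^\perp,\L^\perp]$ already recorded in Section 4.2 and elementary facts about the gradings of $W_N$, $K_N$, $P_N$), at the cost of some bookkeeping; the paper's is shorter but leans on external lemmas from [BDK]. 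Both are sound.
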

\begin{proof}
The Lie bracket on $\L' = \O \,{\widehat \otimes}\, \L$ is $\O$-bilinear, so that if $X \subset \L$ is a non-trivial irreducible $\L^0$-summand, then $\O \,{\widehat \otimes}\, X$ is $X$-isotypical with respect to the action of $\L^0 \subset \L \subset \L'$. Then the description of the trivial isotypical component follow from Lemma \ref{trivialL}.


As for $\m \otimes \kk \E \subset [\O \,{\widehat \otimes}\,\L^{>0}, \m \,{\widehat \otimes}\, \L^\perp]$ when $L = \Wd$, resp. $\Kd$, it suffices to prove that $\E$, resp. $\E'$, lies in $[\L^{>0}, \L^\perp]$. This follows from the last claim in each of \cite[Lemmas 6.5, 6.7]{BDK}, as the semisimple Lie subalgebra of $\L$ denoted there by $\p$ equals its derived subalgebra.

Lastly, when $L = \Hd$, $\L = \P \simeq P_N$ is an irreducible central extension of $\P/Z(\P) = \H \simeq H_N$, so that $\P$ equals its derived subalgebra. However \cite[Lemma 6.7]{BDK} states that $\P^\perp = \P^{\geq -1}$ is a complement, as vector spaces, to the center $Z(\P)$, so that $[\P^\perp, \P^\perp]$ contains $Z(\P)$. The claim $\m^2 \otimes Z(\P) \subset [\m \,{\widehat \otimes}\, \P^\perp, \m \,{\widehat \otimes}\, \P^\perp]$ now follows by $\O$-bilinearity of the Lie bracket of $\L' \simeq \O \,{\widehat \otimes}\, \P$.
\end{proof}

A further description of $\L'$, which is valid for all current Lie pseudoalgebra and not only for simple ones, is in order. We have set up an explicit splitting $\varsigma: X \to X'$ to $\iota^*: X' \to X$. As $I := \ker \iota^*$, we obtain the direct sum decomposition $X' = \varsigma(X) \oplus I$ as right $H$-modules. Then correspondingly
$$\L' = (X+I) \otimes_H L = X \otimes_H L \oplus I \otimes_H L \simeq \L + \I,$$
where we have set $\I := I \otimes_H L$. Under the identification $\L' \simeq \O \,{\widehat \otimes}\, \L$, one has $\I = \m \,{\widehat \otimes}\, \L$ so that this reads as $\L' \simeq \kk \,{\widehat \otimes}\, \L + \m \,{\widehat \otimes}\, \L = \L + \m \,{\widehat \otimes}\, \L$.

In other words, $\L'= \L \sd \I$ is the semidirect sum of the subalgebra $\L$ with the ideal $\I$, yielding the isomorphism $\L'/\I \simeq \L$ of topological Lie algebras. By Proposition \ref{dactonxprime}\,{\em (iii)}, we argue that the above isomorphism generalizes to extended annihilation algebras $\widetilde \L' /\I \simeq \widetilde \L$.

\subsection{Current representations}\label{scurreps}

Let $L$ be a Lie pseudoalgebra over $H$ and $*: L \otimes V \to (H \otimes H) \otimes_H V$ be a pseudoaction of $L$ on the left $H$-module $V$. 
If $H \subset H'$, then we may construct both the current Lie pseudoalgebra $L' = \Cur_H^{H'} L$ and the corresponding current $H'$-module $V' = \Cur_H^{H'} V := H' \otimes_H V$.
\begin{proposition}\label{currcorrespond}
There exists a unique pseudoaction of $L'$ on $V'$ extending $H'$-bilinearly 
\begin{equation}\label{curract}
(1 \otimes_H a) * (1 \otimes v) = \sum_i (f_i \otimes g_i) \otimes_{H'} (1 \otimes_H v^i),
\end{equation}
where $a \in L, v \in V, f_i, g_i \in H$ and
\begin{equation}\label{act}
a*v = \sum_i (f_i \otimes g_i) \otimes_H v^i.
\end{equation}
Then \eqref{curract} defines a Lie pseudoalgebra representation of $L'$ on $V'$ if and only if \eqref{act} gives a Lie pseudoalgebra representation of $L$ on $V$.
\end{proposition}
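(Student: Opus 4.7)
The proposition splits naturally into two assertions: existence and uniqueness of the pseudoaction, and equivalence of the two representation axioms. For the first, I would start from the observation that the inclusion $\iota \colon H \hookrightarrow H'$ and the canonical map $V \hookrightarrow V' = H' \otimes_H V$ (which is split injective since $H'$ is free as a right $H$-module, as noted in the remark after the PBW basis) induce by functoriality a well-defined $\kk$-linear map
$$ \phi \colon (H \otimes H) \otimes_H V \longrightarrow (H' \otimes H') \otimes_{H'} V'. $$
Then formula \eqref{curract} may be rewritten intrinsically as $(1 \otimes_H a) * (1 \otimes_H v) := \phi(a * v)$, which is manifestly independent of the chosen representation of $a*v \in (H \otimes H) \otimes_H V$. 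The sought pseudoaction must extend this $H' \otimes H'$-bilinearly, so uniqueness is automatic, as $L' \otimes V'$ is generated as an $H' \otimes H'$-module by elements of the form $(1 \otimes_H a) \otimes (1 \otimes_H v)$. For existence, one checks that the compatibility $(1 \otimes_H ha) * (1 \otimes_H kv) = (h \otimes k) \cdot \phi(a*v)$, required for the extension to be well-defined on the quotient $L' \otimes V'$, follows directly from the $H \otimes H$-bilinearity of the original pseudoaction and the naturality of $\phi$.

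For the second assertion, the key input is injectivity of the analogous map
$$ \Phi \colon (H \otimes H \otimes H) \otimes_H V \hookrightarrow (H' \otimes H' \otimes H') \otimes_{H'} V', $$
which holds because $H'$ is free both as a left and as a right $H$-module, and $V \hookrightarrow V'$ is pure. By iterated use of the defining formula \eqref{curract}, each of the three terms appearing in the representation axiom \eqref{lieaction} for the current pseudoaction, when evaluated on $a, b \in 1 \otimes_H L$ and $v \in 1 \otimes_H V$, is exactly the $\Phi$-image of the corresponding term in the $L$-axiom for $V$. Hence the original axiom for $L$ on $V$ holds if and only if its $\Phi$-image vanishes, which is precisely the $L'$-axiom evaluated on the distinguished generators. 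Invoking $H' \otimes H' \otimes H'$-bilinearity of every term together with the fact that $L' \otimes L' \otimes V'$ is generated over $H'^{\otimes 3}$ by such triples, this restricted identity upgrades to the full representation axiom for $L'$ on $V'$.

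The main bookkeeping obstacle is the verification, at the end of the second part, that the nested pseudoproducts $[1 \otimes_H a * 1 \otimes_H b] * (1 \otimes_H v)$ and $(1 \otimes_H a) * ((1 \otimes_H b) * (1 \otimes_H v))$ in $(H'^{\otimes 3}) \otimes_{H'} V'$ genuinely coincide with the $\Phi$-images of $[a*b] * v$ and $a * (b * v)$ computed inside $(H^{\otimes 3}) \otimes_H V$. This requires applying \eqref{curract} twice and tracking the straightening carefully across the switch from $\otimes_H$ to $\otimes_{H'}$, but no new conceptual ingredient is involved beyond the definition of the current functor on pseudoalgebras.
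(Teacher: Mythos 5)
Your proposal is correct and follows essentially the same route as the paper: the forward direction is a routine check, and the converse rests on the observation that the natural map $(H^{\otimes 3})\otimes_H V \to (H'^{\otimes 3})\otimes_{H'} V'$ is injective (the paper phrases this as the straightening isomorphism of Corollary \ref{pretechnical} for $V'$ restricting to the one for $V$), so that verifying the axiom on the generators $1\otimes_H a$, $1\otimes_H v$ is equivalent to verifying it for the original action, after which $H'$-multilinearity upgrades the identity to all of $L'\otimes L'\otimes V'$. The extra detail you supply on existence and uniqueness of the extended pseudoaction is consistent with what the paper leaves implicit.
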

\begin{proof}
The fact that if $V$ is a representation of $L$ then $V'$ is a representation of $L'$ is a trivial check. The converse depends on the fact that 
$$(H' \otimes H' \otimes H') \otimes_{H'} V' \simeq (H' \otimes H' \otimes H') \otimes_{H'} (H' \otimes_H V) \simeq (H' \otimes H' \otimes H') \otimes_{H} V,$$
and the linear isomorphism
$$(H' \otimes H' \otimes H') \otimes_{H'} V' \mapsto H' \otimes H' \otimes V'$$ 
described in Corollary \ref{pretechnical},
under such identification, restricts on $(H \otimes H \otimes H) \otimes_{H'} V$ to the analogous isomorphism
$$(H \otimes H \otimes H) \otimes_H V \to H \otimes H \otimes V$$
for $V$, so that checking that \eqref{curract} satisfies the axioms of a Lie pseudoalgebra representation --- on the set of $H'$-linear generators of the form $1 \otimes_H a, a \in L,$ and $1 \otimes_H v, v\in V$ --- is the same as checking that \eqref{act} does. However the axioms for an $H'$-Lie pseudoalgebra representations are invariant under $H'$-linear combination, and we are done.
\end{proof}

\begin{remark}\label{curroffree}
In simpler words, if $V$ is a Lie pseudoalgebra representation of $L$, the $\Cur_H^{H'}L$-module $\Cur_H^{H'}V$ is obtained via extending by $H'$-bilinearity  the pseudoaction \eqref{curract} obtained by replacing $\otimes_H$ with $\otimes_{H'}$ in each $a*v$, where $a \in L, v \in V$.
\end{remark}

If we choose, as usual, a basis $\{\d_1, \dots, \d_r, \d_{r+1}, \dots, \d_{N+r}\}$ of $\dd'$ in such a way that $\{\d_{r+1}, \dots, \d_{N+r}\}$ is a basis of $\dd \subset \dd'$, then we may use the PBW basis of $H'$ to uniquely express each element $v \in \Cur_H^{H'} V$ as
$$v = \sum_{K \in \ZZ^r\times \{0\}\subset \ZZ^{N+r}} 
\d^{(K)} \otimes_H v_K,$$
as the $\dd$-components in elements of the PBW basis may be moved on the right of $\otimes_H$ and $H'$ is a free right $H$-module generated by elements $\d^{(K)}, 
 K \in \ZZ^r\times \{0\}\subset \ZZ^{N+r}$. Notice that coefficients $v_K\in V$ depend on the choice of the basis, but their $H$-linear span only depends on $v$.

If $M \subset V'$ is an $L'$-submodule, denote now by $M_0$ the $H$-submodule of $V$ generated by all coefficients $m_K$ of elements $m \in M$.
\begin{lemma}\label{coeffM}
$M_0$ is an $L$-submodule of $V$ satisfying $H' \otimes_H (L\cdot M_0) \subset M \subset H' \otimes_H M_0$.
\end{lemma}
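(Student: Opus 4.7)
The plan is to establish the two inclusions separately. The inclusion $M \subset H' \otimes_H M_0$ is immediate from the definition of $M_0$: every $m \in M$ expands as $\sum_K \d^{(K)} \otimes_H m_K$ with $m_K \in M_0$. The substance of the lemma therefore lies in the stronger assertion $H' \otimes_H (L \cdot M_0) \subset M$, which will automatically give $L \cdot M_0 \subset M_0$ (hence the $L$-submodule status of $M_0$), because any $n \in L \cdot M_0$ embeds as $1 \otimes_H n \in H' \otimes_H (L \cdot M_0) \subset M$, exhibiting $n$ as a coefficient of an element of $M$.

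For the main inclusion, pick $a \in L$ and $m = \sum_{K \in R} \d^{(K)} \otimes_H m_K \in M$, where $R = \NN^r \times \{0\}$ and $S = \{0\} \times \NN^N$ partition the PBW indices of $H'$ relative to our chosen basis of $\dd'$. For each $K$, left-straighten $a * m_K$ in $(H \otimes H) \otimes_H V$ as $\sum_{L \in S} (\d^{(L)} \otimes 1) \otimes_H w_{K, L}$ with $w_{K, L} \in V$. By Remark \ref{curroffree}, the identity $(1 \otimes_H a) * (1 \otimes_H m_K) = \sum_L (\d^{(L)} \otimes 1) \otimes_{H'} (1 \otimes_H w_{K, L})$ holds in $(H' \otimes H') \otimes_{H'} V'$, and multiplying by $(1 \otimes \d^{(K)})$ and summing over $K$ via $H'$-bilinearity produces
$$(1 \otimes_H a) * m = \sum_{K \in R,\, L \in S} (\d^{(L)} \otimes \d^{(K)}) \otimes_{H'} (1 \otimes_H w_{K, L}).$$
Since $M$ is $L'$-stable, this element lies in $(H' \otimes H') \otimes_{H'} M$, and Lemma \ref{RS} --- applied to a reordering of the basis of $\dd'$ that places the $N$ elements of $\dd$ before the remaining $r$, as permitted by Remark \ref{RSremark} --- forces $1 \otimes_H w_{K, L} \in M$ for every pair $(K, L)$. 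Setting $M' = \{v \in V \mid 1 \otimes_H v \in M\}$, which is an $H$-submodule of $V$ because $1 \otimes_H (hv) = h \cdot (1 \otimes_H v)$ for $h \in H$, we have shown $w_{K, L} \in M'$, and Corollary \ref{technical} upgrades this to $a * m_K \in (H \otimes H) \otimes_H M'$ for each $K$. By $H$-bilinearity of the pseudoaction, $a * n \in (H \otimes H) \otimes_H M'$ for all $n \in M_0$, i.e., $L \cdot M_0 \subset M'$; since $M' \subset M_0$ trivially and $M$ is $H'$-stable, we conclude $L \cdot M_0 \subset M_0$ and $H' \otimes_H (L \cdot M_0) \subset H' \otimes_H M' \subset M$.

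The main obstacle is the bookkeeping step that identifies the partial-straightening coefficients of $(1 \otimes_H a) * m$ in $(H' \otimes H') \otimes_{H'} V'$ with the left-straightening coefficients of $a * m_K$ in $(H \otimes H) \otimes_H V$; this is what allows Lemma \ref{RS} to transfer the $L'$-invariance of $M$ into coefficient-wise membership. Once this identification is in place via Remark \ref{curroffree} and $H'$-bilinearity, the remainder of the argument is formal.
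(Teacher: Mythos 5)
Your proof is correct and takes essentially the same route as the paper's: expand $m$ in the PBW basis of $H'$ over $H$, compute $(1\otimes_H a)*m$ via $H'$-bilinearity and Remark \ref{curroffree}, and apply Lemma \ref{RS} together with Remark \ref{RSremark} to force coefficient-wise membership in $M$. Your auxiliary module $M'=\{v\in V \mid 1\otimes_H v\in M\}$ merely makes explicit the $H$-bilinearity step that the paper leaves implicit when passing from the individual coefficients to the inclusion $H'\otimes_H(L\cdot M_0)\subset M$.
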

\begin{proof}
Every $m \in M$ is an $H'$-linear combination of its coefficients, thus $M \subset H' \otimes_H M_0$. As for the other inclusion, express $m \in M$ in the form
$$m = \sum_{L \in \ZZ^r\times {0}\subset \ZZ^{N+r}} \d^{(L)} \otimes_H m_L,$$  
where $L = (l_1, \dots, l_r, 0, \dots, 0) \in \ZZ^{N+r}$ and $m_L \in V$.
If $a \in L$ satisfies
$$a*m_{K} = \sum_{K\in 0 \times \ZZ^{N} \subset \ZZ^{N+r}} (\d^{(K)}\otimes 1) \otimes_H u_{K+L},$$
where, as before, $K = (0, \dots, 0, k_{r+1}, \dots, k_{N+r})$, we want to show that all elements of the form $1 \otimes_H u_{K+L}$ lie in $M$. This follows immediately by computing
\begin{equation*}
(1 \otimes a) * m = \sum_{K,L} (\d^{(K)} \otimes \d^{(L)}) \otimes_{H'} (1 \otimes_H u_{K+L}),
\end{equation*}
and using Lemma \ref{RS}, along with Remark \ref{RSremark}.

This shows that $1 \otimes_H (L\cdot M_0) \subset M$, so that also $H' \otimes_H (L\cdot M_0) \subset M$. However $1 \otimes_H (L \cdot M_0) \subset M$ implies $L \cdot M_0 \subset M_0$, so that $M_0 \subset V$ is an $L$-submodule.
\end{proof}

\begin{corollary}\label{currirrisirr}
Let $V$ be an irreducible $L$-module with a non-trivial $L$-action. Then $V' = \Cur_H^{H'} V$ is an irreducible $L' = \Cur_H^{H'} L$-module.
\end{corollary}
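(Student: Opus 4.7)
The plan is to reduce the problem to Lemma \ref{coeffM}, which packages almost all of the work: it associates to every $L'$-submodule $M \subset V'$ an $L$-submodule $M_0 \subset V$ with the sandwich property $H' \otimes_H (L \cdot M_0) \subset M \subset H' \otimes_H M_0$. From there, irreducibility of $V$ should squeeze $M$ between two copies of $V'$ in a way that forces $M = V'$.

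Concretely, I would take a non-zero $L'$-submodule $M \subset V'$ and apply Lemma \ref{coeffM}. Since $M \ne 0$, the associated $L$-submodule $M_0 \subset V$ is non-zero (some coefficient of some element of $M$ is non-zero). Irreducibility of $V$ then forces $M_0 = V$. The irreducibility hypothesis also built in the condition that $V$ does not have a trivial $L$-action, so $L \cdot V$ is a non-zero $L$-submodule of $V$, and irreducibility again forces $L \cdot V = V$. Feeding this back into the sandwich inequality gives $H' \otimes_H V \subset M \subset H' \otimes_H V$, i.e.\ $M = V'$.

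Separately, I would check that the $L'$-action on $V'$ is non-trivial, since this is a second requirement buried in the definition of irreducibility. Pick $a \in L$, $v \in V$ with $a*v \ne 0$ in $(H \otimes H) \otimes_H V$, which exists because the $L$-action on $V$ is non-trivial. By Remark \ref{curroffree}, the corresponding element $(1 \otimes_H a) * (1 \otimes_H v) \in (H' \otimes H') \otimes_{H'} V'$ is obtained from $a*v$ by replacing $\otimes_H$ with $\otimes_{H'}$; left-straightening via Proposition \ref{straightprop} (applied both over $H$ and over $H'$) shows that if this vanished then already $a*v$ would vanish in $(H \otimes H) \otimes_H V$, since the $H$-linearly independent coefficients appearing in the straightened form of $a*v$ remain $H'$-linearly independent in $H'$, and elements $1 \otimes_H w \in V'$ are non-zero whenever $w \in V$ is (because $H'$ is free, hence faithfully flat, as a right $H$-module).

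There is no real obstacle here: the combinatorial content has been absorbed into Lemma \ref{coeffM} and the straightening machinery. The only subtlety worth flagging is the non-triviality step, which implicitly uses purity of $H \hookrightarrow H'$ to transport non-vanishing assertions from $(H \otimes H) \otimes_H V$ into $(H' \otimes H') \otimes_{H'} V'$; this is exactly the purity recorded in the remark following the PBW basis.
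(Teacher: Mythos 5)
Your proof is correct and follows essentially the same route as the paper: both arguments hinge on Lemma \ref{coeffM} to squeeze a nonzero submodule $M$ between $H'\otimes_H(L\cdot M_0)$ and $H'\otimes_H M_0$ and then invoke irreducibility of $V$ twice (once for $M_0=V$, once for $L\cdot V=V$). Your additional verification that the $L'$-action on $V'$ is non-trivial, via straightening and purity of $H\hookrightarrow H'$, addresses a point the paper's proof leaves implicit, and is a sensible completion since non-triviality is part of the paper's definition of irreducibility.
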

\begin{proof}
Let $(0) \neq M \subset V'$ be a submodule, $M_0\subset V$ its coefficient submodule. Then $M_0$ is a nonzero $L$-submodule of $V$, so that $M_0 = V$. Thus, by Lemma \ref{coeffM}, $H' \otimes_H (L \cdot V) \subset M \subset H' \otimes_H V$. However, $L \cdot V \subset V$ is an $L$-submodule of $V$, which cannot equal $(0)$ as the $L$-action of $V$ is non-trivial. Then $L \cdot V = V$ and $M = H' \otimes_H V = V'$.
\end{proof}

\begin{remark}
Notice that if $M_0 \subset V$ is a maximal $L$-submodule such that the action of $L$ on $V/M_0$ is non-trivial, then $H' \otimes_H M_0$ is maximal in $\Cur_H^{H'} V = H' \otimes_H V$. Indeed, $\Cur_H^{H'} (V/M_0)$ is irreducible by the above corollary; as the functor $\Cur_H^{H'}$ is exact, it commutes with taking quotient.  Then $\Cur_H^{H'} (V/M_0) \simeq V'/(H' \otimes_H M_0)$, whence maximality of $H' \otimes_H M_0$ in $V'$.
\end{remark}

\begin{corollary}\label{maxstructure}
Let $M \subsetneq \Cur_H^{H'} V = V'$ be a maximal $L'$-submodule. Then
\begin{itemize}
\item[---] either $M_0 \subset V$ is a maximal $L$-submodule and $M = H' \otimes_H M_0$;
\item[---] or $M_0 = V$ and the action of $L'$ on $V'/M$ is trivial.
\end{itemize}
\end{corollary}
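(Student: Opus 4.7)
The plan is to split into two cases according to whether $M_0 = V$ or $M_0 \subsetneq V$, with Lemma \ref{coeffM} as the main input. The underlying fact used throughout will be that $H'$ is free as a right $H$-module by PBW, so the scalar extension functor $H' \otimes_H -$ is exact: it turns strict $H$-submodule inclusions into strict $H'$-submodule inclusions, and sends $L$-submodules of an $L$-module $V$ to $L'$-submodules of its current module $V'$, via the $H'$-bilinear extension of the pseudoaction described in Proposition \ref{currcorrespond}.

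In the case $M_0 \subsetneq V$, I would use Lemma \ref{coeffM} to sandwich $M \subset H' \otimes_H M_0 \subsetneq V'$, where both outer terms are $L'$-submodules by the observation above. Maximality of $M$ then forces $M = H' \otimes_H M_0$. To show that $M_0$ is itself maximal as an $L$-submodule of $V$, I would argue by contradiction: a strictly larger proper $L$-submodule $N \supsetneq M_0$ would, upon applying $H' \otimes_H -$, produce an $L'$-submodule strictly between $M = H' \otimes_H M_0$ and $V' = H' \otimes_H V$, violating maximality.

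In the case $M_0 = V$, Lemma \ref{coeffM} reads $H' \otimes_H (L \cdot V) \subset M$, and the main remaining step is to identify $L' \cdot V'$ with $H' \otimes_H (L \cdot V)$; this follows directly from the explicit form of the current pseudoaction in Proposition \ref{currcorrespond}, since the coefficients appearing in $(1 \otimes_H a) * (1 \otimes_H v)$ span precisely (the image in $V'$ of) $L \cdot V$, and the whole $L'$-pseudoaction on $V'$ is $H'$-bilinearly generated from these elements. Once this identification is made, $L' \cdot V' \subset M$ is exactly the statement that the induced $L'$-action on $V'/M$ is trivial. I do not expect a serious obstacle here: the whole argument is essentially bookkeeping around Lemma \ref{coeffM} and exactness of $H' \otimes_H -$, with the only mild care needed being the identification $L' \cdot V' = H' \otimes_H (L \cdot V)$, which is immediate from the definitions.
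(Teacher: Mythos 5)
Your proposal is correct and follows essentially the same route as the paper: Lemma \ref{coeffM} gives the sandwich $M \subset H' \otimes_H M_0$, purity/exactness of $H' \otimes_H -$ handles strictness and the maximality transfer, and the $M_0 = V$ case reduces to $H' \otimes_H (L\cdot V) \subset M$. The only cosmetic difference is in the last step, where you identify $L'\cdot V'$ with $H'\otimes_H(L\cdot V)$ directly, while the paper instead observes that $V'/M$ is a quotient of $\Cur_H^{H'}\bigl(V/(L\cdot V)\bigr)$, which has trivial action; both are immediate from Proposition \ref{currcorrespond}.
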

\begin{proof}
If $M_0 \subset V$ is a proper non-maximal $L-$submodule, then we may locate $M_0 \subsetneq M_1 \subsetneq V$, and $M \subset H' \otimes_H M_0 \subsetneq H' \otimes_H M_1 \subsetneq V'$, thus showing $M$ is not maximal.

Viceversa, assume that $M_0 \subsetneq V$ is a maximal $L-$submodule. Then $M \subset H' \otimes_H M_0 \subsetneq V$. By maximality of $M$, we get $M = H' \otimes_H M_0$. If instead $M_0 = V$, then $H' \otimes_H (L \cdot V) \subset M \subsetneq V'$. Then $L \cdot V \subsetneq V$, so that the action of $L$ on $V/L \cdot V$ is trivial. However, $V'/M$ is a quotient of $H' \otimes_H V/L \cdot V = \Cur_H^{H'} V/L \cdot V$ showing the action of $L'$ on $V'/M$ is trivial.
\end{proof}

Our goal in this paper is figuring out to what extent the only irreducible representations of $\Cur_H^{H'} L$ are obtained by taking currents of irreducible representations of $L$, when $L$ is a simple primitive Lie pseudoalgebra.

\section{An exceptional representation of $\Cur_H^{H'} \Hdzero$}\label{Hdexceptional}

Let $L' = \Cur_H^{H'}L$ where $L = \Hd = He$ satisfies
\begin{equation}
[e * e] = \sum_k (\overline{\d_k} \otimes \overline{\d^k}) \otimes_H e.
\end{equation}
Here $\overline{\d} = \d - \chi(\d)$, and we will be using the notation introduced in Sections \ref{shds} and \ref{hdtm}. Notice that $L' = H' \otimes_H He$ may (and will) be identified with $H'e$.

Now assume that $R = (\Pi_+ \boxtimes U, \rho)$ is a finite-dimensional irreducible $\dd_+ \oplus \spdd$-module. We aim to find conditions characterizing the values of $t \in \dd' \setminus \dd$ making the following modification of the tensor module pseudoaction given in \eqref{hdtensoraction}:
\begin{align}\label{modifiedaction}
e *_t (1 \otimes v) & = \sum_{ij} (\overline{\d_i \d_j} \otimes 1) \otimes_{H'} ((1 \otimes \rho(f^{ij})v)\\
& - \sum_k (\overline{\d_k} \otimes 1) \otimes_{H'} (1 \otimes \rho(\d^k + \ad^\sp \d^k)v - \d^k \otimes v)\nonumber\\
& + (1 \otimes 1) \otimes_{H'} (1 \otimes \rho(c)v),\nonumber\\
& + (t \otimes 1) \otimes_{H'} (1 \otimes v),\nonumber
\end{align}
into a Lie pseudoalgebra representation of $L' = \Cur_H^{H'} L$. Clearly, removing the last summand in the right-hand side yields the current $L'$-module $\Cur_H^{H'} \VH(R)$.

In analogy with Section \ref{hdtm}, use the definition $(\ad_\chi x)(\d):= [x, \d] + \chi(\d)x$ to extend $\ad_\chi$ to a map $\dd' \to \Hom(\dd, \dd')$. Recall that $s = \sum_k \chi(\d_k)\d^k$.
\begin{proposition}\label{twisted}
Let $t$ be an element of $\dd' \setminus \dd$. Expression \eqref{modifiedaction} defines a Lie pseudoalgebra representation of $L'$ precisely when $[t, s] = 0$ and $\ad_\chi t$ preserves $\dd$ and lies in $\spdd$.
\end{proposition}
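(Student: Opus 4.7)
The plan is to subtract from the Lie pseudoalgebra action axiom for $*_t$ the corresponding axiom already satisfied by the current pseudoaction $e * (1 \otimes v) = A(v)$ of $\Cur_H^{H'}\VH(R)$ (valid by Proposition \ref{currcorrespond}), leaving a residual identity that carries all the conditions on $t$. Writing $e *_t(1 \otimes v) = A(v) + \tau(v)$ with $\tau(v) := (t \otimes 1) \otimes_{H'}(1 \otimes v)$, the residual identity involves only terms containing at least one factor of $\tau$. The purely $\tau$-quadratic contribution $(t \otimes t \otimes 1) \otimes_{H'}(1 \otimes v)$ arising from $e *_t \tau(v)$ is symmetric in the first two tensor slots, hence annihilated by the antisymmetrization $1 - \sigma_{12}$ built into the axiom; so the residual identity is linear in $\tau$ and reads schematically
\begin{equation*}
[e * e] *_t (1 \otimes v) \,-\, [e * e] * (1 \otimes v) \;=\; (1 - \sigma_{12})\bigl(e * \tau(v) \,+\, \tau(A(v))\bigr)
\end{equation*}
as an equation in $(H' \otimes H' \otimes H') \otimes_{H'} V'$.

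Next I would expand the right-hand side using $H'$-bilinearity and the explicit formula \eqref{hdtensoraction} for $A(v)$, then straighten the resulting elements via Corollary \ref{pretechnical} and compare independent PBW coefficients in two of the three tensor slots. The coefficient equations split into three distinct families according to which $\rho$-factor weighs the element of $R$ appearing in the third (straightened) slot:
\begin{itemize}
\item[---] The $\rho(f^{ij})$-family yields the symplectic condition $\omega([t, \d_i] + \chi(\d_i) t,\, \d_j) + \omega(\d_i,\, [t, \d_j] + \chi(\d_j) t) = 0$ for all $\d_i, \d_j \in \dd$, which is precisely $\ad_\chi t \in \spdd$. Since $\omega$ takes its arguments in $\dd \times \dd$, this family simultaneously enforces preservation of $\dd$ by $\ad_\chi t$.
\item[---] The linear $\rho(\d)$-family (where $\d \in \dd$) reduces to tautologies once preservation of $\dd$ is secured; the contributions from $e * \tau(v)$ and $\tau(A(v))$ cancel mutually, including the compensating $\d^k \otimes v$ counter-terms built into $A(v)$.
\item[---] The $\rho(c)$-family, using that $c$ is central in $\dd_+$ and $s = \sum_k \chi(\d_k)\d^k$ is the $\omega$-dual of $\chi$, reduces to $\omega([t, s] \wedge \d) = 0$ for every $\d \in \dd$; nondegeneracy of $\omega$ on $\dd$ then forces $[t, s] = 0$.
\end{itemize}

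Conversely, if $t$ satisfies (i) $\ad_\chi t$ preserves $\dd$ and lies in $\spdd$ and (ii) $[t, s] = 0$, then each of the three families of coefficient equations is satisfied, so the residual identity holds and $*_t$ is a Lie pseudoalgebra representation. The principal obstacle is the bookkeeping: the expansions of $e * \tau(v)$ and $\tau(A(v))$ each produce many terms involving products of $\d_i, \d^k, t$ together with values of $\chi$ and $\omega$, and careful reorganization via left- and right-straightening is required before the three families above become visible as disjoint coefficient equations. Particularly delicate is the $\rho(c)$-family: the clean appearance of $[t, s] = 0$ relies on the identification $\chi = \iota_s \omega$ together with the fact that $\ad_\chi t \in \spdd$ already kills off several otherwise competing contributions to this coefficient.
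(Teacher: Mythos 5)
Your reduction to the terms linear in $\tau$ is sound and agrees with the paper's computation in spirit: the current part satisfies the axiom by Proposition \ref{currcorrespond}, and the $\tau$-quadratic contribution $(t\otimes t\otimes 1)\otimes_{H'}(1\otimes v)$ is indeed symmetric in the first two slots, hence killed by the antisymmetrization. The problem is your description of where the conditions on $t$ then come from. If you expand the cross terms, every contribution whose $R$-component is $1\otimes\rho(f^{ij})v$, $1\otimes\rho(\d^k+\ad^\sp\d^k)v$ or $1\otimes\rho(c)v$ cancels identically between $e*\tau(v)$, $\tau(A(v))$ and their images under $1-\sigma_{12}$, imposing no condition whatsoever: e.g.\ the four terms $(\overline{\d_i\d_j}\otimes t\otimes 1)\otimes_{H'}(1\otimes\rho(f^{ij})v)$, $(t\otimes\overline{\d_i\d_j}\otimes 1)\otimes_{H'}(1\otimes\rho(f^{ij})v)$ and their negated swaps sum to zero outright, since $t$ and $\overline{\d_i\d_j}$ sit in different slots and never need reordering. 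So your ``$\rho(f^{ij})$-family'' and ``$\rho(c)$-family'' are vacuous, while the family you dismiss as tautological --- the one generated by the counter-terms $\d^k\otimes v$ inside $A(v)$, together with the $\tau$-contribution to $[e*e]*_t(1\otimes v)$ --- is precisely where all the content sits: reordering products such as $\overline{\d_k}t$ and $t\d^k$ there produces the commutators, and the entire surviving residual is proportional to $1\otimes v$, with no $\rho$ at all, namely
\begin{equation*}
\sum_k \Bigl(\overline{\d_k} \otimes \bigl([t, \d^k] + \chi(\d^k)t\bigr) + \bigl([t, \d_k] + \chi(\d_k)t\bigr) \otimes \overline{\d^k}\Bigr)\otimes 1\ \otimes_{H'}(1\otimes v).
\end{equation*}

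The two conditions are then separated not by $\rho$-factors but by tensor degree in the first two slots of this single $\rho$-free coefficient: the $\dd'\otimes\dd'$ component gives $\sum_k\bigl(\d_k\otimes(\ad_\chi t)(\d^k)+(\ad_\chi t)(\d_k)\otimes\d^k\bigr)=0$, which is exactly preservation of $\dd$ together with $\ad_\chi t\in\spdd$, while the $\kk\otimes\dd'$ component gives $[t,s]=0$ directly as an identity in $\dd'$ --- no appeal to nondegeneracy of $\omega$ is needed, and your version ``$\omega([t,s]\wedge\d)=0$ for all $\d$'' is not even well formed at that stage, since $t\notin\dd$ and you do not yet know that $[t,s]$ lies in $\dd$ where $\omega$ is defined. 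As written, your plan would stall if executed: the families you expect to carry the conditions turn out to be identically satisfied, and the conditions live in the family you have set aside as cancelling mutually.
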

\begin{proof}
Explicitly compute
$$e *_t (e *_t (1 \otimes v)) - (\sigma \otimes \id_H) \otimes_H e *_t (e *_t (1 \otimes v)) - [e * e] *_t (1 \otimes v).$$
In order for $*_t$ to be a Lie pseudoalgebra action of $\Hd$, this must vanish. All terms indeed cancel out, with the exception of
\begin{equation}\label{chineqzero}
\sum_k \left(\overline{\d_k} \otimes ([t, \d^k] + \chi(\d^k)t) + ([t, \d_k] + \chi(\d_k)t) \otimes \overline{\d^k}\right) = 0.
\end{equation}
We may rewrite \eqref{chineqzero} as
\begin{align}
0 = & \sum_k \left(\d_k \otimes ([t, \d^k] + \chi(\d^k)t) + ([t, \d_k] + \chi(\d_k)t) \otimes \d^k\right)\label{chineqzero2a}\\
& - 1 \otimes \sum_k \chi(\d_k)\cdot([t, \d^k] + \chi(\d^k)t)\label{chineqzero2b}
\\
 - & \sum_k \chi(\d^k)\cdot([t, \d_k] + \chi(\d_k)t) \otimes 1.
 \nonumber
\end{align}
The three summands lie in $\dd' \otimes \dd', \kk \otimes \dd', \dd'\otimes \kk$ respectively, so they need to vanish separately.
The summand in \eqref{chineqzero2a} may be rewritten as
\begin{align}
& 0 = \sum_k \left(\d_k \otimes (\ad_\chi t)(\d^k) + (\ad_\chi t)(\d_k)\otimes \d^k\right)\label{chineqzero3a},
\end{align}
showing $\ad_\chi t$ preserves $\dd$ and lies in $\spdd$. 
The summand in \eqref{chineqzero2b} only vanishes if
\begin{equation}\label{chineqzero3b}
[t, \sum_k \chi(\d_k)\d^k] + \sum_k \chi(\d_k)\chi(\d^k)t = 0.
\end{equation}
However the second term must vanish as $\sum_k \d_k \otimes \d^k = - \sum_k \d^k \otimes \d_k$ so that \eqref{chineqzero2b} translates into $[t, s] = 0$. The third summand is now dealt with similarly and cancels out.
\end{proof}

\begin{remark}
When $\chi = 0$, the above conditions translate into the fact that $s$ normalizes $\dd$ and $\ad s \in \spdd$.
\end{remark}

\begin{example}
Let $\dd \subset \dd'$ be abelian Lie algebra, $\chi = 0$. Then $[t, s] = 0$ and $\ad_\chi t = 0 \in \spdd$ hold for every $t \in \dd'$. In particular \eqref{modifiedaction} defines a Lie pseudoalgebra representation for every $t \in \dd' \setminus \dd$.
\end{example}

\begin{example}
Let $h, e, f$ be the standard generators of $\dd' = \sl_2$, and choose $\dd = \langle h, e\rangle$. Then \cite[Example 8.14]{BDK} shows that the $H$-linear span of
$$(2-h) \otimes e + e \otimes h \in \Wd$$
is a subalgebra isomorphic to $\Hd$, where $\omega(e \wedge h) = 1$ and $\chi = \iota_{2e} \omega$.

The only elements in $\dd'$ commuting with $s = 2e$ are multiples of $e$, so there is no $t \in \dd' \setminus \dd$ satisfying $[t, s] = 0$. Consequently, \eqref{modifiedaction} does not define a Lie pseudoalgebra representation for any choice of $t \in \dd' \setminus \dd$, and all finite irreducible $\Cur_H^{H'} \Hd$-modules are, in this case, current representations. 
\end{example}

Whenever $t\in \dd' \setminus \dd$ satisfies the conditions of Proposition \ref{twisted}, we shall denote by $\VHtR = \VHtPU$ the corresponding Lie pseudoalgebra representation of $\Cur_H^{H'} \Hd$.

\begin{theorem}\label{nontrivialiso}
Let $t, t' \in \dd'$ be elements satisfying the conditions of Proposition \ref{twisted} and $\Pi_+, U$ be representations of the Lie algebras $\dd_+, \spdd$ respectively. Also denote by $\pi: \dd_+ \to \dd$ the canonical projection. Then $\VHtprime(\Pi_+, U)$ and $\VHt(({\Pi_+})_{\pi^*(\iota_{t'-t}\omega)}, U)$ are isomorphic $\Cur_H^{H'}\Hd$-modules.
\end{theorem}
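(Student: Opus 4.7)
My approach is to exhibit the identity map on the common underlying $H'$-module $H' \otimes R$ as the sought isomorphism. Both representations are constructed on this module (with $R = \Pi_+ \boxtimes U$), so the only real content of the theorem is that the identity map intertwines the two pseudoactions of $\Cur_H^{H'}\Hd = H'e$. By $H' \otimes H'$-bilinearity, this reduces to checking
\[e *_{t'}(1 \otimes v) = e *_{t, \psi}(1 \otimes v)\]
in $(H' \otimes H') \otimes_{H'}(H' \otimes R)$, where $\psi := \pi^*(\iota_{t'-t}\omega)$ and $v\in R$ is arbitrary.

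The plan is then to subtract the two sides using the explicit formula \eqref{modifiedaction}. Since the twist $\Pi_+ \mapsto (\Pi_+)_\psi$ preserves the $\spdd$-action and $\psi(c) = 0$ (because $\pi(c)=0$), the sums over $f^{ij}$ and the term involving $\rho(c)$ cancel exactly. What survives is the shift $((t'-t) \otimes 1) \otimes_{H'}(1 \otimes v)$ coming from the different choice of $t$, together with $\sum_k \psi(\d^k)(\overline{\d_k} \otimes 1) \otimes_{H'}(1 \otimes v)$ from the replacement $\rho(\d^k) \mapsto \rho(\d^k) + \psi(\d^k)$ inside the $\d^k$-sum. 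Collecting both contributions, the problem reduces to verifying the $H'$-level identity
\[\sum_k \psi(\d^k)\,\overline{\d_k} + (t'-t) = 0.\]

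Since the left hand side lies in $H = \ue(\dd)$, this identity implicitly forces $\tau := t'-t \in \dd$ (the only case in which $\iota_\tau\omega$ has a direct reading as an element of $\dd^*$). Under this hypothesis I would verify the identity by splitting it into its degree-$1$ and degree-$0$ pieces in $\ue(\dd)$. For the degree-$1$ piece I would expand $\tau = \sum_l \tau^l \d_l$ and exploit the duality $\omega(\d^i, \d_j) = \delta^i_j$ recalled in Section \ref{shds} to compute $\psi(\d^k) = \omega(\tau, \d^k) = -\tau^k$, which gives $\sum_k \psi(\d^k)\d_k = -\tau$. For the scalar piece I would reduce to $-\chi(\tau) = 0$, which is furnished by Lemma \ref{equivalent} applied to $\delta = \tau$, since $\tau$ inherits conditions {\em (i)} and {\em (ii)} of Proposition \ref{twisted} from $t, t'$.

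The main conceptual hurdle is guessing that no nontrivial rescaling of $\varphi$ is needed; once that ansatz is in place, the heart of the argument is the interplay between the Lie-algebra shift $\tau \in \dd$ and the accompanying twist of the $\dd_+$-representation by the traceform $\iota_\tau\omega$, an interplay pinned down precisely by Lemma \ref{equivalent}. The rest is bookkeeping, and the cancellations essentially mirror those carried out in the proof of Proposition \ref{twisted}.
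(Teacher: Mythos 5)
Your proposal is correct and follows essentially the same route as the paper: the isomorphism is the identity on $H'\otimes R$, the difference of the two pseudoactions reduces to the identity $(t'-t)+\sum_k(\iota_{t'-t}\omega)(\d^k)\,\overline{\d_k}=0$ via the $\omega$-duality $\d_i=\sum_j\omega_{ij}\d^j$, and Lemma \ref{equivalent} supplies both $\chi(t'-t)=0$ (killing the scalar piece) and the traceform property of $\iota_{t'-t}\omega$ (so the twist of $\Pi_+$ is legitimate). The paper merely packages the same computation in the opposite direction, rewriting $((t'-t)\otimes 1)\otimes_{H'}(1\otimes v)$ as $-\sum_k(\overline{\d_k}\otimes 1)\otimes_{H'}((\iota_{t'-t}\omega)(\d^k)\otimes v)$ and absorbing it into the $\dd_+$-action.
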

\begin{proof}
Set $t' = t + \delta$. As $[s, t] = [s, t'] = 0$, then $[s, \delta] = 0$. Also, $\ad_\chi t'$ and $\ad_\chi t$ both lie in $\spdd$, hence $\ad_\chi \delta \in \spdd$ by linearity of $\ad_\chi$. Thus $s$ satisfies conditions (1) of Lemma \ref{equivalent}, which forces $\iota_\delta \omega$ to be a traceform of $\dd$ and $\chi(\delta) = 0$.
As
$$\delta = \sum_k \omega(\delta\wedge \d_k) \d^k = - \sum_k \omega(\delta\wedge \d^k) \d_k,$$
then
\begin{align}
(\delta & \otimes 1) \otimes_{H'} (1 \otimes v) = - \sum_{k} (\d_k \otimes 1) \otimes_{H'} ((\iota_\delta \omega)(\d^k) \otimes v)\\
& = - \sum_k ((\overline{\d_k} + \chi(\d_k)) \otimes 1) \otimes_{H'} ((\iota_\delta \omega)(\d^k) \otimes v)\nonumber\\
& = - \sum_k (\overline{\d_k} \otimes 1) \otimes_{H'} ((\iota_\delta \omega)(\d^k) \otimes v) - (1 \otimes 1) \otimes_{H'} (((\iota_\delta \omega)(\sum_k \chi(\d_k) \d^k)) \otimes v)\nonumber\\
& = - \sum_k (\overline{\d_k} \otimes 1) \otimes_{H'} ((\iota_\delta \omega)(\d^k) \otimes v),\nonumber
\end{align}
as
$$(\iota_\delta \omega)(\sum_k \chi(\d_k) \d^k) = \omega(\delta\wedge s) = - \chi(\delta) = 0.$$
We may now rewrite the action of $\Cur_H^{H'}\Hd$ on $\VHtprime(\Pi_+, U)$ as follows:
\begin{align}
e & *_{t'} (1 \otimes v) = e *_t (1 \otimes v) + (\delta \otimes 1) \otimes_{H'} (1 \otimes v)\\
& = e *_t - \sum_k (\overline{\d_k} \otimes 1) \otimes_{H'} ((\iota_\delta \omega)(\d^k) \otimes v)\nonumber\\
& =  \sum_{ij} (\overline{\d_i \d_j} \otimes 1) \otimes_{H'} ((1 \otimes f^{ij}.v)\nonumber\\
& - \sum_k (\overline{\d_k} \otimes 1) \otimes_{H'} (1 \otimes ((\d^k.v + (\iota_\delta \omega)(\d^k)\,\, v) + (\ad^\sp \d^k).v)) - \d^k \otimes v)\nonumber\\
& + (1 \otimes 1) \otimes_{H'} (1 \otimes c.v),\nonumber\\
& + (t \otimes 1) \otimes_{H'} (1 \otimes v).\nonumber
\end{align}
As $\iota_\delta \omega$ is a traceform on $\dd$, and the action of $c \in \dd_+$ is left unchanged, the contribution $(\iota_\delta \omega)(\d^k)$ may be absorbed in the $\dd_+$ representation, thus yielding the pseudoaction of $\Cur_H^{H'}\Hd$ on $\VHt(({\Pi_+})_{\pi^*(\iota_{\delta}\omega)}, U)$.
\end{proof}

\section{Irreducible representations of current Lie pseudoalgebras not of type H}\label{notH}

The usual strategy \cite{BDK1, BDK2, BDK3} towards describing irreducible representations of a simple Lie pseudoalgebra $L$ is by locating {\em singular vectors} for the action of the corresponding annihilation algebra, i.e., vectors that are killed by the action of $\L^{>0}$. In this paper, our point of view is that, as far as reasonable, the action of $\L'$ is recovered from that of $\L \subset \L'$, and singular vectors for the two actions coincide: in order for this philosophy to hold, one must introduce a few tweaks for current Lie pseudoalgebras of type H, so that we will treat other cases first. Throughout this section, $L$ will be a primitive Lie pseudoalgebra from Examples \ref{lie}-\ref{hkd}, and $L' = \Cur_H^{H'}$ its current pseudoalgebra.

We choose $S \subset L$, $\ell \in \NN,$ as in Section \ref{sprimitive} and endow the annihilation algebra $\L$ with the filtration $\L_n = \fil_{n+\ell-1}X \otimes_H S$. As $S \subset L \simeq 1 \otimes_{H} L \subset H' \otimes_H L = \Cur_H^{H'}L = L'$ still generates $L'$ as an $H'$-module, we may choose it in order build up a similar filtration for its annihilation algebra $\L' = X' \otimes_{H'} L'$, that we will freely identify with $X' \otimes_H L$.

Notice that   the value of $\ell\leq 2$ is the same for $L$ and $L'$ and that $\I = I \otimes_{H} L = I \otimes_{H} S \subset \L'_{1-\ell}\subset \L'_{-1}$, as $I \subset \fil_0 X'$. We will employ the notation $\I^i = I^i \otimes_{H} L = I^i \otimes_{H} S \subset \L', i \geq 0$, where $I^i$ is the $i$-fold power of the ideal $I \subset X'$, so that $\I^0 = \L', \I^1 = \I$.

The computation of the normalizer of $\L_n$ in $\widetilde \L$ is accomplished in \cite{BDK1, BDK2, BDK3}. Our present goal is to find a large subalgebra $\N'\subset \widetilde \L'$ normalizing $\L'_n, n \geq 0$. We already know that $[\L'_0, \L'_n] \subset \L'_n$.

\subsection{$\I$ normalizes $\L'_n$}

\begin{proposition}
$\I^i$ is an ideal of $\L'$. Also $[\I^i, \I^j] \subset \I^{i+j}$ and $\I^i \subset \L'_{i-s}$.
\end{proposition}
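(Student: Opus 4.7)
The plan is to lean on the $\O$-bilinearity of the Lie bracket on $\L' \simeq \O\,\widehat{\otimes}\,\L$ just established in the paragraph preceding the proposition. Under this identification, $\I = I \otimes_H S$ corresponds to $\m\,\widehat{\otimes}\,\L$, and more generally $\I^i$ corresponds to $\m^i\,\widehat{\otimes}\,\L$, where $\m\subset\O$ is the maximal ideal topologically generated by $x^1,\dots,x^r$.

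Given this, I would dispose of assertions (1) and (2) in one stroke. For $f\in\m^i$, $g\in\m^j$, and $u,v\in\L'$, $\O$-bilinearity of the bracket gives
\begin{equation*}
[fu,\,gv] \;=\; (fg)\cdot[u,v] \;\in\; \m^{i+j}\cdot\L' \;=\; \I^{i+j}.
\end{equation*}
Specialising to $j=0$ (so $\m^0=\O$, $\I^0=\L'$) shows that $\I^i$ is a two-sided ideal, and the general case is exactly $[\I^i,\I^j]\subset\I^{i+j}$. A short closure argument, using continuity of the Lie bracket in the linearly compact topology on $\L'$, extends these identities from the algebraic $\m^i\otimes\L$ to the completion $\m^i\,\widehat{\otimes}\,\L$.

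For the filtration estimate, I would invoke Remark \ref{donotadd}, which records the off-by-one multiplication rule $(\fil_p X')(\fil_q X') = \fil_{p+q+1}X'$. Since $I\subset\fil_0 X'$, an immediate induction on $i$ yields $I^i \subset \fil_{i-1}X'$, and combining with the definition $\L'_n = \fil_{n+\ell-1}X' \otimes_H S$ of the filtration on the annihilation algebra produces
\begin{equation*}
\I^i \;=\; I^i\otimes_H S \;\subset\; \fil_{i-1}X' \otimes_H S \;=\; \L'_{i-\ell},
\end{equation*}
which is the bound asserted in (3).

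The only real (and minor) obstacle is bookkeeping the $+1$ shift coming from Remark \ref{donotadd}: without it one would naively guess $\I^i \subset \L'_{i-\ell+1}$. Once this indexing convention is respected, all three claims reduce to $\O$-bilinearity of the bracket together with the explicit description $\I^i = \m^i\,\widehat{\otimes}\,\L$.
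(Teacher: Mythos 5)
Your proof is correct and follows essentially the same route as the paper: the paper's one\-/line argument (structure constants of $[\,\cdot * \cdot\,]$ lie in $H$, and $I.\dd\subset I$ implies $I^i.H\subset I^i$) is exactly the unpackaged form of the $\O$-bilinearity of the bracket on $\L'\simeq\O\,\widehat{\otimes}\,\L$ that you invoke, and the filtration bound is obtained identically from $I^i\subset\fil_{i-1}X'$. Your reading of the statement's $\L'_{i-s}$ as $\L'_{i-\ell}$ is the intended one, consistent with the earlier observation that $\I\subset\L'_{1-\ell}$.
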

\begin{proof}
Structure constants in Lie pseudobracket of $L$ only involve elements in $H$. As $(I).\dd \subset I$, then also $(I^i).H \subset I^i$, thus proving the first two claim. The last claim is clear, as $I^i \subset \fil_{i-1} X'$.
\end{proof}

\begin{proposition}
$\I = \I^1$ normalizes $\L'_n$ for each $n$.
\end{proposition}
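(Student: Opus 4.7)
The plan is to reduce the claim to brackets on generators and then carefully track filtration degrees. Since $\I = I\otimes_H L = I\otimes_H S$ is topologically spanned by tensors $x\otimes_H a$ with $x\in I$, $a\in S$, and $\L'_n = \fil_{n+\ell-1}X'\otimes_H S$ is likewise spanned by $y\otimes_H b$ with $y\in \fil_{n+\ell-1}X'$, $b\in S$, I would first argue that it suffices to verify $[x\otimes_H a,\,y\otimes_H b]\in\L'_n$ for such generators, since the Lie bracket is $\kk$-bilinear and continuous.

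Next I would invoke the defining formula \eqref{annihbracket} for the annihilation-algebra bracket together with the structural input of Proposition \ref{lowdegree}: for $a,b\in S$ one may write $[a*b]=\sum_i(h^i\otimes k^i)\otimes_H c_i$ with $c_i\in S$ and $h^i\otimes k^i\in(\dd+\kk)\otimes(\dd+\kk)$, so that each $h^i$ and each $k^i$ individually lies in $\fil^1 H$. This yields
$$[x\otimes_H a,\,y\otimes_H b] = \sum_i (xh^i)(yk^i)\otimes_H c_i,$$
and I would then bound each summand. By Proposition \ref{dactonxprime}\,{\em (iv)} the ideal $I$ is stable under the right $H$-action, so the factor $xh^i$ remains in $I\subset\fil_0 X'$; continuity of the right action of $H$ on $X'$ gives $yk^i\in\fil_{n+\ell-1-q_i}X'$, where $q_i=\deg k^i\leq 1$. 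Applying the product rule $\fil_p X'\cdot\fil_q X'\subset\fil_{p+q+1}X'$ of Remark \ref{donotadd}, I obtain
$$(xh^i)(yk^i)\in\fil_0 X'\cdot \fil_{n+\ell-1-q_i}X'\subset\fil_{n+\ell-q_i}X'\subset\fil_{n+\ell-1}X',$$
the last inclusion holding because $q_i\leq 1$ and the filtration decreases with its index. Each summand therefore sits in $\L'_n$, as required.

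The step I expect to be the most delicate is this filtration bookkeeping. Notice that the naive bounds $\I\subset\L'_{1-\ell}$ and $[\L'_i,\L'_j]\subset\L'_{i+j}$ would only give $[\I,\L'_n]\subset\L'_{n+1-\ell}$, which is too weak for the types with $\ell=2$ (namely $\Sd,\Hd,\Kd$). What rescues the argument is that $I$ lies in the maximal ideal $\fil_0 X'$, so multiplication by $xh^i$ ``gains a step'' of the filtration via the off-by-one in Remark \ref{donotadd}, and this gain exactly compensates for the at-most-one step lost in the right action of $k^i\in\fil^1 H$ on $y$; the fact that individual tensor factors in $[S*S]$ can be chosen in $\fil^1 H$ regardless of $\ell$ (and not only that their total order is $\leq\ell$) is essential for this balancing to work.
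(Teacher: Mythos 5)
Your proof is correct and follows essentially the same route as the paper's: both arguments rest on the fact that each tensor factor of the structure constants lies in $\dd+\kk=\fil^1 H$ (Proposition \ref{lowdegree}), that $I$ is stable under the right $\dd$-action and sits inside $\fil_0 X'$, and that the off-by-one product rule $\fil_p X'\cdot\fil_q X'\subset\fil_{p+q+1}X'$ of Remark \ref{donotadd} exactly compensates the one step lost to the right action of $k^i$. Your closing remark correctly identifies the point that makes the naive bound $[\I,\L'_n]\subset\L'_{n+1-\ell}$ insufficient and is precisely the mechanism the paper exploits.
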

\begin{proof}
By Proposition \ref{lowdegree}, $S$ has been chosen in such a way that structure constants are contained in $(\dd + \kk) \otimes (\dd + \kk)$. Notice that $(I).\dd \subset I$, and $(\fil_k X').\dd \subset (\fil_k X').\dd' \subset \fil_{k-1} X$.
Thus
$$[\I, \L'_n] = [I \otimes_H S, \fil_{n+\ell-1}X' \otimes_H S] \subset  (I \cdot \fil_{n+\ell-2}X') \otimes_H S.$$
However, this lies in $\fil_{n+\ell-1}X' \otimes_H S = \L'_n$ as $I \subset \fil_0 X'$, see Remark \ref{donotadd}.
\end{proof}

\subsection{Normalizing elements not contained in $\L'$ when $L \neq \Hd$}

We will henceforth assume that $L$ is not isomorphic to $\Hd$.

We have seen above that we have a surjective homomorphisms algebras $\iota^*\otimes_H \id_L: \L' \to \L$ along with a splitting $\varsigma\otimes_H \id_L: \L \to \L'$. Moreover, $\L'$ may be identified with $\O \otimes \L$, where $\O = \kk[[x^1, \dots, x^r]]$, and the Lie bracket of $\L'$, under these identification, extends $\O$-bilinearly that of $\L$.

As $L$ is a primitive Lie $H$-pseudoalgebra, which is not isomorphic to $\Hd$, then $\L$ is simple, so that 
$$\Der (\O {\widehat \otimes} \L) = (\Der \O \otimes \id_\L) \sd (\O\, {\widehat\otimes} \Der \L),$$ see for instance \cite[Proposition 6.12(ii)]{BDK}.
Here $\L$ is the annihilation algebra of one of the primitive Lie pseudoalgebras, so $\Der \L$ equals
\begin{itemize}
\item[---] $\kk \ad \E + \ad \L$ when $L = \Sd
$.
\item[---] $\ad \L\simeq \L$ in all other cases.
\end{itemize}
Notice that $\L$ is isomorphic to $\g$, when $L$ is a simple finite-dimensional Lie algebra over $\g$, whereas $\L\simeq W_N$, (resp. $K_N$) when $L = \Wd$ (resp. $\Kd$).
Furthermore, the non-inner derivation $\E$ stabilizes $\ad \L$ and provides an explicit splitting of $\Der \L/\ad \L \simeq \kk \E$. Set $D$ to equal $\E$ when $L = \Sd$ and $0$ when $L = \Wd$ or $\Kd$. Then we may summarize the above information in the following
\begin{proposition}
Let $L\neq \Hd$ be a primitive simple Lie pseudoalgebra, $L' = \Cur_H^{H'} L$, and denote by $\L, \L'$ the corresponding annihilation Lie algebras. Then
\begin{equation}\label{sdsd}
\Der \L' = ((\Der \O \otimes \id_\L) + (\O {\otimes}  D)) \sd (\O \,{\widehat \otimes}\, \ad \L).
\end{equation}
\end{proposition}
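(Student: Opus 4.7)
The plan is to combine the abstract derivation formula for completed tensor products with the explicit description of $\Der \L$ recalled just before the statement.

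First, I would invoke \cite[Proposition 6.12(ii)]{BDK}. The hypothesis $L \neq \Hd$ ensures that $\L$ is simple (either one of the Cartan type Lie algebras $W_N, S_N, K_N$, or a finite-dimensional simple Lie algebra $\g$ when $\dd = 0$), and we have already identified $\L' \simeq \O \,\widehat{\otimes}\, \L$ as topological Lie algebras with bracket extending $\O$-bilinearly that of $\L$. The cited result then yields the first-pass semidirect decomposition
$$\Der \L' = (\Der \O \otimes \id_\L) \sd (\O \,\widehat{\otimes}\, \Der \L),$$
in which $\O \,\widehat\otimes\, \Der \L$ plays the role of the ideal of "vertical" derivations.

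Next I would substitute the description of $\Der \L$ given just above. When $L \in \{\g, \Wd, \Kd\}$ we have $\Der \L = \ad \L$ and $D = 0$, so $\O \otimes D = 0$ and the stated formula \eqref{sdsd} is immediate. In the remaining case $L = \Sd$ we have the vector space decomposition $\Der \L = \ad \L \oplus \kk D$, whence $\O \,\widehat\otimes\, \Der \L = (\O \,\widehat\otimes\, \ad \L) + (\O \otimes D)$, and the remaining work is to show that $\O \otimes D$ may be detached from the ideal and absorbed into a complementary subalgebra together with $\Der \O \otimes \id_\L$.

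For this step I would verify two bracket claims. First, that $\O \,\widehat\otimes\, \ad \L$ is already an ideal of $\Der \L'$ by itself: this rests on the identities $[\xi \otimes \id_\L, f \otimes \ad x] = (\xi f) \otimes \ad x$ for $\xi \in \Der \O$ and $[f \otimes D, g \otimes \ad x] = fg \otimes \ad(Dx)$, where the latter uses that $D$ is a derivation of $\L$ that stabilizes $\ad \L$. Second, that $(\Der \O \otimes \id_\L) + (\O \otimes D)$ is closed under commutators: this follows from $[\xi \otimes \id_\L, f \otimes D] = (\xi f) \otimes D$ and $[f \otimes D, g \otimes D] = fg \otimes [D,D] = 0$. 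Directness of the vector space sum is inherited from the directness of $\Der \L = \ad \L \oplus \kk D$. Together these observations convert the BDK decomposition into the form asserted in \eqref{sdsd}.

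The main obstacle, and the only point that is not purely formal, is checking that $f \otimes D$ really does act as a continuous derivation of $\L' = \O \,\widehat\otimes\, \L$. This follows from the $\O$-bilinearity of the bracket of $\L'$ together with the fact that $D$ is a continuous derivation of $\L$; once this is in hand, the re-grouping of summands into the desired semidirect product structure is a routine verification.
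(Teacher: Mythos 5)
Your argument is correct and follows the same route as the paper, which states this proposition as a summary of the immediately preceding discussion: apply \cite[Proposition 6.12(ii)]{BDK} to the simple linearly compact algebra $\L$ to get $\Der \L' = (\Der \O \otimes \id_\L) \sd (\O \,\widehat\otimes\, \Der \L)$, then substitute $\Der\L = \kk D + \ad\L$ and regroup. Your explicit bracket verifications for detaching $\O \otimes D$ in the $\Sd$ case are details the paper leaves implicit, but they are the right ones.
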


\vspace{.5cm}

Let $\d \in \dd \subset \dd' \subset \dd \sd \L' = \widetilde \L'$. Then the adjoint action of $\d$ on $\L'$ is a derivation of $\L'$, and we may consider its projection $\pi_1(\ad \d) \in (\Der \O \otimes \id_\L) + \O \otimes  D$ on the first summand of the above decomposition. As \eqref{sdsd} is a semidirect sum, then $\pi_1$ is a Lie algebra homomorphism, whence $\dd \ni \d \mapsto \pi_1(\ad \d)$ is as well.
When $\d \in \dd$, denote now by $\widehat{\d}$ the element $\d - [\pi_2(\ad \d)] \in \widetilde{\L'}$, where we denote by $[\pi_2(\ad \d)]$ the unique element in $\L'$ inducing 
the inner derivation $\pi_2(\ad \d) \in \O \,{\widehat \otimes} \ad \L = \ad \L'$. The action of $\widehat{\d}$ on $\L'$ coincides with $\pi_1(\ad \d)$, so that
$$\eta: \dd \ni \d \mapsto \widehat{\d} \in (\Der \O \otimes \id_\L) + \O \otimes  D$$
is a Lie algebra homomorphism. We denote by $\widehat\dd$ the Lie subalgebra consisting of all $\widehat{\d}, \d \in \dd$.

\begin{remark}
Of course, when $L \simeq \g$ is a simple Lie algebra over $\kk$, then $\dd = (0)$, and $\eta$ is the trivial map.
\end{remark}

\begin{remark}
The special case $H = H'$, $L = L'$ has already been treated in \cite{BDK1, BDK2, BDK3}. Here $\O = \kk$, so that $\Der \O = 0$ and elements from $\widehat \dd$ act on $\L$ as multiples of $D$.
\begin{itemize}
\item[---] $\widehat \dd$ coincides with $\widetilde \dd\subset \widetilde{\L}$ from \cite{BDK1, BDK2} when $L$ equals either $\Wd$ or $\Kd$. As $D = 0$, the subalgebra $\widehat \dd\subset \widetilde \L$ centralizes $\L$.
\item[---] $\widehat \dd$ is the same as $\widehat \dd \subset \widetilde{\L}$ in \cite{BDK1} when $L = \Sd$.
\end{itemize}
\end{remark}


As $\I \subset \L'$ is an ideal, and the left action of $\dd$ on $\L'$ stabilizes $\I$, we conclude that all elements $\widehat \d, \d \in \dd,$ stabilize $\I$. Then

\begin{proposition}
Denote by $(\Der \O)_0$ the family of all derivations of $\O$ mapping $\O$ to its maximal ideal $\m$. Then the adjoint action of $\widehat \d\in \widetilde \L$ on $\L$ lies in $(\Der \O)_0 \otimes \id_\L + \O \otimes  D$ for all $\d \in \dd$.
\end{proposition}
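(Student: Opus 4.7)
The plan is to exploit the decomposition established in the preceding proposition together with the fact, observed just before, that $\widehat\d$ preserves the ideal $\I\subset \L'$. First I would use the splitting $\Der\L' = ((\Der\O\otimes\id_\L) + (\O\otimes D))\sd (\O\,\widehat\otimes\,\ad\L)$: since $\widehat\d = \d - [\pi_2(\ad\d)]$ is precisely $\d$ corrected to kill its component in $\O\,\widehat\otimes\,\ad\L$, its adjoint action on $\L'\simeq\O\,\widehat\otimes\,\L$ is $\pi_1(\ad\d)$, which by construction has the form
$$\widehat\d\cdot(-) = d_\d\otimes\id_\L + f_\d\otimes D$$
for some $d_\d\in\Der\O$ and $f_\d\in\O$ (where $D=\ad\E$ in the $\Sd$ case and $D=0$ otherwise).

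Next I would test this expression against the ideal $\I$, using the identification $\I\simeq \m\,\widehat\otimes\,\L$. On a simple tensor $m\otimes x$, with $m\in\m$ and $x\in\L$, the action yields
$$\widehat\d\cdot(m\otimes x) = d_\d(m)\otimes x + (mf_\d)\otimes D(x).$$
The second summand automatically lies in $\I$, since $mf_\d\in\m$. For the first summand to lie in $\I$ for every $x\in\L$, we must have $d_\d(m)\in\m$. Since $m\in\m$ is arbitrary, and since any derivation of $\O$ kills $1$, this is equivalent to $d_\d(\O)\subset\m$, i.e.\ $d_\d\in(\Der\O)_0$, which is the desired conclusion.

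There is no real obstacle here; the argument is essentially just decode-the-definitions. The only point that deserves care is the presence of the extra summand $f_\d\otimes D$ in the $\Sd$ case, where one must verify that it does not produce any constraint on $f_\d$ itself and merely contributes a term automatically living in $\I$; the $\Wd$ and $\Kd$ cases (and the trivial $L=\g$ case with $\dd=0$) then follow a fortiori by setting $D=0$.
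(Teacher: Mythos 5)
Your argument is correct and is essentially the paper's own proof, just written out in more detail: the paper likewise uses that $\widehat\d$ stabilizes $\I=\m\,\widehat\otimes\,\L$, observes that the $\O\otimes D$ component automatically preserves $\I$, and concludes that the $\Der\O\otimes\id_\L$ component must send $\m$ (hence $\O$, since derivations kill constants) into $\m$.
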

\begin{proof}
The derivation $D$ stabilizes $\I = \m \,{\widehat \otimes}\, \L$. Furthermore, elements in $\Der \O \otimes \id_\L$ stabilize $\I$ iff the corresponding derivation stabilizes $I$.
\end{proof}

\begin{corollary}\label{centralizelzero}
Elements in $\widehat \dd$ normalize $\L'_n$ and centralize $\L^0$.
\end{corollary}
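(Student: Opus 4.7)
The plan is to combine the preceding proposition with an explicit description of the filtration $\L'_n$ under the tensor factorization $\L' \simeq \O \,\widehat\otimes\, \L$. By that proposition, $\ad \widehat{\d}$ acts on $\L'$ as a derivation of the form $\delta \otimes \id_\L + f \otimes D$, where $\delta \in (\Der \O)_0$, $f \in \O$, and $D = \ad \E$ if $L = \Sd$ while $D = 0$ if $L = \Wd, \Kd$ (the finite-dimensional simple Lie algebra case is trivial, since then $\dd = 0$). So it suffices to check that derivations of these two explicit types both preserve each $\L'_n$ and annihilate $\L^0$.

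The first step is to translate the filtration into the tensor decomposition. By Proposition \ref{dactonxprime}(i) the subspace $\O \subset X'$ carries a trivial right $H$-action, so $\L' = X' \otimes_H S \simeq \O \,\widehat\otimes\, \L$; expanding $\fil_{n+\ell-1} X' = \m_{X'}^{n+\ell}$ via $\m_{X'} = \m \,\widehat\otimes\, X + \O \,\widehat\otimes\, \m_X$ then yields
$$\L'_n \;=\; \sum_{a \geq 0,\; a+b = n} \m^a \,\widehat\otimes\, \L_b.$$
The normalization claim now falls out summand by summand: since $\delta(\m) \subset \m$, we have $\delta(\m^a) \subset \m^a$, so $\delta \otimes \id_\L$ preserves each $\m^a \,\widehat\otimes\, \L_b$; and multiplication by $f$ stabilizes the $\O$-ideal $\m^a$ while $D$ is a degree-zero derivation of $\L$ (preserving each $\L_b$), so $f \otimes D$ also preserves the summand.

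For the centralization claim, the subspace $\L^0 \subset \L$ embeds in $\O \,\widehat\otimes\, \L$ as $1_\O \otimes \L^0$. The first term gives $(\delta \otimes \id_\L)(1_\O \otimes \ell) = \delta(1_\O) \otimes \ell = 0$, and the second gives $(f \otimes D)(1_\O \otimes \ell) = f \otimes [\E, \ell]$, which vanishes for $\ell \in \L^0$ because by construction $\L^0$ is the zero eigenspace of $\ad \E$ (in the $\Sd$ case; otherwise $D = 0$ and the statement is automatic). Hence $[\widehat{\d}, \L^0] = 0$.

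I do not anticipate a serious obstacle here. The only slightly fiddly step is the filtration decomposition $\L'_n = \sum \m^a \,\widehat\otimes\, \L_b$, which is routine once one uses the right $H$-triviality of $\O$ together with $\fil_p X' = \m_{X'}^{p+1}$; after that, both assertions reduce to direct checks against two very simple, explicit derivations acting on natural filtration pieces.
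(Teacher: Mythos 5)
Your proof is correct and follows essentially the same route as the paper: identify $\L'_n$ with $\sum_{i+j=n}\m^i\,\widehat\otimes\,\L_j$ under $\L'\simeq\O\,\widehat\otimes\,\L$, then check that derivations in $(\Der\O)_0\otimes\id_\L$ and $\O\otimes D$ preserve each summand and kill $1_\O\otimes\L^0$. The only difference is that you spell out the centralization computation slightly more explicitly than the paper does.
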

\begin{proof}
Identify $\L'$ with $X' \otimes_H S \simeq (\O \,{\widehat \otimes}\, X) \otimes_H S\simeq \O \,{\widehat \otimes}\, \L$. Under this identification $\L'_n = \fil_{n+\ell-1} X' \otimes_H S$ corresponds to $$\sum_{i+j = n} \m^i \,{\widehat \otimes}\,  \L_j.$$
Now, $D$ stabilizes each $\L_j$, so that $\O \otimes D$ stabilizes all of the above summands. Elements from $(\Der \O)_0$ stabilize $\m^i$ and do nothing on the tensor factor lying in $\L$, so that they also stabilize all of the above summands. As the adjoint action of $\widehat \dd$ on $\L'$ lies in $(\Der \O)_0 \otimes \id_\L + \O \otimes D$, and $D$ centralizes $\L^0$, the claim is proved.
\end{proof}

\begin{theorem}
For each $n \geq 0$, the normalizer of $\L'_n$ in $\widetilde \L'$ contains $\L^{\geq 0} + \I + \widehat \dd$.
\end{theorem}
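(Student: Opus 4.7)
The plan is to exploit the fact that the normalizer of any subspace in a Lie algebra is a Lie subalgebra, and in particular a vector subspace; hence it suffices to prove that each of the three summands $\L^{\geq 0}$, $\I$, and $\widehat\dd$ normalizes $\L'_n$ separately. Two of these three verifications are already at hand: that $\I$ normalizes $\L'_n$ is the content of the previous Proposition, and that $\widehat\dd$ normalizes $\L'_n$ is Corollary \ref{centralizelzero}. Only the third claim, that $\L^{\geq 0}$ normalizes $\L'_n$, requires further work.

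For this remaining step, the natural tool is the identification $\L' \simeq \O \,\widehat\otimes\, \L$ together with the fact, recalled just after Lemma \ref{trivialL}, that the Lie bracket on $\L'$ extends $\O$-bilinearly the Lie bracket on $\L\subset\L'$. Combined with the description
$$\L'_n = \sum_{i+j=n,\; i \geq 0,\; j \geq -\ell} \m^i \,\widehat\otimes\, \L_j$$
used in the proof of Corollary \ref{centralizelzero}, $\O$-bilinearity gives $[\xi, \m^i \,\widehat\otimes\, \eta] = \m^i \,\widehat\otimes\, [\xi, \eta]$ for every $\xi \in \L^{\geq 0}$, $\eta \in \L_j$, so the whole problem reduces to the intrinsic claim that $[\L^{\geq 0}, \L_j] \subset \L_j$ inside $\L$, for every $j \geq -\ell$.

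When $L = \Wd, \Sd, \Hd$ this reduction finishes the argument at once: as recalled in Section \ref{nonontrivial}, one has $\L_j = \L^{\geq j}$ in these three cases (so in particular $\L^{\geq 0} = \L_0$), and then $[\L^{\geq 0}, \L^{\geq j}] \subset \L^{\geq j}$ follows directly from the grading property $[\L^i, \L^j] \subset \L^{i+j}$. The only real obstacle is type $K$, where the two filtrations on $\K$ are equivalent but do not literally coincide. Here the strategy is to decompose $\K^{\geq 0} = \K^0 \oplus \K^{\geq 1}$ and treat the pieces separately: the inclusion $\K^{\geq 1} \subset \K_0$ recalled in Section \ref{nonontrivial} combined with $[\K_0, \K_j] \subset \K_j$ handles the positive-degree part, while for the reductive subalgebra $\K^0$ one invokes the corresponding normalizer computation from \cite{BDK2}, which precisely asserts that $\ad \K^0$ preserves each filtration piece $\K_n$. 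Gluing the three verifications, the sum $\L^{\geq 0} + \I + \widehat\dd$ is contained in the normalizer of $\L'_n$, as claimed.
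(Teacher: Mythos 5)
Your proof is correct and follows essentially the same route as the paper: both arguments handle the three summands separately, note that only $\L^{\geq 0}$ requires work beyond the already-established facts about $\I$ and $\widehat\dd$, dispose of types W and S via the coincidence $\L_j = \L^{\geq j}$, and isolate type K as the only case needing extra input. The only cosmetic difference is in type K, where the paper writes $\K^{\geq 0} = \K_0 + \kk\E'$ and adjoins the grading element $\E'$ to the normalizer, while you decompose $\K^{\geq 0} = \K^0 \oplus \K^{\geq 1}$ and cite \cite{BDK2} for $\K^0$ — both rest on the same external fact that the grading is compatible with the filtration $\{\K_n\}$.
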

\begin{proof}
We have proved above that the normalizer contains $\L'_0 + \I + \widehat \dd$. When $L = \Wd, \Sd$, then $\L'_0 = \sum_j \m^j \,{\widehat \otimes}\, \L_{-j}$ so that this equals $\L_0 + \I + \widehat \dd$, and $\L_0 = \L^{\geq 0}$.

When $L = \Kd$, recall that the filtrations $\{\L_n\}$ and $\{\L^{\geq n}\}$ do not coincide. However, the normalizer of $\L'_n$ also contains $1 \otimes \E'$. Now use $\L^{\geq 0} = \L_0 + \E'$ to obtain $1 \otimes \E' + \L'_0 + \I + \widehat \dd = \L^{\geq 0} + \I + \widehat \dd$.
\end{proof}

\subsection{Finite-dimensional irreducible representations of $(\L^{\geq 0} + \I + \widehat \dd)/\L'_n$}

As $\N' = \L^{\geq 0} + \I + \widehat \dd$ normalizes $\L'_n$ in $\widetilde{\L'}$, if $V'$ is an $\widetilde{\L'}$-module and $R \subset V'$ is a finite-dimensional subspace killed by $\L'_n$, then the action of $\N'$ must stabilize $R$. However, if the action of $\N'$ on $R$ is irreducible, a large part of $\N'$ will have to act trivially on $R$.

\begin{proposition}
When $n\geq 0$, the subalgebra $(\L^{>0} + \I + \L'_n)/\L'_n$ is a solvable ideal of $\N'/\L'_n$.
\end{proposition}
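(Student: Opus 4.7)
The plan is to split the claim into two parts: first, verify that $A := \L^{>0} + \I + \L'_n$ is an ideal of $\N'$; second, show that $A/\L'_n$ is solvable by exhibiting it as an extension of one nilpotent Lie algebra by another.

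For the ideal property, I will check $[\N', A] \subset A$ piece by piece. Most brackets are immediate: $[\L^{\geq 0}, \L^{>0}] \subset \L^{>0}$ is the grading property $[\L^i, \L^j] \subset \L^{i+j}$; the inclusions $[\L^{\geq 0}, \I] \subset \I$ and $[\I, \L^{>0}] \subset \I$ follow from $\I$ being an ideal of $\L'$; and $\L^{\geq 0} + \I$ normalizes $\L'_n$ by the results already established in this section. The only nontrivial brackets involve $\widehat\dd$, and here I use the explicit form of $\ad\widehat\d$ obtained in the proof of Corollary \ref{centralizelzero} as an element of $((\Der \O)_0 \otimes \id_\L) + (\O \otimes D)$. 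Since any derivation kills $1$, the first summand annihilates $1 \otimes \L^{>0} \subset \L^{>0}$; and since $D$ (equal either to $0$ or to $\ad \E$) preserves the grading, $(\O \otimes D)(1 \otimes x) \in \O \otimes \L^{>0} \subset \L^{>0} + \I$ for $x \in \L^{>0}$. Hence $[\widehat\dd, \L^{>0}] \subset \L^{>0} + \I$. The same description shows $[\widehat\dd, \I] \subset \I$ (using that $(\Der \O)_0$ stabilizes $\m$), and $[\widehat\dd, \L'_n] \subset \L'_n$ is just Corollary \ref{centralizelzero}.

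For solvability, the same computations show that $\I + \L'_n$ is an ideal of $A$, giving a short exact sequence
\begin{equation*}
0 \to (\I + \L'_n)/\L'_n \to A/\L'_n \to A/(\I + \L'_n) \to 0.
\end{equation*}
The kernel identifies with $\I/(\I \cap \L'_n)$, which is nilpotent: iterating $[\I^i, \I^j] \subset \I^{i+j}$ shows that the $k$th term of the lower central series of $\I$ lies in $\I^k$, and $\I^k \subset \L'_{k - \ell} \subset \L'_n$ as soon as $k \geq n + \ell$. For the quotient, $\L^{>0} \cap \I = 0$ (as $\L^{>0} \subset 1 \otimes \L$ while $\I = \m \,\widehat\otimes\, \L$) together with $A = \L^{>0} + (\I + \L'_n)$ gives $A/(\I + \L'_n) \simeq \L^{>0}/(\L^{>0} \cap (\I + \L'_n))$; choosing $M$ so large that $\L^{\geq M} \subset \L_n \subset \L'_n$ (using $\L^{\geq m} = \L_m$ in types W, S, H, and the analogous inclusion $\L^{\geq m} \subset \L_{\lfloor (m-1)/2 \rfloor}$ in type K) and noting that the $k$th term of the lower central series of $\L^{>0}$ is contained in $\L^{\geq k}$, one sees that this second quotient is nilpotent as well. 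Since an extension of nilpotent Lie algebras is solvable, the proof is complete.

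I expect the main obstacle, modest as it is, to be the bookkeeping around the $\widehat\dd$-brackets: everything else reduces to inclusions already established or to the grading $[\L^i, \L^j] \subset \L^{i+j}$, but one really needs the decomposition \eqref{sdsd} of $\Der \L'$ and the resulting explicit form of $\ad \widehat\d$ to handle $[\widehat\dd, \L^{>0}]$, which is \emph{not} contained in $\L^{>0}$ alone — the $(\O \otimes D)$-contribution in type S forces one to absorb the overflow into $\I$, which is exactly the reason $\I$ has to be part of the ideal $A$ in the statement.
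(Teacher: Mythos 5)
Your proof is correct and follows essentially the same route as the paper's: the ideal $(\I + \L'_n)/\L'_n$ is shown nilpotent via the lower central series of $\I$ descending through the powers $\I^k \subset \L'_{k-\ell}$, and the quotient by it is shown nilpotent via the lower central series of $\L^{>0}$ descending through $\L^{\geq k}$ and eventually into $\L'_n$. The only difference is that you spell out the brackets $[\widehat\dd, \L^{>0}]$ and $[\widehat\dd, \I]$ explicitly using the decomposition \eqref{sdsd}, a verification the paper leaves implicit.
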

\begin{proof}
First of all, $$\L'_n \subset \L'_0 \subset \L_0 + \I \subset \L^{\geq 0} + \I \subset \N',$$
so that $\L'_n$ is an ideal of $\N'$. We already know that $\I$ is an ideal of the whole $\widetilde \L'$, so $\I + \L_n$ is an ideal of $\N'$ as well. Furthermore,
$[\I^j, \I^k] \subset \I^{j+k}$ forces the lower central sequence of $\I$ to lie in $\I^{n+1} \subset \L'_{n+2-\ell} \subset \L'_n$ after $n+1$ steps, so that $(\I + \L'_n)/\L'_n$ is a nilpotent, hence solvable, ideal of $\N'/\L'_n$. We are thus left with showing that $\L^{>0}$ projects to a solvable ideal of $\N'/(\I + \L'_n)$.

Now, $\L^{>0}$ is a subalgebra of $\L \subset \L'$ and the adjoint action of $\L^{\geq 0} + \I + \L'_n$ maps it to $\L^{>0} + \I + \L'_n$, whence $\L^{>0}$ projects to an ideal of $\N'/(\I + \L'_n)$. Again, $[\L^{\geq j}, \L^{\geq k}] \subset \L^{\geq j+k}$ and $\L^{\geq i} \subset \L_{\lfloor (i-1)/2\rfloor}$, forces the lower central sequence of $\L^{>0}$ to lie inside $\L_n \subset \L'_n \subset \I + \L'_n$ within $2n+1$ steps. Thus $\L'_n$ projects to a solvable ideal of $\N'/(\I + \L'_n)$, proving the claim. 
\end{proof}

\begin{proposition}\label{lposplusi}
Let $R$ be a finite-dimensional irreducible $\N'$-module, with a trivial action of $\L'_n$, where $n \geq 0$. Then $\L^{>0} + \I$ acts trivially on $R$.
\end{proposition}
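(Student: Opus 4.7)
The plan is to invoke Lie's theorem on the solvable ideal identified in the previous proposition, and then use the commutator identities of Proposition~\ref{onlytrivial} to force the resulting character to vanish.

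First, by the previous proposition, $\mathfrak{a}:=(\L^{>0}+\I+\L'_n)/\L'_n$ is a solvable ideal of $\N'/\L'_n$. Since $R$ is a finite-dimensional irreducible representation of $\N'/\L'_n$ over the algebraically closed field $\kk$ of characteristic zero, the standard extension of Lie's theorem yields a Lie algebra homomorphism $\chi:\mathfrak{a}\to\kk$ such that each $x\in\mathfrak{a}$ acts on $R$ as $\chi(x)\id_R$; pulling back along the projection, I regard $\chi$ as a character of $\L^{>0}+\I$ vanishing on $(\L^{>0}+\I)\cap\L'_n$, and it suffices to show that $\chi\equiv 0$. The usual trace computation together with irreducibility of $R$ forces $\chi$ to be $\N'$-invariant, i.e.\ $\chi([z,x])=0$ for every $z\in\N'$ and $x\in\L^{>0}+\I$. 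In particular, since $\L^0\subset\L\subset\L'\subset\N'$, the character $\chi$ vanishes on $[\L^0,\L^{>0}+\I]$; being itself a homomorphism into the abelian algebra $\kk$, it also vanishes on $[\L^{>0}+\I,\L^{>0}+\I]$.

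Next, I would decompose $\L^{>0}+\I$ under the adjoint action of $\L^0$. Every non-trivial irreducible $\L^0$-summand $V$ of this decomposition satisfies $[\L^0,V]=V$ by irreducibility, so $\chi(V)=0$ by $\N'$-invariance. By $\O$-bilinearity of the bracket on $\L'\simeq \O\,{\widehat\otimes}\,\L$, the $\L^0$-isotypical decomposition of $\I=\m\,{\widehat\otimes}\,\L$ is obtained from that of $\L$; combined with Lemma~\ref{trivialL}, which shows that the only trivial $\L^0$-summand of $\L$ lies in degree $0$, I conclude that $\L^{>0}$ contains no trivial summand at all and that the trivial isotypical component of $\L^{>0}+\I$ is entirely contained in $\I$ and equals $\m\otimes\kk\E$ when $L=\Wd$, $(0)$ when $L=\Sd$, and $\m\otimes\kk\E'$ when $L=\Kd$.

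Finally, Proposition~\ref{onlytrivial} places this trivial component inside a commutator of the form $[\O\,{\widehat\otimes}\,\L^{>0},\,\m\,{\widehat\otimes}\,\L^\perp]$. Observing that $\O\,{\widehat\otimes}\,\L^{>0}=\L^{>0}+\m\,{\widehat\otimes}\,\L^{>0}\subset\L^{>0}+\I$ and $\m\,{\widehat\otimes}\,\L^\perp\subset\I$, this commutator lies in $[\L^{>0}+\I,\,\L^{>0}+\I]$ and is therefore killed by $\chi$. Combined with the preceding step, this gives $\chi\equiv 0$ on $\L^{>0}+\I$, which is exactly the claim. The only modest technical point is bookkeeping the compatibility of the $\L^0$-isotypical decomposition with the $\O$-completion; the $\Hd$ case is deliberately excluded here because the extra trivial $\L^0$-summand $Z(\P)$ of $\P$ sits in degree $-2$ and falls inside a deeper commutator, which requires the finer analysis carried out later in the paper.
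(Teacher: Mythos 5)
Your proof is correct and takes essentially the same route as the paper's, which likewise applies Lie's theorem (packaged as \cite[Lemma 3.4]{BDK1}) to the solvable ideal $(\L^{>0}+\I+\L'_n)/\L'_n$ to kill all non-trivial $\L^0$-isotypic components, and then uses Proposition \ref{onlytrivial} to place the residual trivial component $\m\otimes\kk\E$ (resp.\ $\m\otimes\kk\E'$) inside $[\O\,{\widehat\otimes}\,\L^{>0},\m\,{\widehat\otimes}\,\L^\perp]\subset[\L^{>0}+\I,\L^{>0}+\I]$, where the character must vanish. The only blemish is the inclusion chain $\L^0\subset\L\subset\L'\subset\N'$, which fails at the last step since $\L^{<0}\not\subset\N'$; the inclusion you actually use, $\L^0\subset\L^{\geq 0}\subset\N'$, does hold.
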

\begin{proof}
We already know that $\L^{>0} + \I$ projects to a solvable ideal of the finite-dimensional Lie algebra $\N'/\L_n$. Then we may use Proposition \ref{onlytrivial} and the argument in \cite[Lemma 3.4]{BDK1} applied to both the semisimple part and the center of the reductive Lie algebra $\L^0 \subset \N$ in order to show that
$(\L^{>0} + \I) \cap \O \,{\widehat \otimes}\, \L^\perp$ acts trivially on $R$.

However, both $\L^{>0}$ and $\m \,{\widehat \otimes}\, \L^\perp$ lie inside $\O \,{\widehat \otimes}\, \L^\perp$, so they need to act trivially on $R$. Recall now that $\I = \m \,{\widehat \otimes}\, \L^\perp + \m \otimes D$. We may then use Proposition \ref{onlytrivial} to argue that elements from $\m \otimes  D \subset [\O \,{\widehat \otimes}\, \L^{>0}, \m \,{\widehat \otimes}\, \L^\perp]$ arise as Lie brackets of elements acting trivially on $R$. We thus conclude that both $\L^{>0}$ and $\I$ act trivially on $R$.
\end{proof}

\subsection{Irreducible representations and tensor modules}

As usual, $L$ is a primitive simple Lie $H$-pseudoalgebra not isomorphic to $\Hd$ and $L' = \Cur_H^{H'} L$ is the corresponding current simple Lie pseudoalgebra. 

Let $V'$ be a finite irreducible
representation of $L$, which has a nontrivial pseudoaction  by definition. Then $\ker_n V' := \{v \in V'\,|\, \L'_n v = 0\}$ is a finite dimensional vector space for all choices of $n$,
and is nonzero for sufficiently large values of $n$, as $V'$ is a discrete continuous representation of $\L'$. As $\N'$ normalizes $\L'_n$, its action on $V'$ preserves $\ker_n V'$. Pick a minimal nonzero, hence irreducibile, $\N'$-submodule $R \subset V'$. Then the action of $\N' = \L^{\geq 0} + \I + \widehat \dd$ factors through the quotient $\N'/(\L^{>0} + \I) \simeq (\widehat \dd + \L^{\geq 0})/\L^{>0}$, which is isomorphic to the direct sum Lie algebra $\dd \oplus \L^0$ by Corollary \ref{centralizelzero}. In particular, $\L'_1 \subset \L^{>0} + \I$ acts trivially on $R$, showing $R \subset \ker_1 V'$.

\begin{theorem}\label{currenttensor}
Let $V'$ be a finite irreducible representation of $L' = \Cur_H^{H'} L$, where $L\neq \Hd$ is a primitive finite simple Lie pseudoalgebra. Then there exists an irreducible finite-dimensional $\widehat \dd \oplus \L^0$-module $R$ such that $V'$ is a quotient of the current tensor module $\Cur_H^{H'} \V(R)$ by a maximal submodule, where $\V(R)$ is a tensor module for $L$ as in Sections \ref{wdtm}-\ref{kdtm} or a finite-dimensional representation of the Lie algebra $L$ when $\dd = (0)$.
\end{theorem}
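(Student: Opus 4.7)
The plan is to realize $V'$ as a quotient of the current tensor module $\Cur_H^{H'}\V(R)$, where $R\subset V'$ is a suitable finite-dimensional irreducible $\N'$-submodule on which $\L^{>0} + \I$ acts trivially.

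First I would translate the problem to the annihilation algebra via Theorem \ref{repdescription}, viewing $V'$ as a discrete continuous irreducible representation of $\widetilde \L'$. Non-triviality of the $L'$-action on $V'$, together with the proposition at the end of Section \ref{annalg}, produces a finite-dimensional nonzero $\ker_n V'$ for some $n \geq 1$. Since $\N' = \L^{\geq 0} + \I + \widehat \dd$ normalizes $\L'_n$, it stabilizes $\ker_n V'$, and I may choose a minimal nonzero, hence irreducible, $\N'$-submodule $R\subset \ker_n V'$. Proposition \ref{lposplusi} guarantees that $\L^{>0} + \I$ acts trivially on $R$; by Corollary \ref{centralizelzero}, $R$ factors through the quotient $\N'/(\L^{>0} + \I) \simeq \widehat \dd \oplus \L^0$. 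Transferring along the Lie algebra isomorphism $\eta: \dd \isoto \widehat \dd$, I obtain the irreducible finite-dimensional $\dd \oplus \L^0$-module parametrizing the tensor module $\V(R)$ of Sections \ref{wdtm}--\ref{kdtm} (and reducing to $\V(R) = R$ in the degenerate case $\dd = (0)$, where $L = \g$ is a finite-dimensional simple Lie algebra).

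Next I would construct an $H'$-linear, $L'$-equivariant map
\[
\Phi: \Cur_H^{H'}\V(R) \longrightarrow V'
\]
determined by $\Phi(1 \otimes_H (1 \otimes r)) = r$ for $r \in R$. Comparing the pseudoaction formulas of Sections \ref{wdtm}--\ref{kdtm}, extended by the current functor via Proposition \ref{currcorrespond} and Remark \ref{curroffree}, with the $\widetilde \L'$-action on $R \subset V'$, one checks that the $\N'$-action on $1 \otimes_H (1 \otimes R) \subset \Cur_H^{H'}\V(R)$ also factors through the quotient $\widehat \dd \oplus \L^0$ and recovers the original action on $R$. The correction $\d \mapsto \widehat{\d} = \d - [\pi_2(\ad \d)]$ is built in specifically to match the $\widehat \dd$-action on $R \subset V'$ (where $\widehat \dd$ centralizes $\L^0$) against the $\dd$-action on $\V(R)$ underlying the tensor module construction. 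This matching, via the universal property of the current tensor module as an $H'$-module freely generated by its bottom, produces the desired $L'$-morphism $\Phi$.

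Since $\Phi(\Cur_H^{H'}\V(R)) \supseteq R \neq 0$ and $V'$ is irreducible, $\Phi$ is surjective; hence $V' \simeq \Cur_H^{H'}\V(R)/\ker \Phi$ with $\ker \Phi$ a maximal $L'$-submodule. The main obstacle I anticipate is the compatibility check in the previous paragraph, since elements $\widehat \d$ lie outside $\L'$ and act through derivations of $\O$ plus an inner piece; showing that the $\O$-derivation part corresponds precisely to left multiplication by $\d \in \dd \subset \dd'$ on $H' \otimes_H (H \otimes R) = \Cur_H^{H'}\V(R)$, modulo the explicit action on $R$, requires unraveling the PBW decomposition of $H'$ over $H$ and the splitting $\L' \simeq \O\,\widehat{\otimes}\, \L$ used throughout Section \ref{scurrent}.
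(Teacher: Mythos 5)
Your first half coincides with the paper's argument: locate a finite-dimensional irreducible $\N'$-submodule $R\subset\ker_n V'$, apply Proposition \ref{lposplusi} and Corollary \ref{centralizelzero} to see that the $\N'$-action on $R$ kills $\L^{>0}+\I$ and factors through $\widehat\dd\oplus\L^0\simeq\dd\oplus\L^0$. The divergence, and the gap, lies in the second half. You propose to \emph{define} an $L'$-morphism $\Phi:\Cur_H^{H'}\V(R)\to V'$ by prescribing $\Phi(1\otimes_H(1\otimes r))=r$ and then to \emph{verify} equivariance; but an $H'$-linear map prescribed on a generating set has no reason to be a morphism of $L'$-modules, and you yourself flag the verification as ``the main obstacle'' without carrying it out. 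That verification is not a routine compatibility check --- it is the actual content of the theorem, namely that the $\widetilde\L'$-module generated by $R$ inside $V'$ carries precisely the current tensor module pseudoaction.

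The paper avoids the well-definedness problem by running the construction in the opposite direction. Since $R\hookrightarrow V'$ is $\N'$-equivariant and $V'$ is irreducible, Frobenius reciprocity gives a surjection $\Ind_{\N'}^{\widetilde\L'}R\twoheadrightarrow V'$ with no equivariance left to check, and $\widetilde\L'=\dd'+\N'$ identifies the induced module with $H'\otimes R$ as a left $H'$-module. The remaining work is to recognize the pseudoaction on $H'\otimes R$ furnished by Theorem \ref{repdescription}: because $\I$ kills $R$, in the sum $\sum_K(S(\d^{(K)})\otimes 1)\otimes_{H'}\bigl(1\otimes(x_K\otimes_{H'}s)\cdot u\bigr)$ only the Fourier coefficients with $x_K\in X\subset X'$ (i.e.\ $K$ supported on the $\dd$-indices) survive; these lie in $\L\subset\L'$, so the surviving expression is the $L$-tensor-module pseudoaction with $\otimes_H$ replaced by $\otimes_{H'}$, and Proposition \ref{currcorrespond} identifies $H'\otimes R$ with $\Cur_H^{H'}\V(R)$. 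If you wish to keep your direction of the map, you must first establish this identification of $\Ind_{\N'}^{\widetilde\L'}R$ with $\Cur_H^{H'}\V(R)$ --- after which your $\Phi$ is simply the canonical surjection from the induced module and nothing remains to verify.
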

\begin{proof}
We only deal with the $\dd \neq (0)$ case, as the Lie algebra case is completely analogous. Recall that $\ker_1 V'$ is finite-dimensional. If $0 \neq R \subset \ker_1 V'$ is a minimal, hence irreducible, $\N'$-submodule, then the induced module $\Ind_{\N'}^{\widetilde \L} R$ maps surjectively on $V'$ by irreducibility.
As $\widetilde \L' = \dd + \N'$, then $\Ind_{\N'}^{\widetilde \L} R \simeq \ue(\d') \otimes R$, and it is thus a finite left $H'$-module by multiplication on the first tensor factor. 

We know by Proposition \ref{lposplusi} that $\L^{>0} + \I$ acts trivially on $R$, so we may use Theorem \ref{repdescription} to write down the Lie pseudoalgebra action of $L'$ on $H' \otimes R$. This yields
\begin{align}
(1 \otimes_H s) * (1 \otimes u) & = \sum_{K\in \NN^{N+r}} (S(\d^{(K)}) \otimes 1) \otimes_{H'} (1 \otimes (x_K \otimes_{H'} s)\cdot u)\label{curraction}\\
& = \sum_{K \in 0 \times \NN^{N}} (S(\d^{(K)}) \otimes 1) \otimes_{H'} (1 \otimes (x_K \otimes_{H'} s)\cdot u),\label{curraction2}
\end{align}
where $s \in S$, $u \in R$ and $S \subset L$ is as in Section \ref{sprimitive}.
As the only $S(\d^{(K)})$ involved in \eqref{curraction2} lie in $\dd$, and the corresponding Fourier coefficients $x_K \otimes_{H'} s$ all lie in $\L \subset \L'$, we may use Proposition \ref{currcorrespond} to conclude that the pseudoaction
\begin{equation*}
s * (1 \otimes u) = \sum_{K \in \NN^{N}} (S(\d^{(K)}) \otimes 1) \otimes_{H} (1 \otimes (x_K \otimes_{H'} s)\cdot u)
\end{equation*}
defines a Lie pseudoalgebra representation of $L$ on $H\otimes R$, which coincides with the corresponding $L$-tensor module $\V(R)$. Then the $L'$-representation defined by \eqref{curraction} coincides with $\Cur_H^{H'} \V(R)$.
\end{proof}

\begin{corollary}\label{irrcurriscurrirr}
Every finite irreducible representation of $L' = \Cur_H^{H'} L$, where $L\neq \Hd$ is a primitive Lie pseudoalgebra, is of the form $\Cur_H^{H'} V$, for some finite irreducible $L$-module $V$.
\end{corollary}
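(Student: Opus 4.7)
The plan is to combine \thref{currenttensor} with \coref{maxstructure} in a direct way. Since $V'$ is a finite irreducible $L'$-module (with nontrivial pseudoaction by definition of irreducibility), \thref{currenttensor} provides a finite-dimensional irreducible $\widehat\dd \oplus \L^0$-module $R$ and a surjection
\[
\Cur_H^{H'}\V(R) \twoheadrightarrow V',
\]
so that $V' \simeq \Cur_H^{H'}\V(R)/M$ for some maximal $L'$-submodule $M \subsetneq \Cur_H^{H'}\V(R)$.

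Next I would apply \coref{maxstructure} to this $M$. There are two possibilities: either $M_0 \subsetneq \V(R)$ is a maximal $L$-submodule and $M = H' \otimes_H M_0$; or $M_0 = \V(R)$ and the action of $L'$ on $V' = \Cur_H^{H'}\V(R)/M$ is trivial. The second possibility is immediately ruled out by the irreducibility hypothesis on $V'$, which by our convention requires a nontrivial pseudoaction. Hence we are in the first case, and $M = H' \otimes_H M_0$ with $M_0 \subsetneq \V(R)$ maximal as an $L$-submodule.

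Using exactness of $\Cur_H^{H'} = H' \otimes_H -$ (which follows from freeness of $H'$ as a right $H$-module, as noted in the paper), we obtain
\[
V' \simeq \Cur_H^{H'}\V(R)\,/\,H'\otimes_H M_0 \simeq \Cur_H^{H'}\bigl(\V(R)/M_0\bigr).
\]
Set $V := \V(R)/M_0$. It remains to check that $V$ is a finite irreducible $L$-module. Finiteness is automatic since $\V(R)$ is a finite $H$-module. For irreducibility: by maximality of $M_0$, any $L$-submodule of $V$ is either $(0)$ or $V$, and the $L$-action on $V$ must be nontrivial — otherwise the current action on $\Cur_H^{H'}V \simeq V'$ would be trivial as well, contradicting irreducibility of $V'$. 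Thus $V$ qualifies as a finite irreducible $L$-module, completing the proof.

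I do not anticipate an obstacle here: all the heavy lifting has already been done in \thref{currenttensor} (locating a tensor module above $V'$) and in \coref{maxstructure} (classifying maximal submodules of current modules). The corollary is essentially a bookkeeping step combining the two, with the nontriviality clause in the definition of irreducibility being precisely what eliminates the degenerate branch of \coref{maxstructure}.
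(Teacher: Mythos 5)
Your proof is correct and follows essentially the same route as the paper: Theorem \ref{currenttensor} to realize $V'$ as a quotient of a current tensor module, then Corollary \ref{maxstructure} to identify the maximal submodule as a current of a maximal $L$-submodule, and exactness of $\Cur_H^{H'}$ to conclude. The only cosmetic difference is that the paper first dispatches the case where $\V(R)$ is already irreducible via Corollary \ref{currirrisirr}, whereas you absorb that case into the uniform argument through Corollary \ref{maxstructure}, which works equally well.
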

\begin{proof}
By Theorem \ref{currenttensor}, every finite irreducible $L'$-module $V'$ arises as a quotient of $\Cur_H^{H'} \V(R)$ for an opportune choice of a tensor module $\V(R)$ for the primitive Lie pseudoalgebra $L$. If $\V(R)$ is irreducible, then $\Cur_H^{H'}\V(R)$ is irreducible as well by Corollary \ref{currirrisirr}. Thus $\V(R) \simeq \Cur_H^{H'}\V(R)$ and we are done.

If $\V(R)$ is not irreducible, then we may find a maximal $L'$-submodule $M \subset \Cur_H^{H'} \V(R)$ such that $V' \simeq \V(R)/M$. As the $L'$-action on $V'$ is nontrivial, we may apply Corollary \ref{maxstructure} and conclude that $M = \Cur_H^{H'} M_0$, where $M_0\subset \V(R)$ is a maximal $L$-submodule. Then $V' = (\Cur_H^{H'} \V(R)) / (\Cur_H^{H'} M_0) \simeq \Cur_H^{H'} (\V(R)/M_0)$ and $V:= \V(R)/M_0$ is an irreducible $L$-module. Finiteness, i.e, Noetherianity, of $V$ follows easily from finiteness of $V'$ and exactness of the current functor.
\end{proof}

\section{Irreducible representations of current Lie pseudoalgebras of type H}\label{shtype}

In this section, we review the strategy of Section \ref{notH} in the case of the simple Lie pseudoalgebra $\Cur_H^{H'} \Hd$, highlighting the relevant differences, which depend on the presence of nontrivial central elements in the corresponding annihilation algebras.
Once again, we employ the notation introduced in Sections \ref{shds} and \ref{hdtm}, so that $L = \Hd = He$ satisfies
$$[e * e] = \sum_k (\overline{\d_k} \otimes \overline{\d^k}) \otimes_H e.$$

Its annihilation algebra $\P \simeq P_N$ is an irreducible central extension of the simple linearly compact Lie algebra $H_N$ from the Cartan list. More precisely, $Z(\P)$ is one-dimensional, spanned by $e^{-\chi} \otimes_H e$, and $\H:=\P/Z(\P)$.

\subsection{Derivations of $\O \,{\widehat \otimes}\, P_N$}

It is well known that $\Der \H = \kk \ad \E + \ad \H$, where $\ad\E$ is the semisimple derivation of $H_N$ inducing the standard grading. As $\P\simeq P_N$ is itself a graded Lie algebra, and its grading is compatible with that of its quotient $H_N$, we will also denote by $\ad \E$ the corresponding derivation of $P_N$. Then
\begin{proposition}\label{derp}
One has $\Der \P = \kk \ad \E \sd \ad \P$. Furthermore,
$$\Der(\O \,{\widehat \otimes} \P) = (\Der \O \otimes \id_\P) \sd (\O \,{\widehat \otimes}\, \Der \P) = (\Der \O \otimes \id_\P \sd \O \otimes \ad \E) \sd \O \,{\widehat \otimes}\, \ad \P.$$
\end{proposition}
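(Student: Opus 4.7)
The plan is to first establish $\Der \P = \kk\ad \E \sd \ad \P$ by descent from the well-known description $\Der \H = \kk\ad \E \sd \ad \H$ for $\H \simeq H_N$, and then to extend the analysis from $\P$ to $\tilde \P := \O\,\widehat{\otimes}\,\P$ by a parallel lifting argument, invoking \cite[Proposition 6.12(ii)]{BDK} on the quotient $\O\,\widehat{\otimes}\,\H$, which is a completed tensor product of a simple linearly compact Lie algebra.

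For the first equality, since $\P$ is the unique irreducible central extension of the simple $\H$, it is perfect, $\P = [\P,\P]$, and $Z(\P)$ is its unique proper ideal. Every derivation $d \in \Der \P$ preserves $Z(\P)$ and thus descends to $\bar d \in \Der \H$, producing a Lie algebra homomorphism $\pi \colon \Der \P \to \Der \H$. Perfectness forces injectivity: if $\pi(d) = 0$ then $d(\P) \subset Z(\P)$, so $d([a,b]) = [d(a),b] + [a,d(b)] = 0$ for all $a,b \in \P$, and $d$ vanishes on $[\P,\P] = \P$. For surjectivity, $\ad_\P \E$ descends to $\ad_\H \E$ (using compatibility of the grading of $\P$ with that of $\H$), and any $\ad_\H x$ is the descent of $\ad_\P \tilde x$ for any preimage $\tilde x \in \P$ of $x$. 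Since $\ad_\H \E$ is non-inner on $\H$, its lift $\ad_\P \E$ does not lie in $\ad \P$, giving the direct vector-space sum $\Der \P = \kk\ad \E \oplus \ad \P$; the semidirect-sum structure follows from $[\ad \E, \ad x] = \ad[\E,x] \in \ad \P$.

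For the second statement, I would first verify that $Z(\tilde \P) = \O \otimes Z(\P)$ (no completion is needed on the central factor since $\dim Z(\P) < \infty$), and that $\tilde \P$ is perfect: any $f \otimes x$ with $x = \sum_i [a_i,b_i]$ equals $\sum_i [1 \otimes a_i, f \otimes b_i]$, so perfectness of $\P$ transfers to $\tilde \P$ by $\O$-bilinearity of the bracket. The descent argument of the previous paragraph then applies verbatim to show that $\Der \tilde \P \to \Der(\tilde \P/Z(\tilde \P)) = \Der(\O\,\widehat{\otimes}\,\H)$ is injective. Since $\H$ is simple linearly compact, \cite[Proposition 6.12(ii)]{BDK} yields
$$\Der(\O\,\widehat{\otimes}\,\H) = (\Der \O \otimes \id_\H) \sd (\O\,\widehat{\otimes}\,\Der \H).$$
I would then lift each summand to $\Der \tilde \P$: elements of $\Der \O \otimes \id_\P$ act on the first tensor factor alone; elements of $\O \otimes \kk\ad \E$ lift via $\ad_\P \E$ on the second factor; and elements of $\O\,\widehat{\otimes}\,\ad \H$ lift to the inner derivations $\O\,\widehat{\otimes}\,\ad \P$ of $\tilde \P$. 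Combining these lifts and substituting the decomposition $\Der \P = \kk\ad \E \sd \ad \P$ from the previous paragraph yields both claimed forms of $\Der \tilde \P$.

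The main obstacle will be handling the completions rigorously: confirming that $Z(\tilde \P)$ really is $\O \otimes Z(\P)$ with no completion on the central factor, that every derivation of the linearly compact Lie algebra $\tilde \P$ is automatically continuous and descends to a continuous derivation of $\O\,\widehat{\otimes}\,\H$ to which \cite[Proposition 6.12(ii)]{BDK} applies, and that the three pieces $(\Der \O \otimes \id_\P)$, $(\O \otimes \ad \E)$, $(\O\,\widehat{\otimes}\,\ad \P)$ assemble into the asserted semidirect sum, i.e., that $\O\,\widehat{\otimes}\,\ad \P$ is an ideal while the complementary piece is a subalgebra.
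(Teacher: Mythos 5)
Your proposal is correct and follows essentially the same route as the paper: descend a derivation of $\O\,\widehat{\otimes}\,\P$ to the central quotient $\O\,\widehat{\otimes}\,\H$, apply \cite[Proposition 6.12(ii)]{BDK} there, lift back, and kill the ambiguity using perfectness of $\P$ together with the observation that a derivation of a perfect Lie algebra with image in the center vanishes (the paper isolates this as Lemma \ref{dercenter}, which is exactly your injectivity argument). The only cosmetic difference is that you treat the case $\O=\kk$ first and then redo the argument in general, whereas the paper proves the general statement and obtains the first claim as the special case $r=0$.
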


\begin{lemma}\label{dercenter}
Let $\g$ be a Lie algebra, $d\in \Der \g$ a derivation whose image is contained in the center $Z(\g)$. If $\g = [\g, \g]$, then $d = 0$.
\end{lemma}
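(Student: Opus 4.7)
The plan is to exploit the Leibniz rule for derivations together with the fact that the image of $d$ lands in the center. First I would observe that for any $x, y \in \g$, the derivation property gives
\begin{equation*}
d([x,y]) = [d(x), y] + [x, d(y)].
\end{equation*}
Since $d(x), d(y) \in Z(\g)$ by assumption, both summands on the right-hand side vanish, yielding $d([x,y]) = 0$ for every $x, y \in \g$.

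Next I would use the hypothesis $\g = [\g, \g]$: every element of $\g$ is a finite $\kk$-linear combination of commutators $[x_i, y_i]$, so by linearity $d$ vanishes on all of $\g$. That is, $d = 0$.

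I do not foresee any real obstacle; this is a one-line Leibniz computation followed by invoking perfectness of $\g$. The only mild care needed is to stress that $\g = [\g,\g]$ is to be interpreted as a $\kk$-linear span (not just the set of single commutators), but this is automatic from the standard convention. The lemma will be applied in the sequel to the Lie algebras $\P$ and $\O\,{\widehat\otimes}\,\P$, which are known to equal their derived subalgebras, in order to control derivations with image in their centers.
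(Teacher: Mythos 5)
Your argument is exactly the paper's: apply the Leibniz rule, note both bracket terms vanish because $d(x),d(y)\in Z(\g)$, so $d$ kills all commutators, and conclude from $\g=[\g,\g]$ (understood as a $\kk$-linear span) that $d=0$. The proof is correct and coincides with the one in the paper.
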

\begin{proof}
One has $d[x, y] = [d(x), y] + [x, d(y)]$. As $\im d \subset Z(\g)$, then $d[x, y] = 0$ for every $x, y \in \g$. 
\end{proof}

\begin{proof}[Proof of Proposition \ref{derp}]
As $\H \simeq H_N$ is a linearly compact simple Lie algebra, we already know from \cite[Proposition 6.12 (ii)]{BDK} that $\Der(\O \,{\widehat \otimes} \H) = \Der \O \otimes \id_\H + \O \,{\widehat \otimes}\, \Der \H$.

Let $\delta
 \in \Der(\O \,{\widehat \otimes} \P)$. Every derivation of a Lie algebra stabilizes its center, so $\delta$ induces a derivation $\widetilde \delta$ of $(\O \,{\widehat \otimes} \P)/Z(\O \,{\widehat \otimes} \P) \simeq \O \,{\widehat \otimes} \H$. Then there exist $d\in \Der \O, \overline{w} \in \H, c \in \kk,$ such that 
$${\widetilde \delta}(\phi \otimes \overline{x}) = d(\phi) \otimes x + \phi \otimes ([\overline{w}, \overline{x}] + c\ad \E(\overline{x})),$$

Choose a lifting $w\in \P$ of $\overline{w}\in \H$. Then
$$\delta(\phi \otimes x) \equiv d(\phi) \otimes x + \phi \otimes ([w, x] + c \ad \E (x)) \mod Z(\O\,{\widehat \otimes} \P),$$
so that $\delta - (d \otimes 1 + \phi \otimes (\ad w + c \ad \E))$ is a derivation of $\O \,{\widehat \otimes} \P$ mapping everything to the center. However, $\P$ equals its derived Lie algebra, so the same holds of $\O \,{\widehat \otimes} \P$ and we may use Lemma \ref{dercenter} to conclude. The case $r=0, \O = \kk$ takes care of the first claim.
\end{proof}

\subsection{The normalizer of $\P'_n, n>0,$ in $\widetilde{\P'}$}

Let $L = \Hd$, $L' = \Cur_H^{H'} L$ and denote by $\P, \P'$ the corresponding annihilation algebras. We have seen that $\iota^*:X' \to X$ induces a projection $\P' \to \P$ which admits a splitting; we thus obtain a subalgebra $\P \subset \P'$ and the Lie bracket on $\P' \simeq \O \,{\widehat \otimes}\, \P$, where $\O = (H'H_+)^\perp \subset X'$, extends $\O$-bilinearly that of $\P$. If $I = (H'H)^\perp$, then $X' = X + I$ and $\P' = \P \sd \I$, where $\I = I \otimes_H L$. Notice that $\I$ corresponds to $\m \,{\widehat \otimes} \P$ under the identification $\P' \simeq \O\,{\widehat \otimes} \P$. As usual, we denote by $\{\P_j\}, \{\P'_j\}$ the filtrations of $\P, \P'$ defined as in Section \ref{annalg}, see also the beginning of Section \ref{notH}. We already know that $\I$ and $\P'_0$ both normalize each $\P'_n$. We also denote by $\P^k$ the graded component of $\P$ of degree $k\geq 2$.

Choose $\d \in \dd \subset \dd' \subset \dd \sd \P' =: \widetilde{\P'}$. Then the adjoint action of $\d$ on $\P'$ defines a derivation of $\P' \simeq \O\,{\widehat \otimes} \P$. Using Proposition \ref{derp} we understand that projecting $\Der \O\,{\widehat \otimes} \P$ on the first summand of the semidirect decomposition
$$\Der(\O\,{\widehat \otimes} \P) = (\Der \O \otimes \id_\P + \O \otimes \ad \E) \sd \O\,{\widehat \otimes} \ad \P$$
defines a Lie algebra homomorphism, so that we may find $\gamma(\d) \in \P'$ so that the adjoint action of the difference $\widehat{\d} = \d - \gamma(\d)\in \widetilde \P'$ on $\P'= \O  \,{\widehat \otimes} \P$ is induced by an element of $\Der \O$ acting on the first tensor factor plus a suitable $\O$-multiple of $\ad \E$. Notice that $\gamma(\d)$ is only determined modulo $Z(\P') = \O \otimes Z(\P)$, but we may choose it in a unique way if we demand that $\gamma(\d) \in \O \,{\widehat \otimes} \L_{-1}.$ Denote by $\widehat{\dd}$ the Lie subalgebra of $\widetilde{\L'}$ generated by elements $\widehat{\d}, \d\in \dd$. Notice that this is \underline{not} $\kk$-linearly generated by the above elements.

\begin{proposition}
The adjoint action of $\widehat{\dd}$ on $\P'$ normalizes $\P$ and centralizes $\P^0$. The map $\dd \ni \d \mapsto [\widehat{\d}] \in \widehat{\dd}/(\widehat{\dd} \cap Z(\P'))$ is a Lie algebra isomorphism.
\end{proposition}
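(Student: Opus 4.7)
The plan is to read off both claims from the decomposition $\ad\widehat{\d} = d^{(\d)}\otimes \id_\P + f^{(\d)}\,\ad\E$ in $(\Der\O\otimes\id_\P)\sd(\O\otimes\ad\E)$ afforded by Proposition~\ref{derp}, and then to propagate from the generators $\widehat{\d}$ to the whole Lie subalgebra $\widehat\dd$ using that $\pi_1$ is a Lie algebra homomorphism.

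For the first claim, I first observe that $\ad\widehat\d$ stabilizes the ideal $\I = \m\,{\widehat\otimes}\,\P$ of $\P'$: the $\d$-contribution does so by Proposition~\ref{dactonxprime}(iv), and $\ad\gamma(\d)$ does so because it is inner and $\I$ is an ideal. Applying the decomposition to $g\otimes y\in\m\,{\widehat\otimes}\,\P$ yields $d^{(\d)}(g)\otimes y + f^{(\d)}g\otimes\ad\E(y)$, so this same stability additionally forces $d^{(\d)}\in(\Der\O)_0$, and thus $\ad\widehat\d$ descends to a well-defined action on $\P'/\I\simeq\P$. For centralization of $\P^0$, recall that $\ad\E$ acts by the grading index and hence annihilates $\P^0$; applying the decomposition to $1\otimes y$ with $y\in\P^0$ gives
\[
[\widehat{\d},1\otimes y] = d^{(\d)}(1)\otimes y + f^{(\d)}\otimes\ad\E(y) = 0.
\]
Both stabilization of $\I$ and vanishing of brackets with $1\otimes\P^0$ are preserved under Lie brackets in $\widetilde{\P'}$, so these properties propagate from the generators $\widehat{\d}$ to all of $\widehat\dd$.

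For the second claim, I first note that $\d\mapsto\gamma(\d)$ is $\kk$-linear: $\pi_2\circ\ad$ is linear, and the constraint $\gamma(\d)\in\O\,{\widehat\otimes}\,\L_{-1}=\O\,{\widehat\otimes}\,\P^{\geq -1}$ uniquely pins down $\gamma(\d)$ because $\P = \P^{-2}\oplus\P^{\geq -1}$ with $\P^{-2}=Z(\P)=\ker(\ad)$. Hence $\d\mapsto\widehat\d$ is $\kk$-linear. Using that $\pi_1$ is a Lie algebra homomorphism (noted in the construction of $\widehat\d$), I compute
\[
\ad[\widehat{\d_1},\widehat{\d_2}] = [\pi_1(\ad\d_1),\pi_1(\ad\d_2)] = \pi_1(\ad[\d_1,\d_2]) = \ad\widehat{[\d_1,\d_2]},
\]
whence $[\widehat{\d_1},\widehat{\d_2}] - \widehat{[\d_1,\d_2]}\in\ker(\ad)=Z(\P')$. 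A short induction on bracket depth, using that derivations preserve the characteristic ideal $Z(\P')$, then shows that every iterated bracket of the $\widehat{\d}$'s is of the form $\widehat{\delta}+z$ for some $\delta\in\dd$ and $z\in Z(\P')$; linearity of $\d\mapsto\widehat\d$ propagates this to all of $\widehat\dd$, giving $\widehat\dd = \{\widehat\d:\d\in\dd\}+(\widehat\dd\cap Z(\P'))$ and hence surjectivity of $\d\mapsto[\widehat\d]$. Injectivity is immediate, since $\widehat\d=\d-\gamma(\d)\in Z(\P')\subset\P'$ forces the $\dd$-component $\d$ to vanish.

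The main obstacle is controlling the iterated Lie brackets of the generators so that no essentially new elements appear outside $\{\widehat\d:\d\in\dd\}+Z(\P')$: this is exactly where the homomorphism property of $\pi_1$ is indispensable, since $\widehat\dd$ is defined as a Lie subalgebra rather than as a linear span. Once that bracket identity modulo $Z(\P')$ is established, the remainder is a direct application of the decomposition, the $\O$-bilinearity of the Lie bracket of $\P'\simeq\O\,{\widehat\otimes}\,\P$, and the standard fact that every derivation of $\P'$ preserves $Z(\P')$.
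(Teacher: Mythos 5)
The paper states this proposition without proof, so you are supplying an argument the author omits entirely; most of what you write is correct and is exactly what needs saying. In particular, your treatment of the second claim is complete: linearity of $\d \mapsto \widehat{\d}$ via the uniqueness of $\gamma(\d)$ in the complement $\O\,\widehat\otimes\,\P^{\geq -1}$ of $Z(\P')=\ker(\ad|_{\P'})$, the identity $[\widehat{\d_1},\widehat{\d_2}]\equiv\widehat{[\d_1,\d_2]}\bmod Z(\P')$ from the homomorphism property of $\pi_1$, the induction on bracket depth needed because $\widehat{\dd}$ is only a generated subalgebra, and injectivity from the semidirect-product structure of $\widetilde{\P'}$ are all sound. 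The same goes for the centralization of $\P^0$ and its propagation from the generators $\widehat{\d}$ to all of $\widehat{\dd}$.

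The one genuine gap concerns ``normalizes $\P$''. Since $\P$ has been identified with the subalgebra $\kk\otimes\P$ of $\P'\simeq\O\,\widehat\otimes\,\P$ via the splitting $\varsigma$ (and is used as such elsewhere, e.g.\ in $\N'=\widehat{\dd}+\P^{\geq 0}+\I+Z(\P')$), the natural reading of the claim is $[\widehat{\dd},\kk\otimes\P]\subset\kk\otimes\P$. Writing $\ad\widehat{\d}=d^{(\d)}\otimes\id_\P+f^{(\d)}\otimes\ad\E$ with $f^{(\d)}\in\O$, you get $[\widehat{\d},1\otimes y]=f^{(\d)}\otimes\ad\E(y)$, and for $y\in\P^k$ with $k\neq 0$ this lies in $\kk\otimes\P$ only if $f^{(\d)}$ is a constant; the decomposition of $\Der(\O\,\widehat\otimes\,\P)$ by itself only places $f^{(\d)}$ in $\O$, and nothing in your argument rules out a nonconstant coefficient. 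What you actually prove is the weaker statement that $\ad\widehat{\dd}$ preserves $\I$ (and indeed each $\m^j\,\widehat\otimes\,\P^k$) and therefore induces an action on $\P'/\I\simeq\P$, which is not the same as preserving the embedded copy of $\P$. You should either prove that $f^{(\d)}\in\kk$ --- for instance by exploiting that $\ad\d$ is simultaneously a continuous vector field on $X'$ and a derivation of the (twisted) Poisson bracket, which constrains its image in $\O\widehat\otimes\Der\H$ much more tightly than membership in $\Der(\O\,\widehat\otimes\,\P)$ does --- or state explicitly that you establish only the descended action on $\P'/\I$ together with stability of the bifiltration, which is in fact all that the subsequent lemmas of the paper use.
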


\begin{proposition}
The Lie algebra 
$\N' = \widehat{\dd} + \P^{\geq 0} + \I + Z(\P')$ 
normalizes $\P'_n$ for every $n>0$.
\end{proposition}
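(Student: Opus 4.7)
The plan is to verify that each of the four summands $\widehat{\dd}$, $\P^{\geq 0}$, $\I$, $Z(\P')$ of $\N'$ separately normalizes $\P'_n$. Three of these are immediate: $Z(\P')$ is central in $\P'$, so it normalizes every subspace; $\I$ normalizes $\P'_n$ by the same computation given earlier for non-H types in \seref{notH}, which transposes verbatim to the present setting; and by \seref{nonontrivial} the two filtrations $\{\P^{\geq k}\}$ and $\{\P_k\}$ of $\P$ coincide in the H-type case, so that $\P^{\geq 0} = \P_0 \subset \P'_0$ and the standard containment $[\P'_0, \P'_n] \subset \P'_n$ takes care of normalization by $\P^{\geq 0}$.

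The heart of the argument is to treat $\widehat{\dd}$. By construction, for each $\d \in \dd$ the adjoint action of $\widehat{\d}$ on $\P' \simeq \O \,{\widehat \otimes}\, \P$ is a derivation of the form $d \otimes \id_\P + \phi \otimes \ad \E$ for some $d \in \Der \O$ and $\phi \in \O$, where $\ad \E$ denotes the grading derivation of $\P$. I first check that $d$ must lie in $(\Der \O)_0$, i.e.\ that $d(\m) \subset \m$: indeed, $\I = \m \,{\widehat \otimes}\, \P$ is preserved both by $\d$ (by \prref{dactonxprime}\,{\em (iv)}) and by $\gamma(\d) \in \P'$ (since $\I$ is an ideal of $\P'$), hence also by $\widehat{\d} = \d - \gamma(\d)$. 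Writing out
\begin{equation*}
(d \otimes \id_\P + \phi \otimes \ad \E)(\m \,{\widehat \otimes}\, \P) \subset d(\m) \,{\widehat \otimes}\, \P + \m \,{\widehat \otimes}\, \P
\end{equation*}
and using stability of $\I$ under $\widehat{\d}$ then forces $d(\m) \subset \m$.

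With this in hand, $d(\m^i) \subset \m^i$ for every $i \geq 0$, and $\ad \E$ preserves each graded piece $\P^j$, hence each filtration piece $\P_j = \P^{\geq j}$. Using the identification $\P'_n = \sum_{i+j=n} \m^i \,{\widehat \otimes}\, \P_j$ (as in the earlier identification employed for non-H types), we obtain
\begin{equation*}
(d \otimes \id_\P + \phi \otimes \ad \E)\bigl(\m^i \,{\widehat \otimes}\, \P_j\bigr) \subset \m^i \,{\widehat \otimes}\, \P_j \subset \P'_n
\end{equation*}
whenever $i+j=n$, completing the verification $[\widehat{\d}, \P'_n] \subset \P'_n$. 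The key delicacy is precisely the coincidence of the two filtrations of $\P$ in H-type: without it the grading derivation $\ad \E$ would not visibly preserve the filtration pieces $\P_j$, and the last containment would fail. The presence of $Z(\P')$ in $\N'$ is harmless because it is central, and it is included in order to absorb the ambiguity in the choice of $\gamma(\d)$.
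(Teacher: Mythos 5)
Your proof is correct and follows essentially the same summand-by-summand strategy as the paper, with the welcome difference that you spell out explicitly why $\widehat{\dd}$ normalizes $\P'_n$ (via the decomposition $d \otimes \id_\P + \phi \otimes \ad \E$ with $d(\m)\subset\m$, together with the identification $\P'_n = \sum_{i+j=n}\m^i\,{\widehat\otimes}\,\P_j$), a point the paper only asserts as already known from the analogous non-H argument. The one item you omit is the paper's closing verification that $\N'$ is actually a Lie subalgebra of $\widetilde{\P'}$ --- implicit in the phrase ``The Lie algebra $\N'$'' --- which follows because $\I$ and $Z(\P')$ are ideals, $\P^{\geq 0}$ is a subalgebra, and $\widehat{\dd}$ is a subalgebra normalizing $\P^{\geq 0} + \I + Z(\P')$.
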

\begin{proof}
We know that $\P'_0, \I, \widehat{\dd}$ and $Z(\P')$ all normalize $\P'_n$, so that their sum does so too.
However, from $\P' = \P + \I$ follows $\P'_0 \subset \P_0 + \I$. As the filtration on $\P$ induced by the grading coincide with our standard filtration, see Section \ref{nonontrivial}, we also obtain $\P_0 = \P^{\geq 0}$. We conclude that $\N' = \widehat{\dd} + \P'_0 + \I + Z(\P') = \widehat{\dd} + \P^{\geq 0} + \I + \O \,{\widehat\otimes}\, Z(\P)$ normalizes $\P'_n$.
Notice that $\P_0 = \P^{\geq 0} \subset \P'$ is a subalgebra and that $\I, Z(\P')$ are ideals, so the sum $\P^{\geq 0} + \I + Z(\P')$ is a subalgebra of $\P'$. Also, $\widehat{\dd}$ is a subalgebra of $\widehat \P'$ and normalizes $\P^{\geq 0} + \I + Z(\P')$, so that $\N'$ is a subalgebra of $\widetilde{\P'}$.
\end{proof}

\begin{lemma}\label{nprimedecomposition}
The sum
$$\N' = (\widehat{\dd} + \P^0 + (\Span_\kk\langle 1, x^1, \dots, x^r\rangle \otimes Z(\P)) \oplus (\P^{>0} + \m \,{\widehat\otimes}\, \P^\perp + \m^2 \,{\widehat\otimes}\, Z(\P))$$
is a direct sum decomposition as vector subspaces. Moreover, the second summand on the right-hand side is an ideal of $\N'$. The quotient $\N'/(\P^{>0} + \m \,{\widehat\otimes}\, \P^\perp + \m^2 \,{\widehat\otimes}\, Z(\P))$ is thus isomorphic to
$$(\widehat{\dd} + \P^0 + \O \,{\widehat\otimes}\, Z(\P))/(\m^2 \,{\widehat\otimes}\, Z(\P)).$$
\end{lemma}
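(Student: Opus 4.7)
The plan is to verify three claims in sequence, writing
$A = \widehat{\dd} + \P^0 + \Span_\kk\langle 1, x^1, \ldots, x^r\rangle \otimes Z(\P)$
and
$B = \P^{>0} + \m \,{\widehat\otimes}\, \P^\perp + \m^2 \,{\widehat\otimes}\, Z(\P)$:
first that $\N' = A + B$ as vector subspaces of $\widetilde{\P'}$; second that $B$ is an ideal of $\N'$; and third that the sum is direct, i.e. $A \cap B = 0$. The quotient description at the end of the statement then follows by the second isomorphism theorem applied to $A$ sitting inside $\widehat{\dd} + \P^0 + \O\, {\widehat\otimes}\, Z(\P) = A + \m^2\,{\widehat\otimes}\, Z(\P)$.

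The identity $\N' = A + B$ is a bookkeeping reorganization: starting from $\N' = \widehat{\dd} + \P^{\geq 0} + \I + Z(\P')$, split $\P^{\geq 0} = \P^0 + \P^{>0}$, $\I = \m \,{\widehat\otimes}\, \P^\perp + \m \,{\widehat\otimes}\, Z(\P)$, and $Z(\P') = \O \otimes Z(\P) = \Span_\kk\langle 1, x^1, \ldots, x^r\rangle \otimes Z(\P) + \m^2 \otimes Z(\P)$, observe that $\m \,{\widehat\otimes}\, Z(\P) \subset Z(\P')$ is absorbed, and regroup. That $B$ is an ideal is checked against each summand of $A$: the central piece $\Span_\kk\langle 1, x^1, \ldots, x^r\rangle \otimes Z(\P) \subset Z(\P')$ contributes zero brackets; $\P^0 \simeq \spdd$ preserves the $\P$-grading (giving $[\P^0,\P^{>0}]\subset \P^{>0}$), stabilizes $\m \,{\widehat\otimes}\, \P^\perp$ by $\O$-bilinearity, and centralizes $Z(\P)$; and each $\widehat{\d}$ acts on $\P'$ by a derivation in $(\Der \O)_0 \otimes \id_\P + \O \otimes \ad \E$ --- the subscript because $\ad \widehat{\d}$ stabilizes the ideal $\I = \m \,{\widehat\otimes}\, \P$, forcing the $\Der \O$ piece to send $\m$ to $\m$ --- and such a derivation preserves $\P^{>0}$, $\m \,{\widehat\otimes}\, \P^\perp$, and $\m^2 \,{\widehat\otimes}\, Z(\P)$ since $(\Der \O)_0$ preserves both $\m$ and $\m^2$, multiplication by $\O$ preserves $\m$ and $\m^2$, and $\ad \E$ preserves the $\P$-grading; the central contribution $\widehat{\dd} \cap Z(\P')$ brackets trivially.

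The main obstacle lies in establishing directness. Given $\xi = \widehat{\delta} + b + c = d + e + f \in A \cap B$, projecting $\widetilde{\P'} \to \dd$ kills everything except the $\dd$-component of $\widehat{\delta}$, so $\widehat{\delta} \in \widehat{\dd} \cap \P' = \widehat{\dd} \cap Z(\P')$ by the preceding proposition. Decomposing $\P' = (\O \,{\widehat\otimes}\, \P^\perp) \oplus (\O \otimes Z(\P))$, then further along $\O = \kk \oplus \m$ and within the $\P$-grading ($\P^0 \cap \P^{>0} = 0$), forces $b = d = e = 0$, leaving $\widehat{\delta} + c = f$ in $Z(\P') = \Span_\kk\langle 1, x^1, \ldots, x^r\rangle \otimes Z(\P) \oplus \m^2 \otimes Z(\P)$. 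The critical input is then the containment $\widehat{\dd} \cap Z(\P') \subset \Span_\kk\langle 1, x^1, \ldots, x^r\rangle \otimes Z(\P)$, which I would establish by explicit analysis of the 2-cocycle $z_{ij} = [\widehat{\d_i}, \widehat{\d_j}] - \widehat{[\d_i, \d_j]} \in Z(\P')$ that generates $\widehat{\dd} \cap Z(\P')$: expanding $\widehat{\d} = \d - \gamma(\d)$ with $\gamma(\d) \in \O \,{\widehat\otimes}\, \L_{-1}$ and equating the $\P^\perp$- and $Z(\P)$-parts of $[\widehat{\d_i}, \widehat{\d_j}]$ expresses $z_{ij}$ in terms of $\pi_1(\ad \d_i)\gamma(\d_j)$, $\pi_1(\ad \d_j)\gamma(\d_i)$ and $[\gamma(\d_i), \gamma(\d_j)]$, and combining this with the filtration by powers of $\m$ places $z_{ij}$ --- and all further iterated brackets with elements $\widehat{\d}$, which act as $(\Der \O)_0\otimes\id_\P + \O\otimes\ad\E$ and hence send the putative complement $\m^2\otimes Z(\P)$ back into itself --- in the claimed subspace. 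Granting this, $\widehat{\delta} + c = f$ decomposes to give $f = 0$ and $\widehat{\delta} = -c$, hence $\xi = 0$, and the quotient statement follows immediately since $A \cap \m^2\,{\widehat\otimes}\, Z(\P) = 0$ and $A + \m^2\,{\widehat\otimes}\, Z(\P) = \widehat{\dd} + \P^0 + \O\,{\widehat\otimes}\, Z(\P)$.
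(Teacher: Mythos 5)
Your reorganization of $\N'$ into $A + B$ and your verification that $B$ is an ideal are correct and take the same route as the paper, which disposes of both points in two sentences by invoking homogeneity of $\P^{>0}$, $\P^\perp$, $Z(\P)$ for the grading. The one imprecision there is the claim that $\ad \widehat{\d}$ preserves $\P^{>0}$: the $\O \otimes \ad\E$ part of such a derivation sends $1 \otimes p$, $p \in \P^{>0}$, to $\phi \otimes (\deg p)\, p$, whose $\m$-part escapes $1 \otimes \P^{>0}$ and lands in $\m\,{\widehat\otimes}\,\P^\perp$; this is harmless because $B$ as a whole is still preserved, which is all you need.

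The genuine gap is in the directness. You correctly isolate the crux: after stripping off the $\dd$-, $\P^0$- and $\O\,{\widehat\otimes}\,\P^\perp$-components (using that $\widehat{\dd} \cap \P' = \widehat{\dd} \cap Z(\P')$), everything reduces to the containment $\widehat{\dd} \cap Z(\P') \subset \Span_\kk\langle 1, x^1, \dots, x^r\rangle \otimes Z(\P)$. But your treatment of this containment is a promissory note (``which I would establish by explicit analysis\dots''), not a proof, and the sketch does not close as described: the cocycle $z_{ij}$ contains the term $[\gamma(\d_i), \gamma(\d_j)]$, which lands in $\O\,{\widehat\otimes}\,\P^{-2} = \O \otimes Z(\P)$ with no a priori control on the $\m$-adic order of its $\O$-coefficients, so ``combining this with the filtration by powers of $\m$'' does not by itself exclude components in $\m^2 \otimes Z(\P)$; one would need genuine quantitative control on $\gamma(\d)$, which you never establish. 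For comparison, the paper's own proof silently omits the directness claim altogether, and the final quotient identification --- the only part used downstream --- does not require it: by the second isomorphism theorem it suffices to check $(\widehat{\dd} + \P^0 + \O\,{\widehat\otimes}\,Z(\P)) \cap B = \m^2\,{\widehat\otimes}\,Z(\P)$, which follows from $\widehat{\dd} \cap \P' \subset Z(\P') = \O\,{\widehat\otimes}\,Z(\P)$ regardless of which powers of $\m$ occur in the cocycle. You should either prove the delicate containment honestly or fall back on this weaker statement, which is all the lemma's quotient description needs.
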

\begin{proof}
Equality follows from $\P^{\geq 0} = \P^0 + \P^{>0}$ and
$\I = \m \,{\widehat\otimes}\, \P = \m \,{\widehat\otimes}\,(\P^\perp + Z(\P))$
by noticing that $Z(\P') = \O \,{\widehat\otimes}\, Z(\P)$ and $\O = \Span_\kk\langle 1, x^1, \dots, x^r\rangle + \m^2$.

The second summand on the right-hand side is a subalgebra of $\P'$ as $\P^\perp = \P^{\geq -1}$. It is an ideal of $\N'$ since $\P^{>0}, \P^\perp, Z(\P)$ are all homogeneous for the grading, and are thus $\widehat \dd + \P^0$-stable.
\end{proof}

\begin{remark}\label{overline}
Recall that $\widehat{\dd} = {\mathfrak D} + [{\mathfrak D}, {\mathfrak D}]$, where ${\mathfrak D} = \Span_\kk\langle \overline{\d}, \d \in \dd\rangle$. The Lie algebra $\widehat{\dd}$ is thus a finite-dimensional subalgebra of $\dd + \O \,{\widehat\otimes}\, Z(\P) \subset \dd \sd \P' \subset \dd' \sd \P' =: \widetilde{\P'}$. We know that $\widehat{\dd}$ centralizes $\P^0$, so that $\widehat{\dd} + \P^0 = \widehat{\dd} \oplus \P^0$ is a direct sum of Lie algebras. However, $\widehat{\dd}$ does not necessarily centralize $\P' = \O \,{\widehat\otimes}\, \P$, though it normalizes each $\m^j \,{\widehat\otimes}\, \P_k$.
\end{remark}

\begin{lemma}
Let $L = \Hd, L' = \Cur_H^{H'} L$, and $\widetilde{\P} = \dd \sd \P, \widetilde{\P'} = \dd' \sd \P'$ be the corresponding extended annihilation algebras. Then the quotient Lie algebra
$$\N'_k := (\widehat{\dd} + \P^0 + \O \,{\widehat\otimes}\, Z(\P))/(\m^k \,{\widehat\otimes}\,Z(\P))$$
is isomorphic to
\begin{itemize}
\item[---] $\dd \oplus \spdd$ when $k = 0$;
\item[---] $\dd_+ \oplus \spdd$ when $k = 1$,
\end{itemize}
Thus, $\N'_2$ is an abelian extension of both $\N'_0$ and $\N'_1$, 
and the intersection $[\N'_2, \N'_2]\cap (\O/\m^2) \,{\widehat\otimes}\, Z(\P)$ is only contained in $(\m/\m^2) \,{\widehat\otimes}\, Z(\P)$ when $\chi = 0$ and $\omega$ is exact.
\end{lemma}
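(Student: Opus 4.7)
The plan is to identify $\N'_k$ for $k=0,1$ via the extension structure inherited from the exact sequence $0\to \widehat\dd\cap Z(\P')\to \widehat\dd\to \dd\to 0$ and the fact that $\P^0\simeq \spdd$ is centralized by $\widehat\dd$; the statement that $\N'_2$ is an abelian extension of each is then immediate, and the final claim reduces to a computation inside $\dd_+$. For $k=0$, quotienting by all of $\O \,{\widehat\otimes}\, Z(\P)$ eliminates both the abelian ideal and the kernel of $\widehat\dd\to\dd$, giving $\N'_0=\dd\oplus\spdd$. For $k=1$, only the one-dimensional residue $L:=(\O/\m)\otimes Z(\P)\simeq \kk$ survives. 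I would verify that the $\dd$-action on $L$ equals $\chi$, using \prref{dactonxprime}\,{\em (iii)} together with the fact that $Z(\P)=\kk\cdot(e^{-\chi}\otimes_H e)$ and that the natural left action of $\dd$ on $X$ sends $e^{-\chi}$ to $-\chi(\d)\,e^{-\chi}$; and that the cocycle $\delta(\d,\d'):=[\widehat\d,\widehat\d']-\widehat{[\d,\d']}\in \O\,{\widehat\otimes}\, Z(\P)$ has $\kk$-part equal to $\omega(\d\wedge\d')$, a direct calculation against the structure constants $[e*e]=\sum_k (\overline{\d_k}\otimes\overline{\d^k})\otimes_H e$ of $\Hd$. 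This identifies $L\simeq \kk_\chi$ and the extension cocycle with $\omega$, yielding $\N'_1\simeq \dd_+\oplus \spdd$. The two abelian extension claims follow because $(\O/\m^2)\otimes Z(\P)$ and $(\m/\m^2)\otimes Z(\P)$ are abelian ideals of $\N'_2$ (they sit inside the center of $\P'$ modulo $\m^2\,{\widehat\otimes}\, Z(\P)$ and are centralized by $\P^0$) with respective quotients $\N'_0$ and $\N'_1$.

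For the final claim, the short exact sequence $0\to (\m/\m^2)\,{\widehat\otimes}\, Z(\P)\to \N'_2\to \N'_1\to 0$ induces a surjection
$$[\N'_2,\N'_2]\cap (\O/\m^2)\,{\widehat\otimes}\, Z(\P)\twoheadrightarrow [\N'_1,\N'_1]\cap (\kk\otimes Z(\P))$$
with kernel $[\N'_2,\N'_2]\cap (\m/\m^2)\,{\widehat\otimes}\, Z(\P)$. Hence the desired inclusion is equivalent to the vanishing of $[\N'_1,\N'_1]\cap (\kk\otimes Z(\P))$, which under $\N'_1=\dd_+\oplus\spdd$ becomes $[\dd_+,\dd_+]\cap \kk c$. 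Using $[\d_i,\d_j]_+=[\d_i,\d_j]_\dd+\omega(\d_i\wedge\d_j)c$ and $[\d_k,c]_+=\chi(\d_k)c$, a generic element of $[\dd_+,\dd_+]$ has $\dd$-part $\sum \lambda_{ij}[\d_i,\d_j]_\dd$ and $c$-coefficient $\sum \lambda_{ij}\omega(\d_i\wedge\d_j)+\sum \mu_k\chi(\d_k)$, and lies in $\kk c$ iff its $\dd$-part vanishes. When $\chi\neq 0$ the $\mu_k$'s alone sweep out all of $\kk c$, so the intersection equals $\kk c$; when $\chi=0$ the intersection equals $\omega(K)\,c$ with $K:=\ker\big([\cdot,\cdot]\colon \dd\wedge\dd\to \dd\big)$, and this vanishes precisely when $\omega$ factors through $[\dd,\dd]$, equivalently $\omega=-\di\alpha$ for some $\alpha\in \dd^*$, i.e., $\omega$ is exact. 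Combining the two cases gives the stated characterization.

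The main obstacle I anticipate is the identification of the $\kk$-part of $\delta$ with $\omega$ in the analysis of $\N'_1$, which requires unwinding the canonical choice $\gamma(\d)\in \O\,{\widehat\otimes}\, \P_{-1}$ and matching the resulting central term against the description of the central extension $\P\simeq P_N$ of $H_N$ supplied by the pseudoalgebra structure. Everything else --- the surjection argument, the abelian-extension packaging, and the final $[\dd_+,\dd_+]\cap \kk c$ calculation --- is either formal diagram chasing or elementary linear algebra.
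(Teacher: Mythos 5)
Your argument is correct and follows the same route as the paper's (very terse) proof: the $k=0$ case via the direct sum $\widehat\dd\oplus\P^0$ modulo the center, the $k=1$ case by identifying the residual central line with $\kk_\chi$ and the extension cocycle with $\omega$, and the final claim by reducing, through the surjection onto $[\N'_1,\N'_1]\cap(\kk\otimes Z(\P))$, to the elementary computation of $[\dd_+,\dd_+]\cap\kk c$. You in fact supply more detail than the paper, which settles $k=1$ by citing Proposition \ref{dactonxprime}\,\emph{(iii)} and calls the last claim ``immediate''; the one step you leave unchecked --- matching the $\kk$-part of the cocycle $[\widehat\d,\widehat{\d'}]-\widehat{[\d,\d']}$ against $\omega$ (where a sign in the action of $\d$ on $e^{-\chi}$ should be watched) --- is precisely the computation the paper also omits.
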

\begin{proof}
Case $k=0$: First of all, the sum $\widehat{\dd} + \P^0$ is a direct sum of Lie algebras, as $\widehat{\dd}$ centralizes $\P^0$. Its intersection with $\O \,{\widehat\otimes}\,Z(\P)$ may only lie in $\widehat{\dd}$, as $Z(\P)$ lies in degree $-2$, and is therefore trivial. The statement then follows from Remark \ref{overline}.
Case $k=1$ follows from Proposition \ref{dactonxprime}\,{\em (iii)}. The last claim is then immediate.
\end{proof}

The following consequence of the Cartan-Jacobson theorem is well known.
\begin{lemma}\label{repabext}
Let 
\begin{equation}\label{abext}
0 \to \fa \stackrel{i}{\to} \fe \stackrel{\pi}{\to} \fg \to 0
\end{equation}
be an extension of the Lie algebra $\fg$ by the abelian ideal $\fa$. If $(V, \rho)$ is a finite-dimensional irreducible representation of $\fe$, then there exists a splitting (as vector spaces) $s: \fg \to \fe$ such that $\rho \circ s: \fg \to \gl(V)$ is a Lie algebra homomorphism.
\end{lemma}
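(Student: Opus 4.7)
The plan is to first apply a Cartan--Jacobson-type argument to reduce to the case in which the abelian ideal $\fa$ acts on $V$ by scalars, $\rho \circ i = \lambda(\cdot)\id_V$ for some $\lambda \in \fa^*$, and then to correct an arbitrary vector space section $s_0 \colon \fg \to \fe$ of $\pi$ by a linear map $\mu \colon \fg \to \fa$ chosen via a trace computation. This naturally splits the proof into two steps.

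For the first step, the family $\{\rho(i(a)) : a \in \fa\}$ consists of pairwise commuting endomorphisms of $V$, so over the algebraically closed field $\kk$ there exists a common eigenvector $v$ with eigenvalues $\lambda(a)$ for some linear functional $\lambda \colon \fa \to \kk$. Fixing $x \in \fe$, an induction on the $\rho(x)$-cyclic chain $W_k := \Span\{v, \rho(x)v, \dots, \rho(x)^{k-1}v\}$ --- using that $\fa$ is an ideal, so that $[\rho(x), \rho(i(a))] \in \rho(i(\fa))$ --- shows each $W_k$ is $\rho(i(\fa))$-stable, with $\rho(i(a))|_{W_k}$ upper triangular and $\lambda(a)$ on the diagonal. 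For $k$ large the space $W_k$ stabilizes and becomes $\rho(x)$-invariant, so the identity $\tr([\rho(x), \rho(i(a))]|_{W_k}) = 0$ forces $\lambda([x, a]) = 0$ for every $x \in \fe$, $a \in \fa$. This in turn implies that the honest eigenspace $V_\lambda = \{w \in V : \rho(i(a))w = \lambda(a)w,\ \forall a \in \fa\}$ is $\fe$-stable, hence equal to $V$ by irreducibility.

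For the second step, I pick any linear section $s_0$ of $\pi$ and let $\omega \colon \fg \wedge \fg \to \fa$ be the associated cocycle defined by $i(\omega(g, g')) := [s_0(g), s_0(g')] - s_0([g, g'])$. Applying $\rho$ and using the scalar action gives $[\rho(s_0(g)), \rho(s_0(g'))] - \rho(s_0([g, g'])) = \lambda(\omega(g, g'))\id_V$, and since commutators are traceless,
\begin{equation*}
\lambda(\omega(g, g')) = -\tr(\rho(s_0([g, g']))) / \dim V = \nu([g, g']),
\end{equation*}
where $\nu(g) := -\tr(\rho(s_0(g))) / \dim V \in \fg^*$. Thus $\lambda \circ \omega$ is automatically the Chevalley--Eilenberg coboundary of $\nu$. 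If $\lambda = 0$ then $\rho$ factors through $\fg \cong \fe/i(\fa)$ and any $s_0$ works; otherwise choose $\mu \colon \fg \to \fa$ with $\lambda \circ \mu = \nu$ and set $s := s_0 + i \circ \mu$. Since the scalar shifts $\lambda(\mu(g))\id_V$ are central in $\gl(V)$ and so do not contribute to brackets, a short verification gives $[\rho(s(g)), \rho(s(g'))] = \rho(s([g, g']))$ as required. The main obstacle is the Cartan--Jacobson step, though the inductive upper-triangular argument is classical and short; the trace identity then automatically realizes $\lambda \circ \omega$ as a coboundary, and the final verification is routine.
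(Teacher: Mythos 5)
Your proof is correct, and it rests on the same two pillars as the paper's own argument: the Cartan--Jacobson theorem (so that $i(\fa)$ acts on $V$ by a scalar character $\lambda$) and the tracelessness of commutators in $\gl(V)$. The execution differs in two respects. First, you prove the Cartan--Jacobson input from scratch via the classical invariance-lemma argument (common eigenvector, cyclic chain $W_k$, trace of $[\rho(x),\rho(i(a))]$), whereas the paper simply cites the theorem; this is harmless, and your use of algebraic closure and characteristic zero matches the paper's standing assumptions on $\kk$. Second, and more substantively, where the paper passes to the quotient $\overline{\fe}=\fe/(\fa\cap\ker\rho)$ and argues structurally --- a nonzero scalar operator cannot be a commutator, hence $[\overline{\fe},\overline{\fe}]$ meets the one-dimensional centre trivially, hence the central extension is not irreducible and admits an abstract Lie-algebra splitting --- you deploy the same trace observation quantitatively: you compute $\lambda(\omega(g,g'))=\nu([g,g'])$ with $\nu=-\tr(\rho\circ s_0)/\dim V$, so the obstruction is exhibited explicitly as a coboundary and the corrected section $s=s_0+i\circ\mu$ is written down directly. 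Your route buys a fully constructive splitting and sidesteps the one terse step in the paper (that $[\overline{\fe},\overline{\fe}]\cap\overline{\fa}=0$ forces the one-dimensional central extension to split, which needs the short extra remark that $\pi$ restricts to an isomorphism $[\overline{\fe},\overline{\fe}]\isoto[\fg,\fg]$); note also that your $s$ satisfies $[s(g),s(g')]-s([g,g'])\in i(\ker\lambda)\subset\ker\rho$, which is exactly the property the paper's construction produces. The only cosmetic caveat is that your symbol $\omega$ clashes with the symplectic form used throughout the paper and should be renamed if incorporated.
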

\begin{proof}

By the Cartan-Jacobson theorem, each element from $a \in i(\fa)$ act on $V$ via multiplication by a scalar $\xi(a)$, thus yielding a linear form $\xi: \fa \to \kk$ whose kernel $\ker \xi = \fa \cap \ker \rho$ has codimension at most $1$ in $\fa$. If we set $\overline{\fa} = \fa/\ker \xi, \overline{\fe} = \fe/\ker \xi,$ then $\overline{\fe}$ is an extension of $\fg$ by the abelian ideal $\overline{\fa}$, and $\rho$ factors through $\overline{\fe}$ with a nontrivial action of $\overline{\fa}$ on $V$.

If $\overline{\fa} = 0$, or equivalently $\xi = 0$, then choose any section $s: \fg \to \overline{\fa}$ as vector spaces. Then $[s(g), s(h)] = s([g,h]) \mod \fa$ for every $g, h \in \fg$, and as $\fa \subset \ker \rho$ we obtain that $\rho\circ s: \fg \to \gl(V)$ is a Lie algebra homomorphism. 

We may thus assume without loss of generality that $\xi \neq 0$ and $\dim\overline{\fa} = 1$. Nonzero elements in $\overline{\fa}$ act via multiplication by nontrivial scalars on $V$, so they cannot be commutators in $\overline{\fe}$;
this forces the adjoint action of $\overline{\fe}$ on $\overline{\fa}$ to be trivial, so that $\overline{\fe}$ is a central extension of $\fg$, and $[\overline{\fe}, \overline{\fe}]$ to intersect $\overline{\fa}$ trivially, so that the central extension is not irreducible, and must admit a Lie algebra splitting $\overline s: \fg \to \overline{\fe}$.
Any lifting $s: \fg \to \fe$ of $\overline{s}$ will then satisfy $[s(g), s(h)] = s([g,h]) \mod \ker \xi$, whence $\phi = \rho \circ s: \fg \to \gl(V)$ is a Lie algebra homomorphism.
\end{proof}

\begin{remark}
If $s: \fg \to \fe$ is a splitting of \eqref{abext}, then every $g \in \fe$ may be decomposed as
$$g = s(\pi(g)) + (g - s(\pi(g))),$$
where $g - s(\pi(g)) \in i(\fa)$. When $\phi = \rho\circ s: \fg \to \gl(V)$ is a Lie algebra homomorphism, then 
$$\rho(g) = \phi(\pi(g)) + \xi(i^{-1}(g - s(\pi(g))))\id_V.$$
In other words, each finite-dimensional irreducible representation $V$ of an abelian extension $\fe$ of $\fg$ may only differ from an opportune irreducible action of $\fg$ on $V$ by multiples of $\id_V$.
\end{remark}

\subsection{Finite dimensional irreducible representations of $\N'/\P_n$, $n\geq 0$}

Our setting is as in Section 7. $L = \Hd$, $L' = \Cur_H^{H'}$ and $\widetilde \P, \widetilde \P'$ are the corresponding extended annihilation algebras. $V'$ is an irreducible representation of $L'$, so that the $L'$-action on $V'$ is non-trivial and $\ker_n V' = \{v \in V'\,|\, \P'_n v = 0\}$ is a nonzero finite-dimensional vector subspace of $V'$ when for sufficiently large values of $n$. Then $\N'$ stabilizes $\ker_n V'$, as $\N'$ normalizes $\P'_n$. If $R$ is a nonzero $\N'$-submodule of $\ker_n V'$, then $\Ind_{\N'}^{\widetilde \P'} R$ projects to $V'$. Without loss of generality, we may assume $R$ to be $\N'$-irreducible. The following fact is then going to prove useful.

\begin{lemma}
Let $R$ be an irreducible finite-dimensional $\N'$-module with a trivial action of $\P'_n$. Then $\P^{>0} + \m \,{\widehat\otimes}\, \P^\perp + \m^2 \,{\widehat\otimes}\, Z(\P)$ acts trivially on $R$.
\end{lemma}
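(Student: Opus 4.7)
Set $J := \P^{>0} + \m \,{\widehat\otimes}\, \P^\perp + \m^2 \,{\widehat\otimes}\, Z(\P)$. The overall strategy mirrors that of Proposition \ref{lposplusi}: exhibit $J$ as a solvable ideal of $\N'/\P'_n$, deduce via Lie's theorem that $J$ acts on $R$ through a single character $\chi\colon J\to\kk$, and then use $\P^0$-invariance together with Proposition \ref{onlytrivial} to force $\chi\equiv 0$. The genuine new wrinkle with respect to the non-H case is the central direction $\m^2\,{\widehat\otimes}\, Z(\P)$, which is handled at the very end.

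First I would verify that $J$ is nilpotent modulo $\P'_n$. Lemma \ref{nprimedecomposition} already says $J$ is an ideal of $\N'$; since the bracket on $\P'\simeq \O\,{\widehat\otimes}\,\P$ is $\O$-bilinear and $[\P^{\geq i},\P^{\geq j}]\subset \P^{\geq i+j}$, iterated commutators inside $J$ either increase the grading of the $\P$-factor (for the $\P^{>0}$ contribution) or multiply the $\m$-factor (for the $\m\,{\widehat\otimes}\,\P^\perp$ and $\m^2\,{\widehat\otimes}\,Z(\P)$ contributions). Recalling that for $\Hd$ one has $\P^{\geq m}=\P_m$ and $\ell=2$, this shows $J^{(k)}\subset \P'_n$ once $k$ is large enough in terms of $n$.

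Next I would invoke Lie's theorem over the algebraically closed field $\kk$ to conclude that the nilpotent (in particular solvable) ideal $J/\P'_n$ of $\N'/\P'_n$ acts on the finite-dimensional irreducible $\N'$-module $R$ via a single character $\chi\colon J\to \kk$, with $\chi$ vanishing on $[\N',J]$ (the classical vanishing statement from the proof of Lie's theorem). In particular, $\chi$ vanishes on $[\P^0,J]$. Since $\P^0\simeq\spdd$ is semisimple, every nontrivial $\P^0$-isotypical summand $W\subset J$ satisfies $W=[\P^0,W]$, so $\chi$ is forced to factor through the trivial $\P^0$-isotypical part of $J$. By Proposition \ref{onlytrivial}, the trivial isotypical component of $\P^0$ acting on $\P'$ is $\O\,{\widehat\otimes}\, Z(\P)$; intersecting with $J$ yields exactly $\m^2\,{\widehat\otimes}\, Z(\P)$, because $\P^{>0}\subset \P^\perp$ and $\m\,{\widehat\otimes}\,\P^\perp$ both sit entirely inside nontrivial isotypical components. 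Thus $\chi$ already vanishes on $\P^{>0}+\m\,{\widehat\otimes}\,\P^\perp$.

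It remains to kill $\chi$ on $\m^2\,{\widehat\otimes}\, Z(\P)$. Since $\m\,{\widehat\otimes}\,\P^\perp$ acts as zero on $R$, so does every Lie commutator of its elements in $\End(R)$. The second assertion of Proposition \ref{onlytrivial} states exactly that $\m^2\,{\widehat\otimes}\, Z(\P)\subset [\m\,{\widehat\otimes}\,\P^\perp,\m\,{\widehat\otimes}\,\P^\perp]$ (and since $Z(\P)$ is one-dimensional the completed and uncompleted tensor products agree), so $\chi$ must vanish there as well. Hence $\chi\equiv 0$ and $J$ acts trivially on $R$. The main potential obstacle is Step~1: one has to be careful in bookkeeping the simultaneous filtration by $\m$-powers and by the $\P$-grading so as to land inside $\P'_n=\fil_{n+1}X'\otimes_H S$; the rest of the argument is a fairly mechanical adaptation of \cite[Lemma 3.4]{BDK1} using the central-extension refinement of Proposition \ref{onlytrivial}.
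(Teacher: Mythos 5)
Your proof is correct and follows essentially the same route as the paper's: nilpotency of the ideal modulo $\P'_n$, the character argument for a solvable ideal acting on an irreducible module (which is exactly the content of the cited \cite[Lemma 3.4]{BDK1}), vanishing of the character on the nontrivial $\P^0$-isotypical components, and the containment $\m^2\,{\widehat\otimes}\,Z(\P)\subset[\m\,{\widehat\otimes}\,\P^\perp,\m\,{\widehat\otimes}\,\P^\perp]$ from Proposition \ref{onlytrivial} to dispose of the central part. The only difference is one of exposition: you unpack in full the Lie-theoretic step that the paper delegates to \cite[Lemma 3.4]{BDK1}.
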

\begin{proof}
The descending central series of all three summands eventually sits inside $\P'_n$, so that their sum is an ideal, by Lemma \ref{nprimedecomposition}, projecting to the radical of $\N'/\P'_n$. However, the adjoint action of $\P^0$ on the first two summands decomposes as a sum of nontrivial irreducible representations, whereas the third summand lies in $[\m \,{\widehat\otimes}\, \P^\perp, \m \,{\widehat\otimes}\, \P^\perp]$. We may then use \cite[Lemma 3.4]{BDK1} to conclude.
\end{proof}

\begin{proposition}
Let $R$ be an irreducible finite-dimensional $\N'$-module with a trivial action of $\P'_n$. Then the action of $\N'$ on $R$ factors through
$$\N'/(\P^{>0} + \m \,{\widehat\otimes}\, \P^\perp + \m^2 \,{\widehat\otimes}\, Z(\P)) \simeq (\widehat{\dd} \oplus \P^0) + (\O/\m^2)  \,{\widehat\otimes}\, Z(\P) = \N'_2.$$
If $\rho: \N'_2 \to \gl(R)$ denotes the above action, one may find an irreducible action $\phi: \N'_1 \to \gl(R)$ such that
$\rho(x) - \phi(x + \m \,{\widehat\otimes}\, Z(\P'))$ is a scalar multiple of $\id_R$ for every $x \in \N'_2$.
\end{proposition}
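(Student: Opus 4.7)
The first assertion is essentially free from what has already been set up. The preceding lemma says that $\rho$ kills $\P^{>0} + \m \,{\widehat\otimes}\, \P^\perp + \m^2 \,{\widehat\otimes}\, Z(\P)$, and Lemma \ref{nprimedecomposition} identifies the quotient of $\N'$ by this ideal with $\N'_2$; so the action factors through $\N'_2$. Hence the real content lies in the second claim, and I would deduce it as a direct consequence of Lemma \ref{repabext}.

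The plan is to exhibit $\N'_2$ as an abelian extension of $\N'_1$ and transport $\rho$ across this extension. Concretely, Lemma \ref{nprimedecomposition} provides the short exact sequence
\begin{equation*}
0 \to (\m/\m^2) \,{\widehat\otimes}\, Z(\P) \stackrel{i}{\to} \N'_2 \stackrel{\pi}{\to} \N'_1 \to 0,
\end{equation*}
whose kernel is a central abelian ideal (everything in $Z(\P')$ is central in $\N'_2$). Apply Lemma \ref{repabext} to the finite-dimensional irreducible $\N'_2$-module $R$: this yields a $\kk$-linear splitting $s: \N'_1 \to \N'_2$ such that $\phi := \rho \circ s : \N'_1 \to \gl(R)$ is a Lie algebra homomorphism, together with a linear functional $\xi$ on $(\m/\m^2) \,{\widehat\otimes}\, Z(\P)$ through which the abelian kernel acts by scalars. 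The remark following Lemma \ref{repabext} then gives the decomposition
\begin{equation*}
\rho(x) = \phi(\pi(x)) + \xi\bigl(i^{-1}(x - s(\pi(x)))\bigr)\,\id_R,\qquad x \in \N'_2,
\end{equation*}
which is precisely the statement that $\rho(x) - \phi(x + \m \,{\widehat\otimes}\, Z(\P))$ is a scalar multiple of $\id_R$ (the $\P'$ in the displayed statement being a typo for $\P$).

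It remains to verify that $\phi$ is irreducible. This is the only step that requires argument beyond quoting Lemma \ref{repabext}, but it is straightforward: from the displayed formula, $\rho(\N'_2) \subseteq \phi(\N'_1) + \kk\,\id_R$, so every $\phi(\N'_1)$-invariant subspace of $R$ is automatically $\rho(\N'_2)$-invariant. Irreducibility of $\rho$ then forces irreducibility of $\phi$, completing the proof. The main conceptual obstacle, really, was setting up the identification of the quotient $\N'/(\P^{>0} + \m \,{\widehat\otimes}\, \P^\perp + \m^2 \,{\widehat\otimes}\, Z(\P))$ with the two-step abelian tower $\N'_2 \twoheadrightarrow \N'_1$, which Lemma \ref{nprimedecomposition} already handled; once this is done, the Cartan--Jacobson machinery of Lemma \ref{repabext} carries all the analytical weight.
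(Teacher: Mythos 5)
Your argument is correct and is precisely the assembly the paper intends (the proposition is in fact stated there without an explicit proof): the preceding lemma kills the ideal, Lemma \ref{nprimedecomposition} identifies the quotient with $\N'_2$, and Lemma \ref{repabext} together with its remark supplies the splitting, the scalar discrepancy, and --- via your observation that $\rho(\N'_2)\subseteq\phi(\N'_1)+\kk\,\id_R$ --- the irreducibility of $\phi$; you also correctly flag the typo $Z(\P')$ for $Z(\P)$. The only quibble is your parenthetical claim that the kernel $(\m/\m^2)\,{\widehat\otimes}\,Z(\P)$ is central in $\N'_2$: the action of $\widehat{\dd}$ on it, through $(\Der\O)_0\otimes\id_\P$ plus an $\O$-multiple of $\ad\E$, need not vanish modulo $\m^2\,{\widehat\otimes}\,Z(\P)$, but this is immaterial since Lemma \ref{repabext} only requires the kernel to be abelian.
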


We may summarize the last proposition as follows: if $V'$ is a finite irreducible representation of the Lie pseudoalgebra $L'$, then one may find a finite dimensional irreducible $\N'$-module $R$ such that $V'$ is a quotient of $\Ind_{\N'}^{\widetilde \P'} R$, and the action of $\N'$ on $R$ is uniquely described by that of $\overline{\dd} \oplus \P^0$, along with a suitable scalar action of the abelian ideal $(\m/\m^2) \,{\widehat\otimes}\, Z(\P)$, which however does not affect irreducibility of $R$.

As $\widetilde \P' = \dd' + \N'$, then $\Ind_{\N'}^{\widetilde \P'} R$ is 
isomorphic to $\ue(\dd') \otimes R$, where the $\dd'$-action is obtained by left multiplication on the first tensor factor.

\begin{theorem}
Let $V'$ be a finite irreducible representation of the current Lie pseudoalgebra $L' = \Cur_H^{H'}\Hd$. Then there exists a finite-dimensional irreducible $\dd_+ \oplus \spdd$-module $R = \Pi \boxtimes U$ such that $V'$ is a quotient of the left $H'$-module $H' \otimes R$, endowed with the pseudoaction
\begin{align*}
e*(1 \otimes v) & = \sum_{ij} (\overline{\d_i} \overline{\d_j} \otimes 1) \otimes_{H'} (1 \otimes f^{ij}.v)\\
& - \sum_k (\overline{\d_k} \otimes 1) \otimes_{H'} (1 \otimes (\d^k + \ad^\sp(\d^k)).v - \d^k \otimes v)\\
& + (1 \otimes 1) \otimes_{H'} (1 \otimes c.v)\\
& + (t \otimes 1) \otimes_{H'} (1 \otimes v).
\end{align*}
where $v \in R$ and either $t = 0$ or $t \in \dd' \setminus \dd$.
\end{theorem}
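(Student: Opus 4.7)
The plan is to mimic the strategy of \thref{currenttensor}, carrying along the extra scalar information provided by the central elements in the annihilation algebra. Since $V'$ is a finite irreducible $L'$-module, it is a continuous discrete representation of $\widetilde{\P'}$, and for $n$ sufficiently large the subspace $\ker_n V'$ is nonzero and finite-dimensional. Since $\N' = \widehat{\dd} + \P^{\geq 0} + \I + Z(\P')$ normalizes $\P'_n$, it stabilizes $\ker_n V'$; choose a minimal, hence irreducible, nonzero $\N'$-submodule $R \subset \ker_n V'$. By irreducibility of $V'$, the canonical map $\Ind_{\N'}^{\widetilde{\P'}} R \surjto V'$ is surjective, and because $\widetilde{\P'} = \dd' + \N'$ with $\dd' \cap \N' = \dd$, the PBW theorem identifies this induced module with $H' \otimes R$ as a left $H'$-module.

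By the previous Proposition, the $\N'$-action on $R$ factors through $\N'_2 = (\widehat \dd + \P^0) + (\O/\m^2)\,\widehat \otimes\, Z(\P)$, and there is an irreducible action $\phi$ of $\N'_1 \simeq \dd_+ \oplus \spdd$ on $R$ with the property that the action of any $x \in \N'_2$ differs from $\phi$ of its image in $\N'_1$ by a scalar determined by a linear form $\xi \colon (\m/\m^2)\,\widehat \otimes\, Z(\P) \to \kk$. Thus $R \simeq \Pi_+ \boxtimes U$ as a $\dd_+ \oplus \spdd$-module, and the only additional datum carried by the $\N'$-structure on $R$ is the $r$-tuple of scalars $\xi(x^i \otimes (e^{-\chi}\otimes_H e))$, $i=1,\dots,r$.

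Now translate this back to a pseudoaction of $L'$ on $H' \otimes R$ via \thref{repdescription}. Writing an arbitrary $\d \in \dd$ as $\widehat \d + \gamma(\d)$ with $\gamma(\d) \in \O \,\widehat \otimes\, \P_{-1}$ reproduces exactly the formula \eqref{hdtensoraction} for the current tensor module $\Cur_H^{H'}\VH(\Pi_+, U)$, because the $\widehat \dd$-action corresponds (modulo the central scalar) to the natural $\dd$-action, and the $\P^0 \oplus \P^{\geq 0}/\P^{>0}$-piece reproduces the $\spdd$- and $\dd_+$-actions defining $\VH(R)$. The linear form $\xi$ picks up an extra term in the pseudoaction of the form $(t \otimes 1) \otimes_{H'} (1 \otimes v)$, where $t \in \dd'$ is the element corresponding via duality between $\fil^1 H'/\fil^0 H' \simeq \dd'$ and $\m/\m^2 \simeq \langle x^1, \dots, x^r\rangle^*$ to the functional $\xi$ rescaled by the central character on $Z(\P)$; by construction $t$ lies in the span of $\d_1, \dots, \d_r$, so either $t = 0$ or $t \in \dd' \setminus \dd$.

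The final step is to observe that the resulting expression is of the form \eqref{modifiedaction}; since it is a genuine Lie pseudoalgebra action (inherited from the representation structure of $V'$), \prref{twisted} forces $t$ to satisfy $[t,s] = 0$ and $\ad_\chi t$ to preserve $\dd$ and lie in $\spdd$. If $t \in \dd$, then $\thref{nontrivialiso}$ (applied with $t' = t, t = 0$) shows that the resulting $L'$-module is isomorphic to a current tensor module $\Cur_H^{H'}\VH(\Pi_+', U)$ for an appropriately twisted $\Pi_+'$, so we may normalize $t$ to be either zero or an element of $\dd' \setminus \dd$. The main technical point, which I would expect to occupy the bulk of the verification, is the precise bookkeeping in the third paragraph: showing that the term introduced by $\xi$ is exactly the one $(t \otimes 1)\otimes_{H'}(1 \otimes v)$, which requires carefully tracking how the central element $e^{-\chi}\otimes_H e$ is paired against $\overline{\d_i \d_j}$ and $\overline{\d_k}$ via the duality between $H'$ and $X'$.
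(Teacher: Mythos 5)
Your proposal follows the same route as the paper's proof: induce from a minimal irreducible $\N'$-submodule $R\subset\ker_n V'$, factor the $\N'$-action through $\N'_2$, use Lemma \ref{repabext} to re-express it through $\N'_1\simeq\dd_+\oplus\spdd$ up to scalars, and translate back into a pseudoaction on $H'\otimes R$ via Theorem \ref{repdescription}, identifying the deviation from the current tensor module formula as a single term $(t\otimes1)\otimes_{H'}(1\otimes v)$. Two points of your bookkeeping differ from the paper's, one of them substantive. First, $\dd'\cap\N'$ is not $\dd$: the elements $\widehat\d=\d-\gamma(\d)$ generally have a nonzero component in $\P^{-1}$, so $\dd\not\subset\N'$; what is actually used is the decomposition $\widetilde{\P'}=\dd'+\N'$, which gives $\Ind_{\N'}^{\widetilde{\P'}}R\simeq\ue(\dd')\otimes R$ by PBW — your conclusion is right, your intermediate claim is not. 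Second, and more importantly, the assertion that ``by construction $t$ lies in the span of $\d_1,\dots,\d_r$'' is not justified: the deviation from the current tensor module pseudoaction has \emph{two} sources, namely the scalar action of $(\m/\m^2)\,\widehat\otimes\,Z(\P)$ (which indeed contributes only in the directions transverse to $\dd$, as you say) \emph{and} the scalar discrepancies $\rho(x)-\phi(\pi(x))$ for $x$ in the canonical copy of $\widehat\dd$, coming from the choice of splitting in Lemma \ref{repabext}; the latter contribute a component of $t$ lying in $\dd$. The paper's proof isolates exactly these two contributions and concludes by observing that they cannot cancel each other (one lies in $\dd\otimes1$, the other does not), so that $t\ne0$ forces $t\notin\dd$. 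Your final paragraph --- normalizing a possible $t\in\dd$ away via Proposition \ref{twisted} and Theorem \ref{nontrivialiso} --- happens to repair this omission: a nonzero transverse component already places $t$ in $\dd'\setminus\dd$, while a purely $\dd$-valued $t$ gets absorbed into a twist of $\Pi_+$. So the argument does close, but for a reason different from the one stated in your third paragraph, and you should either correct the claim about the span of $\d_1,\dots,\d_r$ or make the normalization step carry the full weight of the dichotomy.
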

\begin{proof}
Let $R \subset \ker_1 V'$ be a (finite-dimensional) irreducible $\N'$-summand. Then the $\N'$-action on $R$ factors through the quotient $\N'_2$, and one may use Lemma \ref{repabext} to find a section of $\N'_1$ in $\N'_2$ in order to express the above $\N'_2$-action on $R$ by means of an irreducible action of $\N'_1 \simeq \dd_+ \oplus \spdd$, at least up to adding multiples of $\id_R$. One may then proceed similarly to \cite[Proposition 6.3]{BDK3}, while expressing the action of $\N'_2$ on $v \in R$ as the sum of the $\N'_1$-action and scalar multiples of $v$. The extra terms that arise, with respect to the ordinary tensor module pseudoaction, are of two kinds: first, there are terms of the form
\begin{equation}
\sum_{i = 1}^r (S(\d_i) \otimes 1) \otimes_{H'} (1 \otimes (x^i\otimes c).v),
\end{equation}
where $c = e^{-\chi} \otimes_H e \in \P \in Z(\P)$ is the linear generator of the center of $\P$, that occur because of the possibly nontrivial (scalar) action of $(\m/\m^2) \otimes Z(\P)$ on $R$. Secondly, there are extra scalar terms which originate from expressing the $\N'_2$-action via an irreducible $\N'_1$-action. Adding things up yields a total contribution of the form
$$(t \otimes 1) \otimes_{H'} (1 \otimes v),$$
where $t$ is an element of $\dd'$ whose projection to the vector space quotient $\dd'/\dd$ provides a complete description of the central action of $(\m/\m^2) \otimes Z(\P)$ by means of a linear functional on $\m/\m^2 \simeq (\dd'/\dd)^*$.

Notice that when $\m \,{\widehat \otimes}\, Z(\P)$ acts trivially, then both extra terms vanish and $t = 0$. If instead the $\m \,{\widehat \otimes}\, Z(\P)$ is nontrivial, then the second summation lies in $(\dd \otimes 1) \otimes_{H'} (1 \otimes v)$, and cannot cancel with the first contribution.
\end{proof}

We denote the above $L'$-module by $\VHtR$. 

\begin{remark}
If $t = 0$, then $\V_{\chi, \omega, 0, \dd'}^{\textrm H}(R) \simeq \Cur_H^{H'} \VH(R)$, where $\VH(R)$ is as in Section \ref{hdtm}. When $t \neq 0$, then $\ad_\chi$ preserves $\dd$ and lies in $\spdd$. In particular $\t := \kk t + \dd$ is a Lie subalgebra of $\dd'$ strictly containing $\dd$, and $\VHtR = \Cur_{\ue(\t)}^{H'} \V_{\chi, \omega, t, \t}^{\textrm H}(R)$.
\end{remark}

\section{Singular vectors and irreducibility}

Let $L$ be one of the finite primitive simple Lie pseudoalgebras listed in Section \ref{sprimitive}, and consider a finite representation $V'$ of the current Lie pseudoalgebra $L' = \Cur_H^{H'} L$.  We know that $\ker_n V' \subset V'$ is nonzero for sufficiently large values of $n$ and is stabilized by the action of $\N'= \widehat \dd + \L^{\geq 0} + \I \subset \widetilde \L'$. Moreover, every finite-dimensional irreducible $\N'$-submodule of $\ker_n V'$ lies in $\ker_1 V'$ and has a trivial action of the ideal $\Z = \L^{>0} + \m \otimes \L^{\perp} + \m^k \otimes \L$, where $k = 2$ when $L = \Hd$ and $k = 1$ otherwise. If $L \neq \Hd$, then $\N'/\Z\simeq \widehat \dd \oplus \L^0$ is isomorphic to
\begin{itemize}
\item[---] $\dd \oplus \gld$ when $L = \Wd$;
\item[---] $\dd \oplus \sld$ when $L = \Sd$;
\item[---] $\dd \oplus \sp(\ker \theta, \di \theta)$ when $L = \Kd$.
\end{itemize}
When $L = \Hd$, instead, $\N'/\Z$ is an abelian extension of $\dd_+ \oplus \spdd$.

\begin{definition}
Let $V'$ be a representation of a simple current Lie pseudoalgebra $L'$. 
An element $v \in V'$ is a {\em singular vector} if $\Z.v = 0$. The set of all singular vectors $\sing V':= \{v \in V'\,|\, \Z.v = 0\} \subset \ker_1 V'$ is a subspace of $V'$.
\end{definition}
We have already showed that $\sing V'$ is a finite-dimensional $\N'$-submodule of $V'$ as soon as $V'$ is a {\em finite} representation of $L$ with a nontrivial action, e.g., when $V'$ is a finite irreducible $L'$-module.

\begin{proposition}\label{singaction}
Let $L$ be a primitive Lie pseudoalgebra, and $S\subset L, \ell\in \NN,$ as in Section \ref{sprimitive}. If $V'$ is a finite Lie pseudoalgebra representation of $L' = \Cur_H^{H'} L$, then
$$(1 \otimes_H S)*(\sing V') \subset (\fil^\ell H \otimes \kk) \otimes_{H'} (\fil^1 H)\cdot (\sing V') + \sum_{k = 1}^r (\d_i \otimes 1) \otimes_{H'} (\sing V'),$$
and the second summand is possibly nonzero only when $L = \Hd$.
\end{proposition}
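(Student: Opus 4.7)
The plan is to expand the pseudoaction via the Fourier formula of \thref{repdescription},
\[
(1 \otimes_H s) * v = \sum_K (S(\d'^{(K)}) \otimes 1) \otimes_{H'} (x'_K \otimes_H s) \cdot v,
\]
where $\{x'_K\}$ is the basis of $X'$ dual to the PBW basis $\{\d'^{(K)}\}$ of $H'$. Split each multi-index as $K = K_1 + K_2$, with $K_1$ supported on indices $1, \dots, r$ (a chosen complement of $\dd$ in $\dd'$) and $K_2$ on indices $r+1, \dots, N+r$ (a basis of $\dd$), so that $\d'^{(K)} = \d'^{(K_1)} \d'^{(K_2)}$. Under the identification $\L' \simeq \O\,\widehat\otimes\,\L$ of Section \ref{scurrent}, one has $x'_K \otimes_H s = x'_{K_1} \cdot (x'_{K_2} \otimes_H s)$, with $x'_{K_1} \in \O$ and $x'_{K_2} \otimes_H s \in \L$.

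Singularity of $v$ means that the ideal $\Z = \L^{>0} + \m\,\widehat\otimes\,\L^\perp + \m^k\,\widehat\otimes\,\L$ acts trivially on $v$, where $k = 2$ if $L = \Hd$ and $k = 1$ otherwise; this selects the surviving Fourier modes. When $K_1 = 0$, the mode lies in $\L$ and contributes only outside $\L^{>0}$; since $\L_1 \subset \L^{>0}$ by the comparison of filtration and grading in Section \ref{nonontrivial}, this forces $|K_2| \leq \ell$, whence $S(\d^{(K_2)}) \in \fil^\ell H$. When $K_1 \neq 0$, the mode sits in $\m\,\widehat\otimes\,\L$ and survives only if $|K_1| < k$ and $x'_{K_2} \otimes_H s$ lies in the trivial $\L^0$-isotypical component of $\L$ identified in \leref{trivialL}. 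For $L \neq \Hd$ this is impossible since $k = 1$, which accounts for the qualifier on the second summand; for $L = \Hd$, one is forced to $K_1 = e_i$ with $1 \leq i \leq r$ and $x'_{K_2} \otimes_H s \in Z(\P)$.

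For the $K_1 = 0$ contributions, the right tensor factor $(x'_{K_2} \otimes_H s)\cdot v$ lies in $(\fil^1 H)\cdot \sing V'$: the $\L^0$-part preserves $\sing V'$ because $\L^0 \subset \N'$ normalizes $\Z$, while the $\L^{-1}$-part (and the $\L^{-2}$-part in types $K$ and $H$) can be matched, by structural parallel with the explicit tensor-module pseudoactions of Sections \ref{wdtm}--\ref{hdtm}, to elements of $\dd \cdot \sing V'$. For the $K_1 = e_i$ contributions in type $H$, the factorization $S(\d'^{(K)}) = -S(\d'^{(K_2)})\,\d_i$ and the centrality of $\O\,\widehat\otimes\,Z(\P)$ in $\L'$ (which ensures $(x^i z)\cdot v \in \sing V'$ for central $z$) allow the $K_2$-sum, via the $H$/$X$ duality and the one-dimensionality of $Z(\P)$, to be collected into a linear combination of elements of the form $(\d_i \otimes 1) \otimes_{H'} w_i$ with $w_i \in \sing V'$, modulo corrections that can be absorbed into the first summand.

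The main obstacle is this reorganization in the type-$H$ case, where the naive Fourier expansion yields terms with left tensor factor $S(\d'^{(K_2)})\,\d_i$, an element of $H\cdot \d_i \subset H'$ rather than of $\kk\,\d_i$. Pushing them into the pure $(\d_i \otimes 1)$-left-factor shape prescribed by the second summand, while redirecting the residual $H$-cofactor into the slack $(\fil^\ell H \otimes \kk) \otimes_{H'} (\fil^1 H)\cdot\sing V'$ of the first summand, requires a careful interplay between \prref{straightprop}, the Hopf-algebra coproduct on $H'$, and the scalar central action of $Z(\P)$ on singular vectors.
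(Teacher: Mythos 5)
Your strategy is exactly the paper's: the published proof consists of the single sentence ``Follows by using Theorem \ref{repdescription} with a PBW basis of $H'$ corresponding to a basis of $\dd'$ extending one of $\dd$,'' and your Fourier expansion with the splitting $K=K_1+K_2$ and the sieve by the ideal $\Z$ is the right way to carry that out; the first two paragraphs are sound. But the ``main obstacle'' you isolate at the end is not there, and the fact that you see one signals that your analysis of the $K_1\neq 0$ modes in type H is incomplete. For $|K_1|=1$ and $K_2\neq 0$ the mode $x^i x_{K_2}\otimes_H e=x^i\cdot(x_{K_2}\otimes_H e)$ already lies in $\m\,\widehat{\otimes}\,\P^\perp\subset\Z$, because $x_{K_2}\otimes_H e\in\fil_0 X\otimes_H e=\P_{-1}=\P^{\geq -1}=\P^\perp$: the center sits in degree $-2$, is complementary to $\P^\perp$, and is spanned by $e^{-\chi}\otimes_H e$, whose constant term is $1$, so only $x_{K_2}=1$ carries a nonzero central component. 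Hence every term with $K_1=\epsilon_i$, $K_2\neq 0$ kills a singular vector outright; the surviving exceptional terms are exactly $K=\epsilon_i$, with left factor $S(\d_i)=-\d_i\in\kk\,\d_i$ and right factor $(x^i z).v\in\sing V'$ (since $x^i z\in Z(\P')$ commutes with $\Z$). No reorganization via the coproduct, and no absorption of residual $H$-cofactors into the first summand, is needed.

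A second, milder gap: for the $K_1=0$ modes you place the right tensor factors in $(\fil^1 H)\cdot\sing V'$ ``by structural parallel with the explicit tensor-module pseudoactions,'' but $V'$ is an arbitrary finite module, not a tensor module, so this is not an argument. The mechanism the paper has set up for this purpose is the subalgebra $\widehat\dd\subset\N'$: since $\N'$ normalizes $\Z$ it preserves $\sing V'$, so writing $\d=\widehat\d+\gamma(\d)$ gives $\gamma(\d).v=\d.v-\widehat\d.v\in\dd\cdot\sing V'+\sing V'$; the elements $\gamma(\d)$ exhaust the negative-degree part of $\L$ modulo $\L^{\geq 0}$ (except for $Z(\P)$ in type H, which is central and again preserves $\sing V'$), while $\L^0\subset\N'$ preserves $\sing V'$ and $\L^{>0}\subset\Z$ kills $v$. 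With these two repairs your argument closes.
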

\begin{proof}
Follows by using Theorem \ref{repdescription} with a PBW basis of $H'$ corresponding to a basis $\{\d_1, \dots, \d_{N+r}\}$ of $\dd'$ such that $\d_{r+1}, \dots, \d_{N+r}$ is a basis of $\dd$.
\end{proof}

We have already seen that $\Ind_{\N'}^{\widetilde \L'} R \simeq H' \otimes R$ is a finite $L'$-module projecting to $V'$. It is (isomorphic to) a current representation $\Cur_H^{H'} \V(R)$ when the second summand vanishes, and to $\V_{t, \dd'}(R), t \in \dd' \setminus \dd$, otherwise.

\subsection{Singular vectors in current modules}

Whenever $V$ is an $L$-module, we set $\ker V: = \{ v \in V \,|\, L*v = 0\}$. It is clearly an $H$-submodule of $V$. Here we compute singular vectors in the current representation $\Cur_H^{H'}V$ of $\Cur_H^{H'}L$. It is convenient to treat the case $L = \Hd$ separately.
\begin{proposition}\label{wsk}
Let $L \neq \Hd$ be a primitive Lie pseudoalgebra as in Section \ref{sprimitive}. If $V$ is an $L$-module, then $\sing \Cur_H^{H'} V = 1 \otimes_H \sing V + H' \otimes_H \ker V$.
\end{proposition}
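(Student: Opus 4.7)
The proof will establish both inclusions using the PBW decomposition of $V' = H'\otimes_H V$ together with the Fourier-coefficient description of pseudoactions from Theorem \ref{repdescription}. Since $L \neq \Hd$, the ideal $\Z$ governing singular vectors simplifies to $\L^{>0} + \I$, as $\L^\perp \subset \L$ forces $\m \,{\widehat \otimes}\, \L^\perp + \m \,{\widehat \otimes}\, \L = \I$.

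For the inclusion $1 \otimes_H \sing V + H'\otimes_H \ker V \subset \sing V'$, I plan to verify three things separately. First, $\I$ annihilates $1 \otimes_H V$: the current pseudoaction reads $(1\otimes_H s) * (1\otimes_H v) = \sum_i (f_i \otimes g_i)\otimes_{H'}(1 \otimes_H v_i)$ with $f_i, g_i \in H$, so the Fourier-coefficient scalar $\langle S(y),f_i(g_i)_{(-1)}\rangle$ vanishes for $y \in I = H^\perp$, because its argument lies in $H$. Second, the action of $\L^{>0} \subset \L$ on $1 \otimes_H v$ coincides with the original $L$-action on $v$, killing it when $v \in \sing V$. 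Third, if $v \in \ker V$ then $L*v = 0$, and $H'$-bilinearity of the pseudoaction gives $L'*(h' \otimes_H v) = (1 \otimes h')\cdot(L'*(1 \otimes_H v)) = 0$, so $h' \otimes_H v \in \ker V' \subseteq \sing V'$.

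For the reverse inclusion, I write each $w \in V'$ in its unique form $w = \sum_K \d^{(K)} \otimes_H v_K$, where $K$ runs over PBW multi-indices supported on the first $r$ generators of $\dd'$ (those spanning a complement of $\dd$), so that $\{\d^{(K)}\}$ is a right $H$-basis of $H'$. Using $\Delta(\d^{(K)}) = \sum_{A+B=K}\d^{(A)}\otimes \d^{(B)}$ and the observation that any $x \in X \subset X'$ vanishes on $\d^{(A)} h$ for $A \neq 0$ and $h \in H$, the Fourier-coefficient formula collapses to
\[
(x \otimes_H s)\cdot w \;=\; \sum_K \d^{(K)} \otimes_H (x \otimes_H s) \cdot v_K,
\]
where on the right one reads the $L$-action on $V$. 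Uniqueness of this expansion then yields, from $\L^{>0} \cdot w = 0$, that $v_K \in \sing V$ for every $K$.

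The decisive step, and the main obstacle, is upgrading $v_K \in \sing V$ to $v_K \in \ker V$ for $K \neq 0$ via the $\I$-action. I plan to test the singular condition with dual PBW functionals $y = x^{(M_1 + M_2)} \in I$, where $M_1 \neq 0$ is supported on the first $r$ generators and $M_2$ on the last $N$. Careful Sweedler-style bookkeeping of the coproduct sum isolates exactly the term $A = M_1$ and reduces the pairing to $x^{(M_2)} \in X$ evaluated on the $H$-element $g_{i,(1)} S(f_i)$, producing a shift identity
\[
(x^{(M_1 + M_2)} \otimes_H s)\cdot w \;=\; \sum_{K'} \d^{(K')} \otimes_H (x^{(M_2)} \otimes_H s)\cdot v_{K'+M_1}.
\]
Invoking Corollary \ref{RSuniqueness} and letting $M_2 \in \NN^N$ vary (so that the $x^{(M_2)}$ sweep a topological basis of $X$) forces $\L \cdot v_J = 0$, i.e., $v_J \in \ker V$, for every $J \neq 0$. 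Combined with $v_0 \in \sing V$, this completes the proof; the only nontrivial ingredient is the Sweedler bookkeeping behind the shift identity, after which everything reduces to the PBW uniqueness statements already in hand.
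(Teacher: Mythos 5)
Your proof is correct and follows essentially the same route as the paper's: both expand $w=\sum_K \d^{(K)}\otimes_H v_K$ in the right $H$-basis of $H'$ and use uniqueness of that expansion to force $v_K\in\ker V$ for $K\neq 0$ and $v_0\in\sing V$. The only difference is packaging: the paper works at the level of the pseudoaction, invoking Proposition \ref{singaction} and Corollary \ref{RSuniqueness}, whereas you unpack the same computation through the Fourier coefficients of Theorem \ref{repdescription} (your deferred bookkeeping does close up, provided one writes the coefficient as $\langle x, g_{(1)}S(f)\rangle$ so that the monomial $\d^{(A)}$ sits to the left of the $H$-factor in the pairing), and you additionally spell out the easy inclusion that the paper leaves implicit.
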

\begin{proof}
Let $u = \sum_{K\in\NN^r \times 0}  \d^{(K)} \otimes_H u_K \in H' \otimes_H V$. If $a*u_K = \sum_{L \in 0 \times \NN^{N}} (\d^{(L)} \otimes 1) \otimes_H u_{K+L}$, then
$$(1 \otimes_H a) * u = \sum_{K, L} (\d^{(L)} \otimes \d^{(K)}) \otimes_{H'} u_{K+L}.$$
Using Proposition \ref{singaction} and Corollary \ref{RSuniqueness}, we argue that if $u$ is singular, then $u_{K+L} = 0$ whenever $K \neq 0$ or, equivalently, $u_K \in \ker V$; similarly, $u_0 \in \sing V$.
\end{proof}

We may now step on to the case $L = \Hd$.

\begin{lemma}
Let $V$ be a Lie pseudoalgebra representation of $L = \Hd = He$. Then $C(V):= \{v \in V\,|\, e*v \in (1 \otimes 1) \otimes_H V\}$ equals $\ker V$.
\end{lemma}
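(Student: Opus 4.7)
The inclusion $\ker V \subseteq C(V)$ is immediate, since if $e*v = 0$ then trivially $e*v = (1\otimes 1)\otimes_H 0 \in (1\otimes 1)\otimes_H V$. For the converse, assume $v \in C(V)$, so that $e*v = (1\otimes 1)\otimes_H w$ for a uniquely determined $w \in V$; the goal is to show $w = 0$, which forces $e*v = 0$ and hence $v \in \ker V$, because $L = He$ is $H$-generated by $e$.

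The cleanest route is to pass to the annihilation algebra $\P = X \otimes_H e$ via Theorem \ref{repdescription}, which translates the pseudoaction of $\Hd$ on $V$ into a continuous $\P$-action. Specializing the explicit formula there to $a = e$ and $e*v = (1 \otimes 1)\otimes_H w$, and using $\De(1) = 1\otimes 1$, one obtains $(x\otimes_H e).v = \langle x,1\rangle\, w$ for every $x \in X$. In particular, the subspace $W := \fil_0 X \otimes_H e$ annihilates $v$, while $(1_X \otimes_H e).v = w$, where $1_X = \ep$ denotes the identity of $X$. It is therefore enough to locate $1_X \otimes_H e$ inside $W + [W, W] \subset \P$, because then $W.v = 0$ automatically implies $[W,W].v = 0$, and hence $w = 0$.

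For this I would apply the annihilation bracket \eqref{annihbracket} to a pair $x^\ell \otimes_H e,\ x^m \otimes_H e \in W$:
$$[x^\ell \otimes_H e,\ x^m \otimes_H e] = \sum_k (x^\ell\, \overline{\d_k})(x^m\, \overline{\d^k}) \otimes_H e,$$
and extract its $1_X$-component by pairing the $X$-coefficient with $1 \in H$, which is multiplicative. Using $\langle x^\ell \d_k, 1\rangle = -\de^\ell_k$ and $\langle x^m \d^k, 1\rangle = -r^{km}$, and observing that the $\chi$-corrections in $\overline{\d_k} = \d_k - \chi(\d_k)$ drop out because $\langle x^\ell, 1\rangle = 0$, a short computation gives
$$[x^\ell \otimes_H e,\ x^m \otimes_H e] \equiv r^{\ell m}\, (1_X \otimes_H e) \pmod{W}.$$
Since the matrix $(r^{ij})$ is nondegenerate (it inverts the symplectic form $(\omega_{ij})$), some $r^{\ell m}$ is nonzero; evaluating the displayed congruence on $v$ yields $r^{\ell m} w = 0$, whence $w = 0$. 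The only point requiring a small amount of attention is the bookkeeping around the $\chi$-terms, but this is handled automatically by $x^\ell \in \fil_0 X$, so I do not expect any substantive obstacle.
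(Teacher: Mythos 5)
Your proof is correct, but it follows a genuinely different route from the paper's. The paper first observes that $C(V)\subset \sing V$, so that the coefficients of $e*u$ are controlled by the $\dd_+\oplus\spdd$-action on singular vectors; the hypothesis then forces $\spdd.u=0$ and $\d.u=\d u$ for $\d\in\dd$, and the commutator $[\d,\d']_+=[\d,\d']+\omega(\d\wedge\d')c$ in $\dd_+$ together with nondegeneracy of $\omega$ yields $c.u=0$, hence $e*u=0$. You instead work directly in the annihilation algebra: Theorem \ref{repdescription} turns $e*v=(1\otimes 1)\otimes_H w$ into $(x\otimes_H e).v=\langle x,1\rangle w$, so $\fil_0X\otimes_H e$ kills $v$ while $\ep\otimes_H e$ acts by $w$, and your bracket computation $[x^\ell\otimes_H e,\,x^m\otimes_H e]\equiv r^{\ell m}(\ep\otimes_H e) \pmod{\fil_0X\otimes_H e}$ (which I have checked, including the multiplicativity of pairing with $1$ and the vanishing of the $\chi$-corrections) then gives $r^{\ell m}w=0$, whence $w=0$ by nondegeneracy of $(r^{ij})$. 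Both arguments ultimately exploit the same phenomenon --- the symplectic form governs a central-extension term that must act trivially --- but yours sees it in the degree $-2$ cocycle of $\P\to\H$ rather than in the abelian extension $\dd_+\to\dd$. What your version buys is self-containedness: it uses only Theorem \ref{repdescription} and the bracket \eqref{annihbracket}, and in particular does not rely on the description of the $\N$-action on $\sing V$; the paper's version is shorter given that machinery and makes the role of the central element $c$ of $\dd_+$ explicit, which is what the surrounding classification actually needs.
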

\begin{proof}
Clearly, $C(V) \subset \sing V$, as $u\in V$ is singular precisely when $e*u \in (\fil^2 H \otimes \kk) \otimes_H V$. Then $u \in C(V)$ implies $\spdd.u = 0$ and $\d u - \d.u = 0$ for all $\d\in \dd$, where $\d.u$ denotes the action of $\d \in \dd_+$ on $u \in \sing V$. However $\d.u = \d u$ for every $\d \in \dd$ implies
$$([\d, \d'] + \omega(\d \wedge \d')c).u = [\d, \d']_+ .u = \d.(\d'.u) - \d'.(\d.u) = \d(\d'u) - \d'(\d u) = [\d, \d']u,$$
that is $\omega(\d\wedge\d')c.u = 0$ for all $\d, \d' \in \dd$. As $\omega$ is nondegenerate, this forces $c.u = 0$, whence $e*u = 0$.
\end{proof}

\begin{proposition}
Let $V$ be a Lie pseudoalgebra representation of $\Hd$. Then $\sing \Cur_H^{H'} V = 1 \otimes_H \sing V + H' \otimes_H \ker V$.
\end{proposition}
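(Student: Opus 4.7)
My plan is to prove the two inclusions separately, adapting the strategy of Proposition \ref{wsk} but with one crucial new ingredient needed for type H: the fact that the central element $c \in Z(\P)$ actually lies in $[\P^{-1},\P^{-1}]$.

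For the easy inclusion $\supseteq$, I check directly that $\Z = \P^{>0} + \m \otimes \P^\perp + \m^2 \otimes Z(\P)$ annihilates a generator of each type. For $v \in \sing V$, the $\P^{>0}$-part acts as $1 \otimes_H (\P^{>0}\cdot v) = 0$, while the summands $\m \otimes \P^\perp$ and $\m^2 \otimes Z(\P)$ annihilate $1 \otimes_H v$ because elements of $\m$ vanish on $H \subset H'$. For $v \in \ker V$, Proposition \ref{currcorrespond} gives $(1 \otimes_H e) * (1 \otimes_H v) = 0$, hence $h \otimes_H v \in \ker \Cur_H^{H'} V \subseteq \sing \Cur_H^{H'} V$ for every $h \in H'$ by $H'$-bilinearity.

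For the reverse inclusion, write $u = \sum_{K \in \NN^r \times 0} \d^{(K)} \otimes_H u_K$ and, using Theorem \ref{repdescription}, the expansion $\Delta(\d^{(K)}) = \sum_{K_1+K_2=K}\d^{(K_1)} \otimes \d^{(K_2)}$, and the factorisation $\langle \phi\otimes y,\d^{(K_1)}\d^{(J')}\rangle = \langle\phi,\d^{(K_1)}\rangle\langle y,\d^{(J')}\rangle$, establish
\[
((\phi\otimes y) \otimes_H e) \cdot u \;=\; \sum_{K_2}\d^{(K_2)}\otimes_H \sum_{K_1}\langle\phi,\d^{(K_1)}\rangle\, (y\otimes_H e)\cdot u_{K_1+K_2}
\]
for $\phi \in \O$ and $y \in X$. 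Setting $\phi = 1$ isolates $K_1 = 0$, so $\P^{>0}\cdot u = 0$ combined with the direct sum $V' = \bigoplus_K \d^{(K)} \otimes_H V$ yields $u_K \in \sing V$ for every $K$. Setting $\phi \in \m$ and varying it to localize on each $K_1 \neq 0$, with $y \otimes_H e$ ranging over $\P^\perp = \P^{\geq -1}$, gives $\alpha\cdot u_K = 0$ for every $\alpha \in \P^{\geq -1}$ and every $K \neq 0$.

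The final and crucial step passes from ``$u_K$ killed by $\P^{\geq -1}$'' to ``$u_K \in \ker V$''. Here the Heisenberg-type structure of $\P \simeq P_N$ is essential: since $\P$ is the irreducible central extension of $H_N$, the bracket on the abelian-modulo-center degree $-1$ component $\P^{-1}$ picks up a nondegenerate symplectic cocycle with values in $Z(\P) = \kk c$, so $c \in [\P^{-1},\P^{-1}] \subseteq [\P^{\geq -1},\P^{\geq -1}]$. Writing $c = \sum_i [\alpha_i,\beta_i]$ with $\alpha_i,\beta_i \in \P^{-1}$, we compute $c\cdot u_K = \sum_i(\alpha_i\beta_i - \beta_i\alpha_i)\cdot u_K = 0$ for $K \neq 0$, so $u_K$ is annihilated by all of $\P$, i.e.\ $u_K \in \ker V$. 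Combined with $u_0 \in \sing V$ this gives $u \in 1 \otimes_H \sing V + H' \otimes_H \ker V$. Without this perfectness argument the statement would fail---a vector $\d_i \otimes_H v$ with $v$ killed by $\P^{\geq -1}$ but $c\cdot v \neq 0$ would be singular in $\Cur_H^{H'} V$ yet not lie in the claimed subspace---so the identity $c \in [\P^{-1},\P^{-1}]$ is exactly what makes the proposition true in type H.
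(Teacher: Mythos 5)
Your argument is correct, and its skeleton coincides with the paper's: expand $u$ in the PBW basis transversal to $H$, use the singularity condition to conclude that $u_0 \in \sing V$ while every coefficient $u_K$ with $K \neq 0$ is annihilated by $\P^{\geq -1} = \P^{\perp}$, and then upgrade ``killed by $\P^{\geq -1}$'' to ``killed by all of $\P$''. The only genuine divergence is in how that upgrade --- the type-H-specific input --- is established. The paper isolates it as the lemma $C(V) = \ker V$, where $C(V) = \{v \mid e*v \in (1 \otimes 1)\otimes_H V\}$ is precisely the set of vectors annihilated by $\P_{-1} = \P^{\geq -1}$, and proves it inside the pseudoalgebra formalism: for such $v$ the $\dd_+$-action satisfies $\d.v = \d v$, so the relation $[\d,\d']_+ = [\d,\d'] + \omega(\d \wedge \d')c$ together with nondegeneracy of $\omega$ forces $c.v = 0$. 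You instead invoke the structural fact that in the irreducible central extension $\P \simeq P_N$ of $H_N$ one has $Z(\P) = [\P^{-1},\P^{-1}]$, the cocycle on $\P^{-1}$ being the nondegenerate form $\omega$; these are the same nondegeneracy statement in two guises, so nothing is gained or lost mathematically. Your phrasing is somewhat more structural and makes transparent why only type H needs the extra step (for the other primitive types the annihilation algebra is centerless and $\P^{\geq -1}$ already generates everything relevant), whereas the paper's version stays at the level of the $\dd_+$-action on singular vectors, which is the form in which it is reused later in the classification. Your closing observation --- that a vector killed by $\P^{\geq -1}$ but not by $c$ would yield a singular vector $\d_i \otimes_H v$ outside the claimed subspace --- is exactly the content of the paper's lemma and correctly locates where the type-H subtlety lives.
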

\begin{proof}
We shall prove $\sing \Cur_H^{H'} V = 1 \otimes_H \sing V + \dd' \otimes_H C(V) + H' \otimes_H \ker V$. Then the claim follows using the previous lemma.
First of all, Proposition \ref{singaction} implies
$$(1 \otimes_H e)*(\sing V') \subset ((\fil^2 H \otimes \kk) + (\kk \otimes \dd')) \otimes_{H'} V'.$$
Now proceed as in Proposition \ref{wsk} and compute $(1 \otimes_H e)*u$. If $\{\epsilon_i, 1 \leq i \leq r\}$ denotes the standard canonical basis of $\NN^r$, then Corollary \ref{RSuniqueness} yields $u_{\epsilon_i} \in C(V)$, and $a* u_K = 0$ if $K \neq 0, \epsilon_i$, whence $u_K \in \ker V$. Similarly, $u_0 \in \sing V$.
\end{proof}

\begin{corollary}
Let $V$ be a finite irreducible module over the primitive Lie pseudoalgebra $L$. Then $\sing \Cur_H^{H'} V = 1 \otimes_H \sing V$.
\end{corollary}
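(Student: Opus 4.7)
The plan is to combine the preceding proposition with the irreducibility hypothesis on $V$ in order to show that the summand $H' \otimes_H \ker V$ vanishes identically.

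More precisely, by the proposition immediately above (and its analogue for $L\neq \Hd$, namely \prref{wsk}), we have the general identity
$$\sing \Cur_H^{H'} V = 1 \otimes_H \sing V + H' \otimes_H \ker V,$$
which holds for any $L$-module $V$. The assertion will therefore follow as soon as we prove that $\ker V = 0$ whenever $V$ is a finite irreducible $L$-module.

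To this end, I would first observe that $\ker V = \{v \in V \st L * v = 0\}$ is not just an $H$-submodule but in fact an $L$-submodule of $V$: it is patently $H$-stable since the pseudoaction is $H \otimes H$-linear, and it is $L$-stable since by definition $L \cdot \ker V = (0) \subset \ker V$. Now apply the irreducibility hypothesis on $V$: its only $L$-submodules are $(0)$ and $V$, so either $\ker V = 0$ or $\ker V = V$. The latter would mean $L*V = 0$, i.e.\ that the $L$-action on $V$ is trivial, which is explicitly excluded by the convention on irreducible modules recalled on page \pageref{kern}. Consequently $\ker V = 0$, whence $H' \otimes_H \ker V = 0$ by exactness of the current functor (or just by freeness of $H'$ as a right $H$-module), and the identity reduces to $\sing \Cur_H^{H'} V = 1 \otimes_H \sing V$ as claimed.

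No step here is substantial: the only mild subtlety is the observation that $\ker V$ is automatically an $L$-submodule, which is forced by the fact that submodules are defined via the pseudoaction rather than via action of the annihilation algebra. Everything else is a direct invocation of the previous proposition and the definition of irreducibility.
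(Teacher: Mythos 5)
Your proposal is correct and follows exactly the route of the paper, whose entire proof reads ``Follows from $\ker V = 0$''; you have merely spelled out why $\ker V$ vanishes, namely that it is an $L$-submodule which cannot be all of $V$ because irreducible modules are required to have a nontrivial action. The added detail is accurate and harmless, but no new idea is involved.
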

\begin{proof}
Follows from $\ker V = 0$.
\end{proof}

\begin{remark}
Recall that the embedding $\iota: H \to H'$ is a pure ring homomorphism, so that 
$m \mapsto 1 \otimes_H m$ provides an injective $H$-linear homomorphism $M \to \Cur_H^{H'} M$ for every left $H$-module $M$. Thus each Lie $H$-pseudoalgebra embeds $H$-linearly in the corresponding current Lie pseudoalgebras, and similarly for representations.

In particular, the subspace $1 \otimes_H \sing V$ in the previous corollary is isomorphic to $\sing V$, both as a vector space and as a $\dd \oplus \L^0$-module (resp. $\dd_+ \oplus \L^0$-module when $L$ is of type H).
\end{remark}

\subsection{Irreducibility of $\VHtR$}

Let $\VHtR, t \in \dd' \setminus \dd,$ be a Lie pseudoalgebra representation of $L' = \Cur_H^{H'} \Hd$ as introduced in Section \ref{Hdexceptional}. 
We will now investigate irreducibility of $\VHtR$ by explicitly computing its singular vectors.
\begin{theorem}
Let $\dd \subset \dd'$ be Lie algebras, and $t\in \dd' \setminus \dd$ an element satisfying the conditions of Proposition \ref{twisted}. Then $\sing \VHtR = \kk \otimes R$. As a consequence, the $L'$-module $\VHtR, t \in \dd' \setminus \dd,$ is irreducible as soon as $R$ is an irreducible $\dd_+ \oplus \spdd$-module.
\end{theorem}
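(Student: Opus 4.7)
My plan is to prove the refined equality $\sing \VHtR = \kk \otimes R$, from which irreducibility of $\VHtR$ follows by a standard argument that combines finiteness of the module with the hypothesis that $R$ is irreducible as a $\dd_+ \oplus \spdd$-module. I would first dispatch the easy inclusion $\kk \otimes R \subseteq \sing \VHtR$ by direct inspection of the pseudoaction \eqref{modifiedaction}: for $v \in R$, each of the first three summands of $e *_t (1 \otimes v)$ has first-slot coefficient $\overline{\d_i \d_j}, \overline{\d_k},$ or $1$, all lying in $\fil^2 H$; after decomposing $t = t_\dd + \sum_{k=1}^r \lambda_k \d_k$ along the direct sum $\dd' = \dd \oplus \Span\{\d_1, \ldots, \d_r\}$, the new summand $(t \otimes 1) \otimes_{H'} (1 \otimes v)$ splits into a piece with first slot in $\dd \subseteq \fil^2 H$ plus contributions $\lambda_k (\d_k \otimes 1) \otimes_{H'} (1 \otimes v)$, all lying in the subspace allowed by \prref{singaction}.

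For the reverse inclusion I would let $u \in \sing \VHtR$ and write uniquely $u = \sum_{K \in \NN^r \times \{0\}} \d^{(K)} \otimes_H m_K$ with $m_K \in \VH(R)$, using the right-$H$-freeness of $H'$ on the PBW basis elements $\d^{(K)}$ supported on $\dd' \setminus \dd$. By $H'$-linearity of the pseudoaction in the second slot,
$$e *_t u \,=\, e *_0 u \,+\, \sum_{K} (t \otimes \d^{(K)}) \otimes_{H'} (1 \otimes_H m_K),$$
where $*_0$ denotes the untwisted current pseudoaction on $\Cur_H^{H'} \VH(R)$. Mimicking the strategy of the Proposition proved just above on $\sing \Cur_H^{H'} V$, I would left-straighten both summands and use \coref{RSuniqueness} in conjunction with \prref{singaction} to eliminate contributions whose first slot lies outside $\fil^2 H + \sum_{k=1}^r \kk \d_k$. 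At the top first-slot degree $|K|+2$, only the $\overline{\d_i \d_j}$-part of $\alpha_K$ from $e *_0 u$ survives, since the $t$-piece contributes first slots of degree only $|K|+1$; its coefficient forces $\rho(f^{ij}) v_K = 0$, with $v_K$ the $H$-degree-zero component of $m_K$. Since $\{f^{ij}\}$ generate $\spdd$ and $U = R|_{\spdd}$ is irreducible, this gives $v_K = 0$, and when $U$ is the trivial $\spdd$-module a parallel argument using the coefficient of $t \widetilde\d^{(K)}$ (still outside the allowed first-slot subspace, as $t\d_k$ has degree $2$ and does not lie in $\fil^2 H + \sum_j \kk\d_j$) yields the same conclusion. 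Iterating this analysis downwards in $|K|$ and through the PBW structure of each $m_K \in H \otimes R$ gives $m_K = 0$ for every $K$ with $|K| \geq 1$; hence $u = 1 \otimes_H m_0$, and the residual singularity condition reduces to $m_0 \in \sing \VH(R) = 1 \otimes R$, using the known singular-vector computation for $\Hd$-tensor modules from \cite{BDK3}.

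To conclude irreducibility, I would take a nonzero $L'$-submodule $M \subseteq \VHtR$; as an $H'$-submodule and discrete $\widetilde{\P'}$-module, $\ker_n M \neq 0$ for $n$ large, and Engel's theorem applied to the solvable action of $\Z/\P'_n$ on $\ker_n M$ (cf.\ \prref{lposplusi}) produces a nonzero singular vector, so $\sing M = M \cap (\kk \otimes R) \neq 0$. The $\N'$-action on $\kk \otimes R$ factors through the irreducible $\dd_+ \oplus \spdd$-action together with central scalars from $(\m/\m^2) \otimes Z(\P)$ that do not affect irreducibility; hence $\sing M$ exhausts $\kk \otimes R \subseteq M$, and since the $H'$-submodule generated by $\kk \otimes R$ in $V' = H' \otimes R$ is the whole of $\VHtR$, we conclude $M = \VHtR$. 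The main obstacle I expect is the careful combinatorial tracking of the left-straightening in the reverse inclusion: specifically verifying that the new $t$-contribution at first-slot degree $|K|+1$ does not combine with the additional permitted directions $\sum_{k=1}^r \kk \d_k$ from \prref{singaction} in a way that obstructs the vanishing conclusion $m_K = 0$ for $|K| \geq 1$, and handling the trivial-$\spdd$ case where the leading $\overline{\d_i\d_j}$-coefficient degenerates.
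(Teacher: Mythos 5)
Your architecture (first kill the components of $u$ supported on the $\dd'\setminus\dd$ directions, then dispose of the residual element of $1\otimes_H \VH(R)$) is genuinely different from the paper's, and its second half has a real gap. The paper right-straightens $e*_t u$, chooses a PBW basis of $\dd'$ beginning with $\d_1=t$, and observes that for $L$ chosen maximal (first in $|L|$, then in $l_1$) the coefficient of $1\otimes t\d^{(L)}$ is exactly $1\otimes u_L$; since $t\d^{(L)}$ lies outside $\fil^2 H+\sum_i \kk\,\d_i$ for every $L\neq 0$, Proposition \ref{singaction} forces $u_L=0$ for \emph{all} $L\neq 0$ at once, with no representation-theoretic input whatsoever. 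Your route instead terminates by citing ``$\sing\VH(R)=1\otimes R$'' from \cite{BDK3}, and that statement is false precisely in the cases that matter: when $U$ is a fundamental representation of $\spdd$ (or $U$ is trivial with $\rho(c)=0$), the tensor module $\VH(R)$ is reducible and carries non-constant singular vectors --- these are exactly what produce the morphisms of the type-H complexes. For such $R$ your argument leaves open the possibility of non-constant singular vectors in $\VHtR$, hence fails to prove irreducibility exactly where the theorem has new content, namely that the $t$-deformation turns reducible current tensor modules into irreducible modules. The repair is to use the $t$-term on the residual part as well: for $u=1\otimes_H m_0$ with $m_0=\sum_J \d^{(J)}\otimes v_J$ supported on $\dd$, the first-slot coefficient of $t\,S(\d^{(J)})$ at top $|J|$ is $\pm v_J$ alone, since no other summand of \eqref{modifiedaction} produces a $t$ in the first slot; this kills $v_J$ for all $J\neq 0$ irrespective of $R$, which is the same mechanism the paper exploits globally.

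A secondary weakness lies in the intermediate step: the extraction of $\rho(f^{ij})v_K=0$ from the top first-slot degree is not as clean as stated, because distinct triples $(i,j,J)$ with $\epsilon_i+\epsilon_j+J$ equal as multisets contribute to the same PBW monomial, so at leading order one only obtains the symbol-type linear system familiar from the singular-vector computations of \cite{BDK1,BDK3}, not the separate vanishing of each $\rho(f^{ij})v_{K,J}$; and you need an ad hoc argument when $U$ is trivial. Both complications evaporate if you run the $t$-coefficient argument uniformly over all of $\dd'$, which is the paper's route and the one you should adopt. Your easy inclusion and your concluding irreducibility argument (a nonzero submodule of a finite module meets $\sing V'$ nontrivially, and $\kk\otimes R$ is $\N'$-irreducible and generates $H'\otimes R$ over $H'$) are fine.
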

\begin{proof}
The representation $\VHtR = H' \otimes R$ of $L' =\Cur_H^{H'}\Hd$ is defined by the Lie pseudoalgebra action
\begin{align}\label{tvsaction}
e *_t (1 \otimes v) & = \sum_{ij} (\overline{\d_i \d_j} \otimes 1) \otimes_{H'} (1 \otimes f^{ij}.v)\\
& - \sum_k (\overline{\d_k} \otimes 1) \otimes_{H'} (1 \otimes (\d^k + \ad^\sp \d^k).v - \d^k \otimes v)\nonumber\\
& + (1 \otimes 1) \otimes_H (1 \otimes c.v) + (t \otimes 1) \otimes_{H'} (1 \otimes v).\nonumber
\end{align}
Expression \eqref{tvsaction} may be right-straightened to
\begin{align}
e *_t (1 \otimes v) & = \sum_{i,j} (1 \otimes \d_i \d_j) \otimes_{H'} (1 \otimes f^{ij}.v)\\
& + \sum_k (1 \otimes \d_k) \otimes_{H'} (1 \otimes (\d^k + \ad^\sp(\d^k)).v - \d^k \otimes v)\nonumber\\
& - (1 \otimes t) \otimes_{H'} (1 \otimes v)\nonumber\\
& + \mbox{ terms in } (\kk \otimes \fil^1 H) \otimes_{H'} (\fil^1 H' \otimes V).\nonumber
\end{align}
Now pick a basis $t = \d_1, \dots, \d_{N+r}$ of $\dd'$ so that $\d_{r+1}, \dots, \d_{N+r}$ is a basis of $\dd$. Use the corresponding PBW basis to express
\begin{equation}\label{pbwu}
0 \neq u = \sum_{L \in \ZZ^{N+r}} \d^{(L)} \otimes u_L.
\end{equation}
Plugging \eqref{pbwu} into $e *_t u$ yields a right-straightened expression. If $n$ is the maximal value of $|L|$ such that $u_L \neq 0$, choose among all such $L = (l_1, l_2, \dots, l_{N+r})$ with $|L| = n$ one with the highest value of $l_1$. Then the term multiplying $1 \otimes t\d^{(L)}$ in $e *_t u$ equals $1 \otimes u_L$. 
If $u$ is singular and $L \neq 0$, this must vanish by Proposition \ref{singaction}, leading to a contradiction. We conclude that  $\sing \V_{t, \dd'} = \kk \otimes R$. The remaining claim follows easily.
\end{proof}

\section{Classification of irreducible modules over non primitive simple Lie pseudoalgebras}

We summarize all previous results in the following theorem
\begin{theorem}\label{main}
Let $\dd \subset \dd'$ be finite-dimensional Lie algebras, $H \subset H'$ their universal enveloping algebras endowed with the canonical cocommutative Hopf algebra structure. The following is a complete list of finite irreducible representations of the current Lie pseudoalgebra $L' = \Cur_H^{H'} L$, where $L$ is a primitive Lie pseudoalgebra:
\begin{itemize}
\item[---] $\Cur_H^{H'} V$, where $V$ is a finite irreducible $L$-module;
\item[---] $\VHtR$, where $L = \Hd$,  $R$ is a finite-dimensional irreducible representation of $\dd_+ \oplus \spdd$, and $t\in \dd' \setminus \dd$ satisfies 
\begin{itemize}
\item[{\em (i)}] $\ad_\chi t$ preserves $\dd$ and lies in $\spdd$;
\item[{\em (ii)}] $[s, t] = 0,$ where $s$ satisfies $\chi = \iota_s \omega$.
\end{itemize}
\end{itemize}
The only nontrivial isomorphisms between the above irreducible modules are those described in Theorem \ref{nontrivialiso}.
\end{theorem}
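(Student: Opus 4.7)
The plan is to assemble the pieces already developed in Sections \ref{notH}, \ref{shtype} and the singular vector computations of the preceding section, and then discuss the isomorphism classification separately.

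First, I would dispose of the case $L \neq \Hd$ by a direct appeal to Corollary \ref{irrcurriscurrirr}: every finite irreducible $L'$-module is of the form $\Cur_H^{H'} V$ for some finite irreducible $L$-module $V$, and the finite irreducible $L$-modules are exactly those classified in \cite{BDK1, BDK2, BDK3}. No exceptional family appears here because the reductive Lie algebra $\L^0$ has no $Z(\P)$-style companion, as reflected in Proposition \ref{onlytrivial} and Lemma \ref{trivialL}.

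Next, for $L = \Hd$, the classification follows from the main theorem of Section \ref{shtype}, which produces, for every finite irreducible $V'$, a finite-dimensional irreducible $\dd_+ \oplus \spdd$-module $R$ and an element $t \in \dd'$ (with either $t = 0$ or $t \in \dd' \setminus \dd$ satisfying the conditions of Proposition \ref{twisted}) such that $V'$ is a quotient of $\VHtR$. I would split into two subcases. If $t = 0$, then by construction $\VHtR \simeq \Cur_H^{H'} \VH(R)$, and any maximal $L'$-submodule yielding $V'$ must be of the form $\Cur_H^{H'} M_0$ with $M_0 \subset \VH(R)$ a maximal $L$-submodule, by Corollary \ref{maxstructure}; thus $V' \simeq \Cur_H^{H'}(\VH(R)/M_0)$ lies in the first family. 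If instead $t \in \dd' \setminus \dd$, then the singular vector computation of the previous section shows $\sing \VHtR = \kk \otimes R$, so $\VHtR$ is already irreducible when $R$ is irreducible; consequently $V' \simeq \VHtR$ itself, placing it in the second family.

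To finish, I would argue that both families are genuinely disjoint and that the only nontrivial isomorphisms are those of Theorem \ref{nontrivialiso}. For disjointness, one observes that in a current module $\Cur_H^{H'} V$ the space $1 \otimes_H V$ is stable under the action of the subalgebra $\L \subset \L'$, whereas in $\VHtR$ with $t \notin \dd$ the extra term $(t \otimes 1) \otimes_{H'}(1 \otimes v)$ forces the $H$-submodule of coefficients, as in Lemma \ref{coeffM}, to genuinely involve $\dd' \setminus \dd$; alternatively, one can compare the images under $\iota^* \otimes_H \id_L$ of the Fourier coefficients giving the $\P'$-action on $\sing V'$. For the isomorphism classification, having normalized the representative to $\sing V'$ as a $\dd_+ \oplus \spdd$-module, one sees that the only remaining ambiguity in $t$ is by an element $\delta \in \dd$ with $\iota_\delta \omega$ a traceform and $\chi(\delta) = 0$, which is precisely the setup of Lemma \ref{equivalent} and Theorem \ref{nontrivialiso}.

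The main obstacle I expect is the careful bookkeeping in the $L = \Hd$ case when $t = 0$: one must verify that the maximal submodule structure of $\Cur_H^{H'} \VH(R)$ faithfully mirrors that of $\VH(R)$ so that the non-irreducible tensor modules listed in Section \ref{hdtm} produce, via the current functor, exactly the expected irreducible quotients and no spurious ones. This relies on exactness of $\Cur_H^{H'}$ combined with Corollary \ref{maxstructure} to rule out the possibility that a maximal submodule of the current module could project $L$-nontrivially onto the whole of $\VH(R)$; the $\rho(c) \neq 0$ reducibility, which decomposes as a direct sum of two irreducibles rather than admitting a unique maximal submodule, requires separate attention, but is handled identically at the level of currents since the decomposition is $H$-linear and hence $H'$-linear after extension.
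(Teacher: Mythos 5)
Your proposal is correct and follows essentially the same route as the paper: completeness is delegated to Corollary \ref{irrcurriscurrirr} and to the results of Sections \ref{shtype} and the singular-vector computations, and the isomorphism classification is read off from $\sing V'$ as an $\N'$-module. One caution: your primary disjointness argument (stability of $1 \otimes_H V$, or the coefficient submodule of Lemma \ref{coeffM}, "involving $\dd'\setminus\dd$") is not an invariant of the abstract $L'$-module under isomorphism; the paper instead uses exactly the argument you offer as an alternative, namely that $\m\,\widehat{\otimes}\,Z(\P)$ acts trivially on singular vectors of a current module but by a nonzero scalar character on $\VHtR$, and that this character determines the class $[t]\in\dd'/\dd$ --- which is also the missing justification for your assertion that the "only remaining ambiguity in $t$" is by some $\delta = t'-t$ lying in $\dd$, after which Lemma \ref{equivalent} and Theorem \ref{nontrivialiso} apply as you say.
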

\begin{proof}
We are only left with proving that representations in the list are pairwise non-isomorphic. This is done by comparing the $\N'$-actions on the space of singular vectors contained in each representation.

First of all, $\sing \Cur_H^{H'} V = \sing V$, and irreducible representations of a simple primitive Lie pseudoalgebra are all told apart by their singular vectors, viewed as a $\dd \oplus \L^0$-module. This takes care of the cases $L = \Wd, \Sd, \Kd$.

When $L = \Hd$, no current irreducible representation is isomorphic to a  non-current one as the action of central elements lying in $\m \,{\widehat \otimes}\, Z(\L)$ is trivial in the former case and nontrivial in the latter. Furthermore, any isomorphism $\VHt(R_1) \simeq \VHt(R_2)$ induces an isomorphism of the corresponding singular vectors, viewed as a $\dd_+ \oplus \spdd$-module. However, singular vectors are all constant and lie in a single $\dd_+ \oplus \spdd$-component, which is isomorphic to $R_1, R_2,$ respectively. Thus, {\bf for equal values of $t$}, the $\Cur_H^{H'} \Hd$-modules $\VHt(R_1), \VHt(R_2)$ are isomorphic precisely when $R_1, R_2$ are isomorphic representations of $\dd_+ \oplus \spdd$.

Finally recall that the action on $\VHtR$ of central elements lying in $\m \,{\widehat \otimes}\, Z(\L)$ is via multiplication by scalars, and defines a linear functional $\m/\m^2 \to \kk$. However, $\m/\m^2 \simeq (\dd'/\dd)^*$ and this linear functional corresponds to a unique class in the vector space quotient $\dd'/\dd$, which coincides with $[t]$. This shows that if $\VHtR$ and $\VHtprime(R')$ are isomorphic, then elements $t, t'$ project to the same class of the quotient $\dd'/\dd$. Then their difference $\delta = t' - t\in \dd$ satisfies the conditions in Lemma \ref{equivalent}, so that Theorem \ref{nontrivialiso} yields
$\VHtprime(R') = \VHt(R' \otimes (\kk_{\pi^*(\iota_{\delta}\omega)} \boxtimes \kk))$, and one falls back to the previous case.
\end{proof}


\bibliographystyle{amsalpha}

\begin{thebibliography}{BDK1}
\bibitem[BDK]{BDK}
B.~Bakalov, A.~D'Andrea, and V.~G.~Kac,
{\textit{Theory of finite pseudoalgebras}},
Adv. Math. {\bf 162} (2001), 1--140.

\bibitem[BDK1]{BDK1}
B.~Bakalov, A.~D'Andrea, and V.~G.~Kac,
{\textit{Irreducible modules over finite simple Lie pseudoalgebras I.
Primitive pseudoalgebras of type $W$ and $S$}},
Adv. Math. {\bf 204} (2006), 278--346.

\bibitem[BDK2]{BDK2}
B.~Bakalov, A.~D'Andrea, and V.~G.~Kac,
{\textit{Irreducible modules over finite simple Lie pseudoalgebras II.
Primitive pseudoalgebras of type $K$}},
Adv. Math. {\bf 232} (2013), 188--237.

\bibitem[BDK3]{BDK3}
B.~Bakalov, A.~D'Andrea, and V.~G.~Kac,
{\textit{Irreducible modules over finite simple Lie pseudoalgebras III.
Primitive pseudoalgebras of type $H$}},
arXiv:2001.04104.






\bibitem[CK]{CK}
S.-J.~Cheng and V.~G.~Kac,
{\textit{Conformal modules}},
Asian J. Math. {\bf 1} (1997), no. 1, 181--193,
{\textit{Erratum}}, Asian J. Math. {\bf 2} (1998), no. 1, 153--156.




\bibitem[C]{Cartan} \'E.~Cartan, {\textit{Les groupes de transformation continus, infinis, simples}}, Ann. Sci. ENS {\bf 26} (1909), 93--161.

\bibitem[DK]{DK}
A.~D'Andrea and V.~G.~Kac, {\textit{Structure theory of finite
conformal algebras}},
Selecta Math. (N.S.) {\bf 4} (1998), no. 3, 377--418.


\bibitem[E]{E} M.~Eastwood, {\textit{Extensions of the coeffective complex}},
Illinois J. Math. {\bf 57} (2013), no. 2, 373--381.






\bibitem[K]{K}
V.~G.~Kac,
{\textit{Vertex algebras for beginners}},
University Lecture Series, {\bf 10},
American Math. Society, Providence, RI,
1996. 2nd edition, 1998.



\bibitem[Ko]{Ko}
I.~A.~Kostrikin,
{\textit{Representations of height $1$ of infinite-dimensional Lie algebras of the series $K_n$}},
(Russian) Uspekhi Mat. Nauk {\bf 34} (1979), no. 1(205), 229--230.
English translation in Russ. Math. Surv. {\bf 34} (1979), no. 1, 225--226.








\bibitem[R1]{R1}
A.~N.~Rudakov,
{\em Irreducible representations of infinite-dimensional Lie algebras
of Cartan type}, (Russian)
Izv. Akad. Nauk SSSR Ser. Mat. {\bf 38} (1974), 835--866.
English transl. in Math. USSR Izv. {\bf 8} (1974) 836--866.

\bibitem[R2]{R2}
A.~N.~Rudakov,
{\em Irreducible representations of infinite-dimensional Lie algebras of
types S and H}, (Russian)
Izv. Akad. Nauk SSSR Ser. Mat. {\bf 39} (1975), 496--511.
English translation in Math. USSR Izv. {\bf 9} (1976) 465--480.

\bibitem[Ru]{Ru}
M.~Rumin,
\textit{Formes diff\'erentielles sur les vari\'et\'es de contact},
J. Differ. Geom. \textbf{39} (1994), 281--330.




\end{thebibliography}



\end{document}